\newenvironment{draft}{\color{red}}{}
\newtheorem{thm}{Theorem}[section]
\newtheorem{cor}[thm]{Corollary}
\newtheorem{lem}[thm]{Lemma}
\newtheorem{prop}[thm]{Proposition}
\theoremstyle{definition}
\newtheorem{defn}[thm]{Definition}
\newtheorem{rem}[thm]{Remark}
\newtheorem{ex}[thm]{Example}
\newcommand{\N}{{\mathbb N}}
\newcommand{\R}{{\mathbb R}}
\newcommand{\T}{{\mathbb T}}
\newcommand{\conv}{\mathop{\mathrm{co}}}
\newcommand{\dom}{\mathop{\mathrm{dom}}}
\newcommand{\Lip}{\mathop{\mathrm{Lip}}}
\def\bc{\begin{center}}       \def\ec{\end{center}}
\def\be{\begin{equation}}     \def\ee{\end{equation}}
\def\ba{\begin{array}}        \def\ea{\end{array}}
\def\bea{\begin{eqnarray}}            \def\eea{\end{eqnarray}}
\def\beaa{\begin{eqnarray*}}  \def\eeaa{\end{eqnarray*}}
\def\bd{\begin{draft} }      \def\ed{\end{draft}}
\def\p{\partial}
\def\F1{\mathcal{F}}
\def\sig{\sigma}
\def\ep{\epsilon}
\def\x{x_0}
\def\y{y_0}
\def\sig{\sigma}
\def\p{\partial}
\def\l{\langle}
\def\r{\rangle}
\def\b{\bar{R}}
\def\bx{\bar{x}}
\def\ep{\epsilon}
\def\vp{\varphi}
\begin{document}
\title[Viscosity solution of Hamilton-Jacobi equation by a minimax method]{Viscosity solution of Hamilton-Jacobi equation by a limiting minimax method}
\author{Qiaoling WEI}
\address{WEI Qiaoling, Universit\'{e} Paris 7, UFR de Math\'ematiques, B\^atiment Sophie Germain
5, rue Thomas Mann, 75205 Paris Cedex 13
FRANCE} 

\ead{weiqiaoling@math.jussieu.fr}


{ 
\begin{abstract}

For non convex Hamiltonians, the viscosity solution and the more geometric minimax solution of the Hamilton-Jacobi equation do not coincide in general.
They are nevertheless related: we show that iterating the minimax procedure during shorter and shorter time intervals one recovers
the viscosity solution.

\end{abstract}

\noindent{\it Keywords\/}: Hamilton-Jacobi equation, viscosity solution, minmax solution, geometric solution, generating family
\ams{35D40,35F21,37J05,49L25}
\maketitle

\section{Introduction}
Consider the Cauchy problem for the Hamilton-Jacobi equation
\[\mbox{(H-J)}\quad \quad\quad \left\{
    \begin{array}{ll}
      \p_t u(t,x)+ H(t,x, \p_x u) = 0,  \\
      u(0,x)= v(x)
    \end{array}
  \right.
  \]
By the classical characteristics method (ref. section \ref{charat}), one can to some extent solve this problem  when $v$ is $C^2$: solving Hamilton's equations $\dot x=\p_pH(t,x,p)$ and $\dot p=-\p_xH(t,x,p)$, one gets the characteristic lines $\big(t,x(t),p(t)\big)$\/, from which the solution $u(t,x)$ of the Cauchy problem can be obtained as follows: setting $u_t(x)=u(t,x)$\/, the graph of  $du_t$ is the section at time $t$ of the union $L$ of the  characteristic lines passing through $\big\{\big(0,dv(x)\big)\}$\/, whereas $\p_t u$ is provided by the equation. 

When $H$ is not linear with respect to $p$\/, this procedure does not yield a global solution of the problem in general, as the geometric solution $L$ is not the graph of a function $p(t,x)$\/. In other words the \emph{wavefront} $\mathcal{F}$\/, obtained in $(t,x,u)$ space by solving the equation $du=-H(t,x,p)\,dt+p\,dx$ restricted to $(t,x,p)\in L$\/---and taking into account the values of $v$ and not only $dv$---is not the graph of a function (which otherwise would be a solution of the problem): the projected characteristics $\big(t,x(t)\big)$ may have a variation of slopes and cross after some time. 

In some applications, e.g. to geometrical optics, the wavefront $\mathcal{F}$ provides a solution of the physical problem at hand. In other cases, however, one does look for a single-valued solution $u(t,x)$\/, which can \emph{not} be $C^2$ since, otherwise, its graph would be $F$\/. Therefore, some new idea is needed to obtain a weak solution in the sense of distributions.

The most natural idea, assuming that the projection of $F$ into $(t,x)$ space is onto, is to get such a solution as a section of the wavefront, obtained by selecting a single $u$ over each $(t,x)$. When the Hamiltonian is convex enough with respect to $p$ (and $v$ is not too wild at infinity), such a ``graph selector'' consists in choosing the lowest point of $\mathcal{F}$ above each $(t,x)$\/, i.e. taking for $u(t,x)$ the smallest $u$ with $(t,x,u)\in \mathcal{F}$\/. Analytically, this is expressed by  formulae due to Hopf-Lax \cite{Hopf} for Hamiltonians of the form $H(p)$ and to Lax-Oleinik in the general case \cite{Fathi}; these formulae, which extend to non-differentiable initial data $v$, are widely used in  weak KAM theory; they imply that the min selector provides a Lipschitz weak solution of the problem.

This min solution turns out to be the \emph{viscosity solution}, which owes its name to the fact that it was first introduced as the limit when $\varepsilon\to 0^+$ of the solution of the same Cauchy problem for the equation $\p_t u(t,x)+ H(t,x, \p_x u) =\varepsilon\Delta_xu(t,x)$\/; the much more tractable general definition\footnote{For conservation laws, more particular characterizations called entropy conditions had been obtained by Oleinik  in dimension one \cite{Lax} and by Kruzkov in arbitrary dimensions \cite{Kru}.} was given in 1983 by M.G.~Crandall, L.C.~Evans and P.L.~Lions for general nonlinear first order partial differential equations \cite{PL,GL}. 

In the non-convex case, it has been known for quite a long time that the viscosity solution, when it exists, may \emph{not} be a section of the wavefront (see for example \cite{Alain}). On the other hand, following a remark of J.-C. Sikorav, M.~Chaperon introduced in 1991 \cite{loi} weak solutions whose graph is a section of the wavefront, obtained by a ``minmax'' procedure which generalises the minimum considered in the convex case and relies on the existence of suitable generating families for the geometric solution---in addition to \cite{Alain}, see \cite{Vi2,AC,Cardin} for examples showing that  minmax and viscosity solutions may differ.

\bigskip
The aim of the present paper is to relate such minmax solutions to the viscosity solutions, which possess very general existence, uniqueness and stability properties in a large class of ``good'' cases but are not so easy to understand geometrically in general, one of the motivations for this work.

Among the  properties of viscosity solutions \emph{not} shared by minmax solutions---see \ref{rarefaction}---is the ``semigroup'' (Markov) property. Hence, it is natural to try and get this semigroup as a limit of (discrete) semigroups obtained by dividing a given time interval into small pieces and taking the minimax step by step (``iterated minimax procedure''). Our main purpose is to show that when the small time intervals go to zero, one  gets a limit solution which is indeed the viscosity solution (see Theorem \ref{main}). This answers a question of M.~Chaperon. 

\ack I am grateful to Marc Chaperon, who introduced me to this subject; I owe  him the conjecture of iterated minmax and many useful discussions. I also thank Alain Chenciner for his help and discussions during the work. And I wish to thank  Patrick Bernard for motivating the final proof of the main theorem, which simplifies the original one. After the first version of this article was posted on arXiv.org, Professor P.E.~Souganidis kindly informed the author about his article \cite{Soug} on the approximation schemes to viscosity solutions. It turns out that the minmax selector behaves like a ``generator'' defined in \cite{Soug}, and our limiting iterated minmax procedure fits into his general approximation schemes. 

 
\section{Generating families and minmax selector}
%
Consider the cotangent bundle $T^*M$ of a connected manifold $M$ of dimension $m$, endowed with its canonical symplectic form $\omega=dp\wedge dq$. A Lagrangian submanifold $L$ is a submanifold of dimension $m$ such that $\omega$ vanishes on $L$.

\begin{defn}
A \emph{generating family} for a Lagrangian submanifold $L\subset T^*M$ is
a $C^2$ function $S: M\times \R^k\to \R$ such that  $0$ is a regular value of the map $(x,\eta)\mapsto \p S(x,\eta)/\p \eta$ and
\[
L=\Big\{\big(x,\frac{\p S}{\p x}(x,\eta)\big):\frac{\p S}{\p \eta}=0\Big\}\/;
\]
more precisely, the condition that $0$ is a regular value implies
that the {\it critical locus} $\Sigma_S:=\{(x,\eta)|\p_{\eta}S=0\}$
is a submanifold and that the map
\[
i_S:\Sigma_S\to T^*M,\quad (x,\eta)\mapsto (x,\p_{x}S(x,\eta))
\]
is an immersion; we require that $i_S$ be an embedding and, of course, $i_S(\Sigma_S)=L$.
\end{defn}

A function $S$ on $M\times \R^k$ need not have  critical points.
However, it does have critical points if we prescribe some behavior
at infinity as in the following definition:

\begin{defn}\label{gfc}
A generating family $S: M\times \R^k\to \R$ of $L$ is \emph{quadratic at infinity} if there exists a \emph{nondegenerate} quadratic form $Q$ such that, for any compact subset $K\subset M$, the differential $|\p_{\eta}(S(x,\eta)- Q(\eta))|$ is bounded on $K\times \R^k$.
\end{defn}
We will abbreviate ``generating family quadratic at infinity'' by G.F.Q.I.. Remark that, the $C^2$ G.F.Q.I. such defined can be made into ``exactly quadratic at infinity'': for every compact subset $K\subset M$, $(S-Q)|_{K\times \R^k}$ has compact support modulo a fiberwise diffeomorphism. See for example\cite{theret2}, or \cite{these} Proposition 1.21. 

%
%
%
%
%
 
Consider the sub-level sets $ S_{x}^a:=\{\eta: S(x,\eta)\leq a\},$
 the homotopy type of $(S_{x}^a,S_{x}^{-a})$ does not depend on $a$ and coincides with the homotopy type of $(Q^a,Q^{-a})$ when $a$ is large enough, we may write it as
 $(S_{x}^{\infty},S_{x}^{-\infty})$. If the Morse index of $Q$ is
 $k_{\infty}$, then
 \[
 H_i (S_{x}^{\infty},S_{x}^{-\infty};\mathbb{Z}_2)=
 H_{i}(Q^{\infty},Q^{-\infty};\mathbb{Z}_2)\simeq
                   \left\{
                                        \begin{array}{ll}
                                          \mathbb{Z}_2, & i=k_{\infty} \\
                                          0, & \mbox{otherwise}
                                        \end{array}
                                      \right.
 \]
 \begin{defn} The {\it minmax} function is defined as
 \[
 R_S(x):= \inf_{[\sigma]=A}\max_{\eta \in |\sigma|} S(x,\eta)\/,
 \] 
 where $A$ is a generator of the homology group $H_{k_{\infty}}(S_{x}^{\infty},S_{x}^{-\infty};\mathbb{Z}_2)$ and $|\sigma|$ denotes the image of the relative singular homology cycle $\sigma$\/. Such a cycle $\sigma$ will be called a \emph{descending cycle.}
 
 \emph{The function $R_S$ is determined} (up to the addition of a constant) \emph{by $L$ and does not depend on the particular choice of its G.F.Q.I.  $S$\/}: indeed, by a  theorem of Viterbo and Th\'eret \cite{theret}, for a given
 Lagrangian submanifold $L$, $S$ is  unique up to the following three operations:
 
 (a) Fiberwise diffeomorphism :
 $\tilde{S}(x,\eta):=S(x,\varphi(x,\eta))$, where $(x,\eta)\mapsto
 (x,\varphi(x,\eta))$ is a fiberwise diffeomorphism.
 
 (b) Addition of a constant: $\tilde{S}(x,\eta):=S(x,\eta)+C$.
 
 (c) Stabilization: $\tilde{S}(x,\eta,\xi):= S(x,\eta)+ q(\xi)$,
 where $q$ is a nondegenerate quadratic form.
 \end{defn}
 It is well known that the minmax $R_S(x)$ is a critical value of $S(x,\cdot)$ for every $x$\/,\footnote{Hence, the graph of $R_S$ is a section of the ``wavefront'' obtained in $(x,u)$-space (up to vertical translation) by solving the equation $du=p\,dx$ restricted to $L$\/.} a property which we shall establish in a  more general Lipschitz framework (Propostion~\ref{propun44}). 
 


%
%
%

 In the rest of the paper, we will take the manifold $M$ to be
 $\R^d$, in which case the generating families are constructed
 explicitly. The case where $M=\T^d$ can be treated via the same construction. For a general manifold, one can embed it into some
 $\R^d$ and use the trick of Chekanov \cite{Chek,Bru} to get generating families
 from those in $\R^d$\/.
 
 The contents of this section are organized as follows: first, we present an explicit formula for generating families, due to M.~Chaperon and J.-C.~Sikorav; then the notion of ``quadratic at infinity'' is discussed; finally, we will arrive at a generalization of the G.F.Q.I.'s to Lipschitz cases.
 
 \subsection{Construction of generating functions and families}
 In the following, we equip $\R^k$ with the Euclidien $\ell^2$ norm $|\cdot|$, and matrices in $\R^k$ with the associated operator norm. We denote by $\Lip(f)$ the Lipschitz constant of a function $f$ and by $\pi:T^*\R^d\rightarrow\R^d$ the canonical projection $\pi(x,y)=x$.
 
 We denote by $H:[0,T]\times T^*\R^d\rightarrow\R$  a $C^2$ Hamiltonian satisfying
 \be
 \label{equnun}
 c_H:=\sup|D^2H_t(x,y)|<\infty
 \ee
 and by $X_{H_t}$ the associated time-depending Hamiltonian vector field\footnote{We use the convention of sign that $X_H=(\p_pH,-\p_q H)$.}. By the general theory of  differential equations, as $c_H=\max_t\Lip(DH_t)=\max_t\Lip(X_{H_t})$\/, the Hamiltonian transformation $\vp_H^{s,t}$  obtained by integrating $X_{H_\tau}$ from $\tau=s$ to $\tau=t$ is a well-defined diffeomorphism for all $(s,t)\in[0,T]$. For simplicity, we  sometimes write $\vp_s^t=(X_s^t,Y_s^t):=\vp_H^{s,t}$ without mentioning $H$.
 
 We will be mostly interested in the special case where $H$ has compact support, and consider the Lagrangian submanifolds of $T^*\R^d$ which are Hamiltonianly isotopic to the zero section:
 \[
 \mathcal{L}:=\{L=\vp(dv),\quad v\in C^2\cap C^{\Lip}(\R^d),\,\vp\in
 Ham_c(T^*\R^d)\}\/;
 \]
 here $C^{\Lip}(\R^d)$ denotes the space of globally Lipschitz functions and
\[\eqalign{
 dv:=\{(x,dv(x)),\,x\in \R^d\}\subset T^*\R^d\\
 Ham_c(T^*\R^d)=\{\vp=\vp_H,\, H\in C_c^2([0,1]\times
 T^*\R^d)\}}
\]
 where $\vp_H=\vp_H^{0,1}$ is the endpoint of the isotopy (``Hamiltonian flow'') defined by $H$.
 
 \begin{defn}
 A diffeomorphism $\vp :T^*\R^d\to T^*\R^d$ admits a {\it generating function} $\phi$, if $\phi: T^*\R^d\to \R$ is of class $C^2$, such that $((x,y),(X,Y))\in \hbox{Graph}(\vp)$ if and only if
 \[
\left\{
\begin{array}{ll}
x=X+\p_y \phi(X,y),\\
Y=y+\p_X \phi (X,y).\end{array}
\right.
 \]
 \end{defn}
 
 This definition can be interpreted as follows:  the isomorphism

\[ 
\eqalign{ I:\quad T^*\R^d\times T^*\R^d\to T^*(T^*\R^d)\\
        (x,y,X,Y)\mapsto (X,y,Y-y,x-X)}
        \]
is symplectic if $T^*\R^d$ is equipped with the standard symplectic form
 $\omega=dx\wedge dy$ and $T^*\R^d\times T^*\R^d$ with the symplectic form $(-\omega)\oplus\omega=dX\wedge dY-dx\wedge dy$\/; this symplectic isomorphism $I$ maps the diagonal of the space $T^*\R^d\times T^*\R^d$ to the zero section of the cotangent space
 $T^*(T^*\R^d)$ and $\hbox{Graph}(\vp)$ to $\hbox{Graph}(d\phi)$.
 
 Hence, if it exists, the generating function $\phi$ is unique up to the addition of a constant.
 
  
   
 \begin{lem}[\cite{FG}]
  \label{1.1}
If $\delta_H:= c_H^{-1}\ln2$, then, for  $|s-t|<\delta_H$\/, the map
    \[
    \alpha_s^t: (x,y) \mapsto \big(X_s^t(x,y),y\big)
    \]
    is a diffeomorphism. As a consequence, $\vp_s^t$ admits the generating function
 \be\label{gfd}
 \phi_s^t(X,y) = \int_s^t \Big((Y_s^{\tau}-y)\dot{X_s^{\tau}}-H(\tau,X_s^{\tau},Y_s^{\tau})\Big)d\tau
 \ee  where $(X_s^{\tau}(X,y),Y_s^{\tau}(X,y))=\vp_s^{\tau}\circ(\alpha_s^t)^{-1}(X,y)$ and the dot denotes the derivative with respect to $\tau$.
 \end{lem}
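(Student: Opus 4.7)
The plan is to prove the two assertions in sequence: first that $\alpha_s^t$ is a diffeomorphism, then that the stated integral formula defines a generating function for $\vp_s^t$.

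For the diffeomorphism claim, the strategy is a Gronwall argument on the variational equation. Writing $\Phi(\tau) = D\vp_s^\tau$, we have $\dot\Phi = DX_{H_\tau}(\vp_s^\tau)\,\Phi$ with $\Phi(s) = I$, and the assumption $c_H = \sup|D^2H_t| < \infty$ gives $|DX_{H_\tau}| \leq c_H$. Gronwall then yields $|\Phi(\tau)| \leq e^{c_H|\tau-s|}$ and hence
\[
|\Phi(t) - I| \leq \int_s^t |DX_{H_\tau}|\,|\Phi(\tau)|\,d\tau \leq e^{c_H|t-s|} - 1.
\]
For $|t-s| < \delta_H = c_H^{-1}\ln 2$ this is strictly less than $1$; inspecting the upper-left block of $\Phi(t)$, this gives $|\p_x X_s^t - I_d| < 1$ everywhere. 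Thus, for fixed $y$, the map $x \mapsto X_s^t(x,y) = x + f(x,y)$ is a $C^1$ perturbation of the identity whose $x$-derivative is everywhere invertible and whose remainder $f$ is Lipschitz of constant strictly less than $1$ in $x$. A standard global inverse argument (e.g.\ the Banach fixed-point theorem applied to $x \mapsto X - f(x,y)$) promotes this to a $C^1$ diffeomorphism of $\R^d$, and so $\alpha_s^t$ is a diffeomorphism of $T^*\R^d$.

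For the generating function claim, I would verify the defining relations $\p_y \phi_s^t = x - X$ and $\p_X \phi_s^t = Y - y$ directly, where $(X,Y) = \vp_s^t(x,y)$ and $x = (\alpha_s^t)^{-1}(X,y)$. Denoting $X(\tau) = X_s^\tau(x,y)$ and $Y(\tau) = Y_s^\tau(x,y)$, viewed now as functions of $(X,y)$, I would differentiate under the integral sign and substitute Hamilton's equations $\dot X = \p_p H$, $\dot Y = -\p_q H$. A short computation shows the integrand for $\p/\p X^i$ is a total $\tau$-derivative:
\[
\frac{\p}{\p X^i}\bigl[(Y - y)\dot X - H\bigr] = \frac{d}{d\tau}\Bigl[(Y - y)\cdot \tfrac{\p X}{\p X^i}\Bigr],
\]
and for $\p/\p y^i$ the same identity holds up to an extra $-e_i\cdot \dot X$ coming from the explicit $y$ in $(Y-y)\dot X$. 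Integrating from $s$ to $t$ reduces everything to boundary data: at $\tau = s$ we have $(X(s),Y(s)) = (x,y)$, so the factor $Y - y$ vanishes; at $\tau = t$ the constraint $X(t) = X$ gives $\p X(t)/\p X^i = \delta_i^{\,j}$ and $\p X(t)/\p y^i = 0$. Collecting terms yields $\p_X \phi_s^t = Y - y$ and $\p_y \phi_s^t = -(X-x) = x - X$, which are the defining relations.

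The main obstacle I anticipate is the careful bookkeeping in the second step: the implicit dependence of the trajectory on $(X,y)$ through the constraint $X_s^t(x,y) = X$ makes the differentiation under the integral sign delicate, and one must invoke Hamilton's equations in just the right place to cancel the interior contributions and leave only the boundary terms. The Gronwall estimate and the global inverse step are routine once the Lipschitz constant of $f$ is controlled.
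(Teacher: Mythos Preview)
The paper does not actually prove this lemma: it is stated with a citation to \cite{FG} and no proof is given in the text. Your argument is correct and follows the standard route one finds in the cited literature. The Gronwall estimate $|D\vp_s^t-I|\le e^{c_H|t-s|}-1$ is exactly the bound that produces the threshold $\delta_H=c_H^{-1}\ln2$, and passing to the upper-left block (or, equivalently, noting that $\alpha_s^t-\mathrm{Id}$ has Lipschitz constant $<1$) is the right way to globalise. Your verification of the generating-function identities is also correct: the key cancellation via Hamilton's equations that turns the integrand into a total $\tau$-derivative is precisely the mechanism, and your handling of the boundary terms ($Y(s)=y$ killing the lower endpoint, $X(t)\equiv X$ killing the $\p X/\p y$ and fixing $\p X/\p X=I$ at the upper endpoint) is accurate. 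There is nothing to compare against in the paper itself.
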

 
 
\begin{lem} \label{dergf}For the generating function $\phi_s^t$ defined by (\ref{gfd}), we have
\[
\p_s\phi_s^t(X,y)=H(s,x,y),\quad \p_t\phi_s^t(X,y)=-H(t,X,Y)
\]
where $(X,Y)=\vp_s^t(x,y)$.
\end{lem}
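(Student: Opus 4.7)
The plan is to reduce the lemma to the classical derivatives of Hamilton's principal function. Set
\[
W(s,t,x,X) := \int_s^t \big(Y_s^\tau \dot X_s^\tau - H(\tau,X_s^\tau,Y_s^\tau)\big)\, d\tau,
\]
taken along the unique Hamilton trajectory starting at $(x,y)$ at time $s$ and ending at $(X,Y)$ at time $t$, which is determined by the position endpoints $(s,t,x,X)$ alone thanks to Lemma~\ref{1.1}. Since $\int_s^t \dot X_s^\tau\,d\tau = X - x$, the definition (\ref{gfd}) rewrites as
\[
\phi_s^t(X,y) = W(s,t,x,X) + y(x - X),
\]
where $x=x(s,t,X,y)$ is the implicit function defined by $X_s^t(x,y)=X$.

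I would then invoke the standard partial-derivative formulas
\[
\p_x W = -y,\quad \p_X W = Y,\quad \p_s W = H(s,x,y),\quad \p_t W = -H(t,X,Y).
\]
The first two express that $W$ is a type-I generating function for $\vp_s^t$ in the $(x,X)$ variables. The time derivatives come from differentiating under the integral sign: the first-order variation of the trajectory produces a bulk contribution that vanishes after an integration by parts (because $(X_s^\tau,Y_s^\tau)$ solves Hamilton's equations), leaving only the boundary Lagrangian $p\dot q - H$ evaluated at the moving endpoint, which after rearrangement gives $\pm H$.

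Applying the chain rule to $\phi_s^t(X,y) = W(s,t,x(s,t,X,y),X) + y\big(x(s,t,X,y) - X\big)$ at fixed $(X,y)$ yields
\[
\p_t \phi_s^t = \p_t W + (\p_x W + y)\, \p_t x = -H(t,X,Y) + (-y + y)\,\p_t x = -H(t,X,Y),
\]
and symmetrically $\p_s \phi_s^t = H(s,x,y)$. The Legendre cancellation $\p_x W = -y$ is exactly what makes the implicit-derivative terms $\p_t x$ and $\p_s x$ drop out.

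I expect the main obstacle to be the variational calculation of $\p_s W$ and $\p_t W$, which requires carefully varying a Hamilton trajectory under perturbations of the time endpoints with fixed spatial boundary data; once these are in hand, everything else is immediate. A more direct (but notationally heavier) alternative is to apply Leibniz's rule to (\ref{gfd}) together with Hamilton's equations for $(X_s^\tau,Y_s^\tau)$, producing the same cancellation without introducing $W$.
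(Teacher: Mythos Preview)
Your main argument has a genuine gap at the very first step. Lemma~\ref{1.1} asserts that $\alpha_s^t:(x,y)\mapsto(X_s^t(x,y),y)$ is a diffeomorphism, i.e.\ the trajectory is uniquely determined by the pair $(X,y)$. It does \emph{not} say that the trajectory is determined by the position endpoints $(x,X)$; that would require the map $(x,y)\mapsto(x,X_s^t(x,y))$ to be a diffeomorphism, equivalently $\p_y X_s^t$ invertible. Since $\p_y X_s^t\approx (t-s)\,\p_p^2 H$ for small $|t-s|$, this holds only when $H$ is nondegenerate in $p$ (e.g.\ strictly convex), which is precisely the extra hypothesis under which the paper later introduces the ``classical'' generating function $\psi_s^t(X,x)$ in Proposition~\ref{minsemi}. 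Under the standing assumption~(\ref{equnun}) alone, your $W(s,t,x,X)$ need not be a well-defined function, so the partial derivatives $\p_x W=-y$, $\p_s W=H(s,x,y)$, etc., on which your cancellation rests, are not available.

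The paper instead takes exactly the route you relegate to a parenthetical remark: it applies Leibniz's rule directly to the integral~(\ref{gfd}) in the variables $(X,y)$ (which \emph{are} good coordinates by Lemma~\ref{1.1}), uses Hamilton's equations $\dot X_s^\tau=\p_p H$, $\dot Y_s^\tau=-\p_x H$, and integrates by parts so that all bulk terms cancel, leaving only the boundary contribution $H(s,x,y)$ (respectively $-H(t,X,Y)$). If you want to rescue your approach, you could rewrite the Legendre relation as a computation in the $(X,y)$ chart rather than the $(x,X)$ chart---but at that point you are essentially reproducing the paper's direct calculation.
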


\begin{proof} 
Differentiating (\ref{gfd}) on both sides, we have

\beaa\fl
 \p_s \phi_s^t(X,y)=H(s,x,y)+\int_s^t\big(\frac{d}{ds}Y_s^{\tau}\frac{d}{d\tau}X_s^{\tau}+(Y_s^{\tau}-y)\frac{d}{ds}\frac{d}{d\tau}X_s^{\tau}+
\frac{d}{d\tau}Y_s^{\tau}\frac{d}{ds}X_s^{\tau}-\frac{d}{d\tau}X_s^{\tau}\frac{d}{ds}Y_s^{\tau}\big)d\tau\\
=H(s,x,y)+\int_s^t(Y_s^{\tau}-y)\frac{d}{ds}\frac{d}{d\tau}X_s^{\tau}d\tau+Y_s^{\tau}\frac{d}{ds}X^{\tau}_s|_s^t-\int_s^tY_s^{\tau}\frac{d}{d\tau}\frac{d}{ds}X_s^{\tau}d\tau\\
= H(s,x,y)\eeaa
where we have used $\p_1H_{\tau}(X_s^{\tau},Y_s^{\tau})=-\dot{Y}_s^{\tau}$, $\p_2H_{\tau}(X_s^{\tau},Y_s^{\tau})=\dot{X}_s^{\tau}$, and $X_s^t\equiv X$. Similarly, we have
\[ 
\p_t\phi_s^t(X,y)=-H(t,X,Y).
\]
\end{proof}

 \begin{prop}[Composition formula \cite{sikorav}]
 \label{comf}
 If a Lagrangian submanifold $L_0\subset T^*\R^d$ admits a
 generating family
  $S_0:\R^d\times \R^k\to \R$, then for $|t-s|<\delta_H$, the Lagrangian submanifold $\vp_s^t(L_0)$ has the generating family
  \be
  \label{ind}
 S(x,(\xi,\x,\y))=S_0(\x,\xi)+\phi_s^t(x,\y)+xy_0-\x\y
 \ee
 \end{prop}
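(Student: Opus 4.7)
The plan is to verify directly the two defining properties of a generating family for the candidate $S$: (i) that $0$ is a regular value of $\p_\eta S$ with $\eta=(\xi,\x,\y)$, so that $\Sigma_S$ is a submanifold, and (ii) that the associated map $i_S$ embeds $\Sigma_S$ onto $\vp_s^t(L_0)$.

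First I would compute the four partial derivatives
\[
\p_\xi S=\p_\xi S_0(\x,\xi),\quad \p_{\x}S=\p_x S_0(\x,\xi)-\y,\quad \p_{\y}S=\p_y\phi_s^t(x,\y)+x-\x,\quad \p_x S=\p_X\phi_s^t(x,\y)+\y,
\]
and read off the critical conditions. From $\p_\xi S=0$ and $\p_{\x}S=0$ one gets $(\x,\xi)\in\Sigma_{S_0}$ and $\y=\p_x S_0(\x,\xi)$, so that $(\x,\y)\in L_0$. From $\p_{\y}S=0$, which reads $\x=x+\p_y\phi_s^t(x,\y)$, the defining relations of the generating function $\phi_s^t$ of $\vp_s^t$ give $x=X_s^t(\x,\y)$; Lemma~\ref{dergf}-type computation then yields $\p_xS=Y_s^t(\x,\y)$. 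Hence $i_S(x,\eta)=(x,\p_x S)=\vp_s^t(\x,\y)$ and the image of $i_S$ is exactly $\vp_s^t(L_0)$.

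The main obstacle is the regular-value condition. Writing the Jacobian of $\p_\eta S$ with respect to $(x,\xi,\x,\y)$ in block form (rows indexed by $\p_\xi S,\p_{\x}S,\p_{\y}S$)
\[
\begin{pmatrix}
0 & \p^2_{\xi\xi}S_0 & \p^2_{\xi x}S_0 & 0\\
0 & \p^2_{x\xi}S_0 & \p^2_{xx}S_0 & -I\\
\p^2_{Xy}\phi_s^t+I & 0 & -I & \p^2_{yy}\phi_s^t
\end{pmatrix},
\]
I would prove surjectivity by a back-substitution argument: given a target $(w_1,w_2,w_3)$, use the third block equation to solve for the $x$-component --- this is where the hypothesis $|s-t|<\delta_H$ enters, since by Lemma~\ref{1.1} the diffeomorphism $\alpha_s^t$ forces $\p^2_{Xy}\phi_s^t+I$ to be invertible; the second block equation then determines $v_3$ freely; and the first block equation can be solved in $(v_1,v_2)$ because the regularity of the generating family $S_0$ is exactly the statement that $(\p^2_{\xi\xi}S_0,\p^2_{\xi x}S_0)$ has full row rank $k$ on $\Sigma_{S_0}$. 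Consequently $\Sigma_S$ is a $C^1$ submanifold of dimension $d$.

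Finally, for the embedding property, the analysis of the critical conditions shows that $\Sigma_S$ is parametrized smoothly and bijectively by $\Sigma_{S_0}$ via $(\x,\xi)\mapsto(\x,\xi,\y,x)$ with $\y=\p_x S_0(\x,\xi)$ and $x=X_s^t(\x,\y)$. Under this identification $i_S$ becomes the composition $\vp_s^t\circ i_{S_0}$, which is an embedding because $i_{S_0}$ is one by hypothesis and $\vp_s^t$ is a diffeomorphism of $T^*\R^d$. This yields $i_S(\Sigma_S)=\vp_s^t(L_0)$, completing the verification that $S$ is a G.F. for $\vp_s^t(L_0)$.
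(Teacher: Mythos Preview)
Your argument is correct and is exactly the ``straightforward'' direct verification the paper has in mind (the paper gives no details beyond that word): compute the fibre derivatives, identify $\Sigma_S$ with $\Sigma_{S_0}$ via $(\x,\xi)\mapsto(\x,\xi,\y,x)$, and read off $i_S=\vp_s^t\circ i_{S_0}$. One small correction: the identity $\p_xS=Y_s^t(\x,\y)$ follows directly from the defining relation $Y=y+\p_X\phi(X,y)$ of the generating function, not from Lemma~\ref{dergf}, which concerns the time derivatives $\p_s\phi$ and $\p_t\phi$.
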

 
The proof is straightforward.
 
 \begin{cor}
 \label{comp3}
 For each subdivision $0\leq s=t_0<t_1\dots<t_N=t\leq T$ satisfying $|t_i-t_{i+1}|<\delta_H$\/, if $\phi_H^{t_i,t_{i+1}}$ is the generating function of $\vp_H^{t_i,t_{i+1}}$ defined in Lemma~\ref{1.1}, we have the following for each $C^2$ function $v:\R^d\rightarrow\R$\/:
 \begin{enumerate}
 \item[i)]
 A generating family $S:\R^d\times (T^*\R^d)^N\to \R$ of the Lagrangian submanifold $\vp_H^{s,t}(dv)$ is
 \be
 \label{nnn}
 S(x,\eta)= v(x_0)+ \sum_{0\leq i<N} \phi_H^{t_i,t_{i+1}}(x_{i+1},y_i)+ \sum_{0\leq i<N}(x_{i+1}-x_i)y_i\/,
 \ee
 where $x_N:=x$, $\eta=\big((x_i,y_i)\big)_{0\leq i<N}$\/.
 \item[ii)]
One defines a $C^2$ family $S:[s,t]\times \R^d\times (T^*\R^d)^N\to\R$ such that each $S_\tau:=S(\tau,\cdot)$ is a generating family for $\vp_H^{s,\tau}(dv)$ as follows: let $\tau_j=s+(\tau-s)\frac{t_j-s}{t-s}$,
\be\label{gfqi}
S(\tau,x,\eta)=v(x_0)+\sum_{0\leq i<N}\phi_H^{\tau_i,\tau_{i+1}}(x_{i+1},y_i)+\sum_{0\leq i<N}(x_{i+1}-x_i)y_i\ee
 \item[iii)]
 For each critical point $\eta$ of $S(\tau,x;\cdot)$\/, the corresponding critical value is
 \[
 S_\tau(x;\eta)=v(x_0)+\int_s^\tau \left(Y_s^{\sigma}\dot{X_s^{\sigma}}-H\big(\sigma,X_s^{\sigma},Y_s^{\sigma}\big)\right)d\sigma\/,
 \]
 where $X_s^{\sigma}:=X_s^{\sigma}\big(x_0,dv(x_0)\big)$ and $Y_s^{\sigma}:=Y_s^{\sigma}\big(x_0,dv(x_0)\big)$\/. Hence, the critical values of $S(\tau,x;\cdot)$ are the real numbers
 \be
 \label{equncinq}
 v\big(X_\tau^s(z)\big)+\int_s^\tau \left(Y_\tau^{\sigma}(z)\dot{X_\tau^{\sigma}}(z)-H\big(\sigma,X_\tau^{\sigma}(z),Y_\tau^{\sigma}(z)\big)\right)d\sigma
 \ee
 with $z:=(x,y)$\/, $y\in\pi^{-1}(x)\cap\vp_H^{s,\tau}(dv)$\/.
 \end{enumerate}
 \end{cor}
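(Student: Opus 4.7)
The plan is to prove (i) by induction on $N$ using Proposition~\ref{comf}, deduce (ii) as a time rescaling of (i), and for (iii) solve the Euler-Lagrange equations for the critical points of $S(\tau,x,\cdot)$, recognize them as a single Hamiltonian trajectory sampled at the times $\tau_i$, then substitute the integral formula (\ref{gfd}) for each $\phi_H^{\tau_i,\tau_{i+1}}$ and let the $y_i(x_{i+1}-x_i)$ contributions cancel against the explicit sum $\sum(x_{i+1}-x_i)y_i$ in (\ref{gfqi}).

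For (i), the base case $N=1$ is a single application of Proposition~\ref{comf} to $L_0=dv$, which is generated by $S_0(x_0)=v(x_0)$ with an empty auxiliary fiber. For the inductive step, assume (\ref{nnn}) holds for $N-1$ subdivision points; this gives a generating family $\tilde S$ of $\vp_H^{s,t_{N-1}}(dv)$ with ``final position'' variable $x_{N-1}$. Applying Proposition~\ref{comf} once more with $\vp_s^t$ replaced by $\vp_H^{t_{N-1},t_N}$ promotes $(x_{N-1},y_{N-1})$ to fiber variables and appends $\phi_H^{t_{N-1},t_N}(x_N,y_{N-1})+(x_N-x_{N-1})y_{N-1}$, producing (\ref{nnn}) with $x_N=x$. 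Part (ii) is then immediate from (i) applied to the rescaled partition $\tau_j=s+(\tau-s)\frac{t_j-s}{t-s}$: its spacings satisfy $|\tau_{i+1}-\tau_i|\leq|t_{i+1}-t_i|<\delta_H$ uniformly in $\tau\in[s,t]$, so Lemma~\ref{1.1} applies to each $\phi_H^{\tau_i,\tau_{i+1}}$, and joint $C^2$ regularity in $(\tau,x,\eta)$ is inherited from the smooth dependence of $\phi_H^{s',t'}$ on its endpoints.

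For (iii), direct differentiation of (\ref{gfqi}) in the fiber variables gives $\p_{x_0}S=dv(x_0)-y_0$, $\p_{y_i}S=\p_y\phi_H^{\tau_i,\tau_{i+1}}(x_{i+1},y_i)+(x_{i+1}-x_i)$ for $0\leq i<N$, and $\p_{x_i}S=\p_X\phi_H^{\tau_{i-1},\tau_i}(x_i,y_{i-1})+(y_{i-1}-y_i)$ for $1\leq i<N$. Using the defining identities of the mixed generating function $\phi$ stated just above Lemma~\ref{1.1}, setting all these to zero is equivalent to the initial condition $y_0=dv(x_0)$ together with $(x_{i+1},y_{i+1})=\vp_H^{\tau_i,\tau_{i+1}}(x_i,y_i)$ for each $i$; in other words, the critical points are in bijection with samplings $(x_i,y_i)=\vp_H^{s,\tau_i}(x_0,dv(x_0))$ of a single Hamiltonian trajectory issued from $dv$. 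Substituting (\ref{gfd}) into each $\phi_H^{\tau_i,\tau_{i+1}}(x_{i+1},y_i)$ and evaluating $-y_i\int_{\tau_i}^{\tau_{i+1}}\dot X_s^\sigma\,d\sigma=-y_i(x_{i+1}-x_i)$ by the fundamental theorem of calculus, the $y_i(x_{i+1}-x_i)$ terms cancel exactly the explicit sum in (\ref{gfqi}); concatenating the remaining action integrals over the intervals $[\tau_i,\tau_{i+1}]$ yields the announced value $v(x_0)+\int_s^\tau\bigl(Y_s^\sigma\dot X_s^\sigma-H(\sigma,X_s^\sigma,Y_s^\sigma)\bigr)\,d\sigma$. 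The reformulation (\ref{equncinq}) follows by parameterizing the critical trajectories by their endpoint $z=(x,y)\in\pi^{-1}(x)\cap\vp_H^{s,\tau}(dv)$ rather than by their starting point $x_0=X_\tau^s(z)$, inverting the flow in time.

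I expect the main obstacle to be purely bookkeeping: one must consistently respect the convention that in $\phi_H^{\tau_i,\tau_{i+1}}(X,y)$ the first slot is the \emph{final} position $X=x_{i+1}$ while the second is the \emph{initial} momentum $y=y_i$, so that $\p_y\phi$ encodes the change of position and $\p_X\phi$ the change of momentum. A slip on this convention would scramble the critical-point system and prevent the pairwise cancellation of the momentum-position coupling terms; once it is correctly set up, (iii) reduces to a direct substitution.
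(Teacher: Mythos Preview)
Your proposal is correct and follows essentially the same approach as the paper: the paper proves (i) by noting the groupoid property $\vp_H^{s,t}=\vp_H^{t_{N-1},t_N}\circ\cdots\circ\vp_H^{t_0,t_1}$ and invoking the composition formula of Proposition~\ref{comf} (which is your induction), declares (ii) ``clear'', and says (iii) ``is proved by inspection''. You have simply written out the inspection in full, and the cancellation of the $y_i(x_{i+1}-x_i)$ terms against the explicit sum after substituting (\ref{gfd}) is exactly what that inspection amounts to.
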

 
 \begin{proof}
 i) As the Hamiltonian flow is a ``two-parameter groupoid'', we have that
 \[
 \vp_H^{s,t}=\vp_H^{t_0,t_N}=\vp_H^{t_{N-1},t_N}\circ\dots \circ\vp_H^{t_0,t_1}\/;
 \]
 hence, if $|t_{i+1}-t_i|<\delta_H$ for all $i$, it follows from the composition formula  in Proposition~\ref{comf} that formula \ref{nnn} does define a generating family for $\vp_H^{s,t}(dv)$\/.
 
 ii) is clear.
 
 iii) is proved by inspection (and very important).
 \end{proof}
 
 \subsection{Generating families quadratic at infinity}
 
  \begin{lem}
  If $H$ has compact support, the generating families constructed in Corollary~\ref{comp3} are quadratic at infinity when the $C^2$ function $v$ is Lipschitzian.
  \end{lem}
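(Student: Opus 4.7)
The plan is to identify a natural nondegenerate quadratic form $Q$ built into the formula for $S$, and then show that the remainder $S-Q$ has bounded $\eta$-derivative on $K\times(T^*\R^d)^N$ for every compact $K\subset\R^d$. The main work is not in $Q$ (which is essentially algebraic) but in controlling the generating functions $\phi_H^{t_i,t_{i+1}}$.

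First, with $x_N=x$ regarded as a fixed parameter, the only terms in
\[
S(x,\eta)=v(x_0)+\sum_{0\le i<N}\phi_H^{t_i,t_{i+1}}(x_{i+1},y_i)+\sum_{0\le i<N}(x_{i+1}-x_i)y_i
\]
that are bilinear in $\eta=((x_i,y_i))_{0\le i<N}$ come from the last sum. Isolating them, I would take
\[
Q(\eta):=\sum_{i=0}^{N-2}x_{i+1}y_i-\sum_{i=0}^{N-1}x_iy_i,
\]
so that
\[
S(x,\eta)-Q(\eta)=v(x_0)+\sum_{0\le i<N}\phi_H^{t_i,t_{i+1}}(x_{i+1},y_i)+x\cdot y_{N-1}.
\]
Viewed as a pairing between $(x_0,\dots,x_{N-1})$ and $(y_0,\dots,y_{N-1})$, the form $Q$ has matrix which is lower bidiagonal with $-1$ on the diagonal and $+1$ on the subdiagonal; its determinant is $(-1)^N$, so $Q$ is nondegenerate (tensoring with the identity on $\R^d$ takes care of the vectorial case).

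Next, I would compute $\partial_\eta(S-Q)$ componentwise. The $x_0$-derivative is $dv(x_0)$, globally bounded because $v$ is Lipschitz. For $1\le j\le N-1$ the $x_j$-derivative is $\partial_X\phi_H^{t_{j-1},t_j}(x_j,y_{j-1})$; for $0\le i\le N-2$ the $y_i$-derivative is $\partial_y\phi_H^{t_i,t_{i+1}}(x_{i+1},y_i)$; finally the $y_{N-1}$-derivative is $\partial_y\phi_H^{t_{N-1},t_N}(x,y_{N-1})+x$, which is bounded on $K\times(T^*\R^d)^N$ once the $\phi$-derivatives are, since $x\in K$.

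The genuine obstacle is therefore showing that each $\phi_H^{t_i,t_{i+1}}$ has globally bounded derivatives, which I would handle by establishing the stronger fact that $\phi_H^{s,t}$ is compactly supported when $H$ is. The argument uses the defining relations $x=X+\partial_y\phi(X,y)$ and $Y=y+\partial_X\phi(X,y)$ from Lemma~\ref{1.1}: compact support of $H$ forces $\vp_H^{s,t}$ to be the identity outside a compact set $K_1\subset T^*\R^d$, hence $\alpha_s^t$ is the identity off a compact set, and thus for $(X,y)$ outside a compact subset of $\R^d\times\R^d$ one has $x=X$ and $Y=y$, giving $\partial_y\phi=\partial_X\phi=0$ there. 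Since the complement of a compact set in $\R^d\times\R^d$ is connected (for $d\ge 1$), $\phi_H^{s,t}$ is constant off a compact set, and normalizing that constant to zero yields compact support, hence bounded derivatives. Combining this with the Lipschitz bound on $dv$ and the compactness of $K$ gives the required uniform bound on $|\partial_\eta(S-Q)|$, so $S$ is a G.F.Q.I.\ with associated quadratic form $Q$.
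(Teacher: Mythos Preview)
Your proof is correct and follows essentially the same approach as the paper: both isolate the same nondegenerate quadratic form $Q(\eta)=-x_{N-1}y_{N-1}+\sum_{0\le i<N-1}(x_{i+1}-x_i)y_i$ and show that the remainder $\psi=S-Q=v(x_0)+xy_{N-1}+\sum\phi_H^{t_i,t_{i+1}}(x_{i+1},y_i)$ has bounded $\eta$-derivatives, using the Lipschitz bound on $v$, the compact support of the $\phi_H^{t_i,t_{i+1}}$, and the compactness of $K$. You simply fill in more detail than the paper does (the nondegeneracy of $Q$, the componentwise derivatives, and an explicit argument for why $\phi_H^{s,t}$ is compactly supported---which in fact follows directly from the integral formula \eref{gfd}, since the integrand vanishes identically outside the support of $H$, making the normalization step unnecessary).
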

  
  \begin{proof}
  Each $\phi_H^{t_i,t_{i+1}}$ has compact support and therefore bounded derivatives. Hence $S(x,\eta)=\psi(x,\eta)+Q(\eta)$ is quadratic at infinity with
 $Q(\eta):=-x_{N-1}y_{N-1}+ \sum_{0\leq
   i<N-1}(x_{i+1}-x_i)y_i$ and $\psi(x;\eta)=v(x_0)+xy_{N-1}+\sum_{0\leq i<N}\phi_H^{t_i,t_{i+1}}(x_{i+1},y_i)$ having bounded derivatives with respect to $\eta$ for $x$ on compact subsets of $\R^d$.
  \end{proof}

Denote $\pi$ the projection $T^*\R^d\to \R^d$. A necessary condition for $L$ to
 admit a G.F.Q.I. is that, for any compact subset $K$ of $\R^d$\/, the intersection $L\cap\pi^{-1}(K)$ be compact and \emph{nonempty}: indeed, a function on $\R^k$ whose differential, up to a bounded map, equals that of a nondegenerate quadratic form off a compact set must have critical points.
  
It follows that there does not always exist a G.F.Q.I. for $L=\vp_H^{s,t}(dv)$ if $H$ is not compactly supported, even when it satisfies (\ref{equnun}) and $v$ has as little growth at infinity as possible:
 
 \begin{ex}
 If the Hamiltonian $H\in C^2(\R\times T^*\R)$ is given by $H(t,x,y)=x^2+y^2$\/, then $\vp_H^{0,t}(x,y)=(x \cos{2 t}-y\sin{2 t},y\cos{2t}+x\sin{2t})$\/; if $v=0$\/, it follows that
 \[
 L:=\vp_H^{0,\pi/4}(dv)=\{0\}\times\R.
 \]
 has empty intersection with $\pi^{-1}(x)=\{x\}\times\R$ for $x\neq0$ and noncompact intersection with $\pi^{-1}(0)$\/, which prevents $L$ from admitting a G.F.Q.I..
 \end{ex}

%
 It is also essential to require that $v$ has little growth at infinity.
 
 \begin{ex}
 It is essential that $v$ be Lipschitzian: indeed, if $d=1$\/, $H(t,x,y)=h(x,y)=\frac{1}{2}y^2$ for $|y|\leq 1$, $|\p_x h|,|\p_y h|\leq 1/2$ and $h$ vanishes off a compact set of $\R^2$. If $v(x)=\frac{1}{3}x^3$\/, then for $t=1$, \[
 \pi\circ\vp_H^{0,1}(dv)\subset\{x+x^2,|x|\leq 1\}\cup [\frac{1}{2},+\infty]\cup[-\infty,-\frac{1}{2}.]
 \] One sees that the image under the projection $\pi$ of $L=\vp_H^{0,1}(dv)$ is not $\R$.
 \end{ex}

 As the main ingredient in the construction of generating families is the Hamiltonian flow, what matters essentially over a given compact subset of $\R^d$ is the region swept by the Hamiltonian flow; this is the idea of what is called the \emph{property of finite propagation speed} in \cite{VC}, Appendix~A:
 
 \begin{prop}
 \label{compact}
 Let $[s,t]\subset[0,T]$ and $L=\vp_H^{s,t}(dv)$.  If for any compact subset $K\subset\R^d$, the set
 \[
 \mathcal {U}_K:=\bigcup_{\tau\in [s,t]}\{\tau\}\times
 \left\{\vp_H^{s,\tau}\left(\vp_H^{t,s}\big(\pi^{-1}(K)\big)\cap dv\right)\right\},
 \]
 is non empty and compact, then $L$ admits G.F.Q.I.'s in the sense that each $L|_K:=L\cap \pi^{-1}(K)$ has a G.F.Q.I..
 \end{prop}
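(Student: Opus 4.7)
The plan is to reduce Proposition~\ref{compact} to the preceding lemma, which handles compactly supported $H$ together with globally Lipschitz $v$. The idea is that only the values of $H$ on $\mathcal U_K$ (and of $v$ on the projection of the time-$s$ slice of $\mathcal U_K$) influence $L\cap\pi^{-1}(K)$, so we can modify $H$ and $v$ outside those compact regions without affecting $L$ over $K$.

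Fix a compact $K\subset\R^d$ and choose a slightly larger compact $K'$ with $K\subset\mathrm{int}(K')$. By hypothesis $\mathcal U_{K'}$ is a compact subset of $[s,t]\times T^*\R^d$; pick an open neighborhood $V$ of $\mathcal U_{K'}$ with compact closure, on which the dynamics will be preserved. Let $K_0:=\pi\bigl(\mathcal U_{K'}\cap(\{s\}\times T^*\R^d)\bigr)$, a compact subset of $\R^d$ collecting all initial $x$-coordinates whose $H$-trajectories end above $K'$.

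Using a smooth bump function supported in $V$ and equal to $1$ on $\mathcal U_{K'}$, construct $\tilde H\in C_c^2([s,t]\times T^*\R^d)$ with $\tilde H\equiv H$ on $\mathcal U_{K'}$; using a standard truncation (multiply $v$ minus an affine model by a cutoff outside a neighborhood of $K_0$), produce $\tilde v\in C^2\cap C^{\Lip}(\R^d)$ agreeing with $v$ on an open neighborhood of $K_0$. The preceding lemma applied to $(\tilde H,\tilde v)$ then furnishes a G.F.Q.I.\ $\tilde S$ for $\tilde L:=\vp_{\tilde H}^{s,t}(d\tilde v)$. It remains to show that $\tilde L$ coincides with $L$ over some open neighborhood of $K$, so that $\tilde S$ can serve as a G.F.Q.I.\ for $L|_K$. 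The forward inclusion is direct: if $(x,p)\in L|_{K'}$ with $(x,p)=\vp_H^{s,t}(x_0,dv(x_0))$ and $x_0\in K_0$, the full $H$-trajectory from $(x_0,dv(x_0))$ lies in $\mathcal U_{K'}\subset V$, where $\tilde H=H$, and starts at $(x_0,d\tilde v(x_0))$, so ODE uniqueness gives $(x,p)\in\tilde L$.

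The main obstacle is the reverse inclusion over a neighborhood of $K$: ruling out spurious points of $\tilde L$ above $K$ coming from trajectories of $X_{\tilde H}$ that start outside $K_0$ or that leave $V$. Here one exploits the buffer $K'\supset K$: by compactness of $\mathcal U_{K'}$, continuity of the $\tilde H$-flow, and a Gronwall estimate using $\sup|D^2\tilde H|<\infty$, trajectories of $X_{\tilde H}$ issued from $d\tilde v$ and ending close enough to $\pi^{-1}(K)$ must remain in $V$ throughout $[s,t]$ and have their starting points in $K_0$; on such trajectories $\tilde H=H$ and $d\tilde v=dv$, so they coincide with $H$-trajectories from $dv$ and hence land in $L$. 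Shrinking $V$ around $\mathcal U_{K'}$ if necessary gives an open $U\supset K$ with $\tilde L\cap\pi^{-1}(U)=L\cap\pi^{-1}(U)$, completing the proof.
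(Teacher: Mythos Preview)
Your proof is correct and takes essentially the same route as the paper: multiply $H$ by a compactly supported cutoff $\chi$ equal to $1$ near $\mathcal U_K$ and invoke the preceding lemma so that formula~(\ref{nnn}) with $\tilde H=\chi H$ furnishes a G.F.Q.I. The paper's argument is much terser---it neither introduces the buffer $K'\supset K$ nor discusses the reverse inclusion $\tilde L|_K\subset L|_K$, and it relegates the truncation of $v$ to Remark~\ref{localv}---so your version is in fact more careful on these points.
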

 
 \begin{proof}
 For any $K$, let $\tilde{H}=\chi H$, where $\chi$ is a compactly supported smooth function on $[0,T]\times T^*\R^d$ equal to $1$ in a neighbourhood of $\mathcal{U}_K$\/. Then formula \eref{nnn} with $H:=\tilde{H}$ gives a G.F.Q.I. $S_{\tilde H}$ for $L|_K=\vp_H^{s,t}\big(\pi^{-1}(K)\cap dv\big)$.
 \end{proof}
 
 \begin{rem}
 \label{localv}
 One can also truncate $v$, as the effective region for $v$ is $\pi\big(\vp_H^{t,s}(\pi^{-1}(K)\big)$. This may help to localize the minmax.
 
 Condition~\eref{equnun} is not required here, provided $H$ is $C^2$ and such that $\vp_H^{s,t}$ is defined for all $s,t\in[0,T]$\/.
 \end{rem}
 
 
 \begin{lem}
 \label{coi}
 If two families $S$ and $S'$ are quadratic at infinity with $|S-S'|_{C^0}<\infty$\/, then the associated minimax functions satisfy
 \[
 |R_{S}(x)-R_{S'}(x)|\leq |S-S'|_{C^0}\/.
 \]
 \end{lem}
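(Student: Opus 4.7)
Write $c := |S-S'|_{C^0}$; the whole lemma is the statement that the minimax is $1$-Lipschitz with respect to the $C^0$ norm on the family. My plan is the usual monotonicity-plus-symmetry argument, whose only genuine content is matching the two generators $A$ (for $S$) and $A'$ (for $S'$) of the corresponding relative homology groups.

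\textbf{Step 1: pointwise inequality.} From $S(x,\eta)\le S'(x,\eta)+c$ and $S'(x,\eta)\le S(x,\eta)+c$, one immediately obtains inclusions of sublevel sets
\[
(S')_x^{a-c}\subset S_x^a\subset (S')_x^{a+c}, \qquad
(S')_x^{-a-c}\subset S_x^{-a}\subset (S')_x^{-a+c},
\]
for every real $a$. This is the only analytic ingredient.

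\textbf{Step 2: identification of the generators.} Both $S$ and $S'$ are quadratic at infinity, so for all sufficiently large $a$ the pairs $(S_x^a,S_x^{-a})$ and $((S')_x^a,(S')_x^{-a})$ are deformation retracts of the stable pairs $(S_x^{\infty},S_x^{-\infty})$ and $((S')_x^{\infty},(S')_x^{-\infty})$ respectively, and the inclusions of Step 1 induce isomorphisms at the level of homology. Since $c$ is finite and both families are quadratic at infinity with respect to nondegenerate forms, their Morse indices at infinity agree, and chasing the inclusions above identifies the degree-$k_{\infty}$ generators $A$ and $A'$: a singular relative cycle $\sigma$ with $|\sigma|\subset S_x^a$ and boundary in $S_x^{-a}$ whose class is $A$ represents, via the inclusion $S_x^a\hookrightarrow (S')_x^{a+c}$ and $S_x^{-a}\hookrightarrow (S')_x^{-a+c}$, the class $A'$ in $H_{k_{\infty}}\bigl((S')_x^{a+c},(S')_x^{-a+c}\bigr)$.

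\textbf{Step 3: taking maxima and infima.} Fix any descending cycle $\sigma$ representing $A$. On the finite set $|\sigma|$, the pointwise inequality of Step~1 gives
\[
\max_{\eta\in|\sigma|} S(x,\eta)\;\le\;\max_{\eta\in|\sigma|} S'(x,\eta)+c.
\]
By Step~2, $\sigma$ also represents the generator $A'$ for $S'$ (possibly after raising the threshold $a$, which does not affect the minimax), so passing to the infimum over descending cycles yields $R_S(x)\le R_{S'}(x)+c$. Swapping the roles of $S$ and $S'$ gives the reverse inequality, and together they prove $|R_S(x)-R_{S'}(x)|\le c$.

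\textbf{Main obstacle.} The one subtlety is Step~2: one must be sure that the bounded $C^0$ distance guarantees that the two ``canonical'' generators really correspond under the inclusions of sublevel sets. Once this is checked (via the stability of the homotopy type of the pair at infinity, which is what the quadratic-at-infinity assumption buys), the rest is a one-line monotonicity argument.
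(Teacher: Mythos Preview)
Your proof is correct and follows exactly the same route as the paper's: monotonicity of the minmax with respect to the family, applied to $S\le S'+c$, then symmetrized. The paper compresses your Steps~1--3 into a single line (``if $S\le S'$ then by definition $R_S(x)\le R_{S'}(x)$'') and does not spell out your Step~2; your explicit identification of the generators $A$ and $A'$ via the sublevel-set inclusions is the honest content behind that ``by definition'', and is worth keeping.
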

 
 \begin{proof}
 If $S\leq S'$, then by definition $R_{S}(x)\leq R_{S'}(x)$. Hence, in general, the inequality $S\leq S'+ |S-S'|_{C^0}$ yields $R_S(x)\leq R_{S'}(x)+ |S-S'|_{C^0}$. We conclude by exchanging $S$ and $S'$.
 \end{proof}
 
 \begin{prop}
 Under the hypotheses of Proposition~\ref{compact} and with the notation of its proof, the Lagrangian submanifold $L$ determines a minmax function, given by
 \[
 R(x)=\inf\max S_{\tilde H}(x,\eta),\quad \hbox{if}\,\,x\in K\subset \R^d
 \]
 and independent of the truncation $\tilde H$ and the subdivision of $[s,t]$ used to define $S_{\tilde H}$\/.
 \end{prop}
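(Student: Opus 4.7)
The plan is to derive both independence statements---under the truncation $\tilde H$ and under the subdivision---from the Viterbo-Th\'eret uniqueness theorem for G.F.Q.I.'s together with the intrinsic critical value formula (\ref{equncinq}) and Lemma~\ref{coi}. The overall scheme: show that any two generating families $S_1,S_2$ produced by different choices are equivalent up to the three operations (a), (b), (c) used in the definition of minmax; conclude that $R_{S_1}=R_{S_2}+c$ for some constant $c$; then pin $c=0$ using (\ref{equncinq}).

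For independence under the subdivision, it suffices by taking a common refinement to compare a generating family $S$ with that obtained by inserting one extra time $\tau\in(t_i,t_{i+1})$. Applying the composition formula of Proposition~\ref{comf} to $\vp_H^{t_i,t_{i+1}}=\vp_H^{\tau,t_{i+1}}\circ\vp_H^{t_i,\tau}$ shows that the refined generating family is obtained from the coarser one by adding auxiliary variables together with bilinear coupling terms and compactly supported contributions from the two pieces of $\phi_H$; after a linear fiber diffeomorphism absorbing those compactly supported pieces and diagonalizing the coupling, the excess is precisely a nondegenerate quadratic form, i.e.\ a stabilization. Hence the two G.F.Q.I.'s are Viterbo-Th\'eret equivalent and $R_{S_1},R_{S_2}$ differ by a constant. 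For independence under the truncation, fix the subdivision and let $\tilde H_1,\tilde H_2$ both equal $H$ on a neighbourhood of $\mathcal{U}_K$. Since each $\phi_{\tilde H_j}^{t_i,t_{i+1}}$ has compact support in $(x,y)$, the difference $S_{\tilde H_1}-S_{\tilde H_2}$ is $C^0$-bounded, so Lemma~\ref{coi} already gives $|R_{S_{\tilde H_1}}-R_{S_{\tilde H_2}}|<\infty$; moreover both serve as G.F.Q.I.'s of $L|_K$ in the sense of Proposition~\ref{compact}, so Viterbo-Th\'eret uniqueness again reduces the discrepancy to a constant.

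To pin this constant to zero in both cases, I would appeal to formula (\ref{equncinq}): the critical values of $S(\tau,x,\cdot)$ are intrinsic integrals along characteristics of $H$ starting on $dv$, so they depend only on $v$ and on the Hamiltonian flow on $\mathcal{U}_K$, where all admissible choices coincide with $H$; hence the sets of critical values over any $x\in K$ agree exactly for the two constructions. A clean way to conclude is to let the endpoint $t$ vary continuously down to $s$: at $\tau=s$ every construction reduces trivially to $v(x)$, so the constant vanishes there; and since it depends continuously on $\tau$ while being a difference of critical values drawn from a common set (whose dependence on $\tau$ is at worst piecewise smooth), it must remain zero throughout. The main delicate step will be the bookkeeping identifying a subdivision refinement with a genuine stabilization up to fiber diffeomorphism; once that identification is in place, the argument is essentially an application of results already cited in the preceding sections, together with the localization flexibility noted in Remark~\ref{localv}.
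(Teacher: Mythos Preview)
Your overall strategy---reduce to Viterbo--Th\'eret uniqueness and then kill the additive constant---is reasonable, but the pinning step has a genuine gap, and it is exactly the step the paper handles by a different and more direct device.

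The paper does not invoke Viterbo--Th\'eret at all here. For truncation independence it interpolates linearly between the two compactly supported Hamiltonians, $H^{\mu}=\mu\tilde H+(1-\mu)\tilde H'$, takes a common subdivision valid for all $\mu$, and observes two things: (i) $R_{S_\mu}(x)$ is continuous in $\mu$ by Lemma~\ref{coi}; (ii) by formula~(\ref{equncinq}) the \emph{set} of critical values of $S_\mu(x,\cdot)$ depends only on the flow restricted to $\mathcal U_K$, where all $H^\mu$ agree with $H$, so this set is \emph{independent of $\mu$}. Sard's theorem says this set has measure zero, so a continuous function of $\mu$ with values in it is constant. Subdivision independence (Lemma~\ref{ides}) is proved by the same continuity/Sard trick, sliding the inserted node via $\zeta_\mu=\{s\le s+\mu(\tau-s)<t\}$; the explicit stabilization only enters at the endpoint $\mu=0$ to identify $R_{S_0}$ with the minmax of the coarse family.

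Your argument replaces the homotopy in $\mu$ by a homotopy in the \emph{time endpoint} $\tau$, arguing that $c(\tau):=R_{S_1,\tau}(x)-R_{S_2,\tau}(x)$ is continuous, vanishes at $\tau=s$, and is ``a difference of critical values drawn from a common set''. The problem is that this common set \emph{moves with $\tau$}: the critical values given by (\ref{equncinq}) depend on $\tau$ through the flow. Continuity of $c(\tau)$ together with $c(s)=0$ and $c(\tau)\in CV_\tau(x)-CV_\tau(x)$ does not force $c\equiv 0$; for instance, if $CV_\tau(x)=\{0,\tau^2\}$ one could have $c(\tau)=\tau^2$. The Sard argument the paper uses works precisely because the parameter being varied ($\mu$) leaves the critical value set \emph{fixed}; your parameter ($\tau$) does not. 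Your proposal can be repaired, but the natural repair---homotope between the two generating families while keeping the underlying characteristics on $\mathcal U_K$ unchanged---is exactly the paper's argument, and at that point the appeal to Viterbo--Th\'eret becomes superfluous.
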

 
 \begin{proof} Let $\tilde{H}$ and $\tilde{H}'$ be two truncations for $H$ on $\mathcal{U}_K$ as in the proof of Proposition~\ref{compact}. Let $H^{\mu}=\mu\tilde{H}+(1-\mu)\tilde{H'}$, $\mu\in [0,1]$\/; as the constant $c_{H^\mu}$ of \eref{equnun} is uniformly bounded, one can find a subdivision $s=t_0<t_1\dots<t_N=t$ satisfying $|t_i-t_{i+1}|<\delta_{H_\mu}$ for all $\mu$\ (see Lemma \ref{1.1}); if $S_{\mu}$ denotes the corresponding G.F.Q.I. of $L|_K=L\cap \pi^{-1}(K)$ for $0\leq\mu\leq1$ then, by Lemma \ref{coi}, as
 $S_\mu$ depends continuously on $\mu$, so does the minmax  $R_{S_{\mu}}(x)$ for $x\in\pi(L)$\/.
 
 On the other hand, $R_{S_{\mu}}(x)$ is a critical value of the map $\eta\mapsto S_{\mu}(x,\eta)$, and, by \eref{equncinq}, \emph{the set of all such critical values is independent of $\mu$ and the subdivision, and depends only on $\mathcal{U}_K$}; as it has measure zero by Sard's Theorem,  $R_{S_{\mu}}(x)$ is constant for $\mu\in [0,1]$.
 
 \goodbreak
 The fact that the critical value $R_{S}(x)$ itself does not depend on the subdivision is established in Lemma~\ref{ides}.
 \end{proof}
 
 \begin{ex} If the base manifold is $M=\mathbb{T}^d$, taking its universal
 covering $\R^d$, we can consider $v:\R^d\to \R$ a periodic function
 and $H:\R\times T^*\R^d\to\R$ periodic in $x$. Then in order that
 $L=\vp_H^{s,t}(dv)$ admits a G.F.Q.I., it is enough to require that the
 flow $\vp_H^{s,\tau}$ is well-defined for $\tau\in [s,t]$. Indeed,
 since $dv$ is compact, $\bigcup_{\tau\in
 [s,t]}\{\tau\}\times\vp_H^{s,\tau}(dv)$ is compact, hence the
 condition of finite propagation speed is satisfied automatically.
 \end{ex}

 \begin{ex} The following hypotheses yield the finite propagation speed property:
 \[
 |\p_y H|\leq
 C_H'(1+|x|),\quad |\p_x H|\leq C_H(1+|y|)\/,
 \]
It is a classical condition for the existence and uniqueness of viscosity solutions in
 $\R^d$, see \cite{GL2}.
 \end{ex}

 \subsection{Generalized generating families and minmax in the Lipschitz setting}
 
 Already if $d=1$\/, $H(t,x,y)=\frac{1}{2}y^2$ and $v(x)=\arctan x$\/, the Lagrangian submanifold $\vp_H^{0,t}(dv)=\left\{\left(x+\frac{t}{1+x^2},\frac{1}{1+x^2}\right):x\in\R\right\}$ is not the graph of a function for $t>0$ large enough\/, and the minimax of its generating family $S_t(x;x_0,y_0)=\arctan x_0+\frac{t}{2}y_0^2+(x-x_0)y_0$ is not a $C^1$ function, though it is locally Lipschitzian (see Proposition~\ref{lemun47} herafter).
 
 Hence, in order to iterate the minmax procedure, one is led to
 defining the minmax when the Cauchy datum is a Lipschitzian
 function. We will use Clarke's generalization of the derivatives of
 $C^1$ functions in the Lipschitz setting \cite{FHb}, see 
 \ref{appA}.

 \begin{prop}
 Under the hypothesis (\ref{equnun}) and with the notation of Corollary~\ref{comp3}, if $v$ is only locally Lipschitzian, the family $S$ given by (\ref{nnn}) generates $L=\vp_H^{s,t}(\p v)$ in the sense that
 \be\label{prop134}
 L=\big\{\big(x,\p_x S(x;\eta)\big)\big|0\in \p_{\eta}S(x;\eta)\big\},
 \ee
 where $\p$ denote Clarke's generalized derivative and $\p v:=\{(x,p),p\in \p v(x)\}$.
 \end{prop}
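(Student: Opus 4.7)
The plan is to exploit the fact that the only non-$C^2$ piece of the family $S$ is the term $v(x_0)$. Write $S(x;\eta)=v(x_0)+F(x;\eta)$, with $F$ of class $C^2$ in all its arguments, and note that $v(x_0)$ depends on $\eta=\bigl((x_i,y_i)\bigr)_{0\le i<N}$ only through its first block $x_0$. The sum rule for Clarke's generalised gradient, applied to $S=v(x_0)+F$ with $F$ of class $C^1$, then gives the equality
\[
\p_\eta S(x;\eta)=\bigl(\p v(x_0)\oplus\{0\}\bigr)+\nabla_\eta F(x;\eta),
\]
so the inclusion $0\in\p_\eta S(x;\eta)$ decouples into a Clarke-type condition on the $x_0$-block together with classical vanishing conditions $\nabla_{y_i}F=0$ and $\nabla_{x_i}F=0$ on the remaining blocks.

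A direct computation from (\ref{nnn}) yields $\nabla_{x_0}F=-y_0$, together with $\nabla_{y_i}F=x_{i+1}-x_i+\p_y\phi_H^{t_i,t_{i+1}}(x_{i+1},y_i)$ for $0\le i<N$ and $\nabla_{x_i}F=y_{i-1}-y_i+\p_X\phi_H^{t_{i-1},t_i}(x_i,y_{i-1})$ for $0<i<N$. Hence the critical-point conditions read $y_0\in\p v(x_0)$ together with
\[
x_i=x_{i+1}+\p_y\phi_H^{t_i,t_{i+1}}(x_{i+1},y_i),\qquad y_{i+1}=y_i+\p_X\phi_H^{t_i,t_{i+1}}(x_{i+1},y_i),
\]
for $0\le i<N$, upon setting $y_N:=\p_x S(x;\eta)=y_{N-1}+\p_X\phi_H^{t_{N-1},t_N}(x,y_{N-1})$. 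By the characterisation recalled in Lemma~\ref{1.1}, these relations say precisely that $(x_{i+1},y_{i+1})=\vp_H^{t_i,t_{i+1}}(x_i,y_i)$ for every $i$.

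Composing via the two-parameter groupoid property gives $(x_N,y_N)=\vp_H^{s,t}(x_0,y_0)$ with $(x_0,y_0)\in\p v$, so the point $\bigl(x,\p_x S(x;\eta)\bigr)$ lies in $\vp_H^{s,t}(\p v)=L$, which is one inclusion in (\ref{prop134}). The reverse inclusion is obtained by reading the chain backwards: given $(x,y)\in L$, one picks $(x_0,y_0)\in\p v$ such that $\vp_H^{s,t}(x_0,y_0)=(x,y)$, takes the intermediate points of the trajectory as the blocks of $\eta$, and checks that the resulting $\eta$ satisfies all critical conditions and that $\p_x S(x;\eta)=y$.

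The one delicate point is the equality in the Clarke sum rule used above: for two Lipschitz summands one would only have an inclusion, but equality does hold when one summand is $C^1$, which is our situation since $F$ is $C^2$. The other ingredient is the trivial chain rule $\p_\eta(v\circ\pi_{x_0})=\p v(x_0)\oplus\{0\}$ for the linear projection $\pi_{x_0}$. Once these two classical facts from Clarke's calculus are taken for granted, the remainder is a bookkeeping exercise faithfully modelled on the $C^2$ case.
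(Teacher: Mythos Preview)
Your argument is correct and follows essentially the same route as the paper's own proof, which simply states without further elaboration that $0\in\p_\eta S(x;\eta)$ amounts to $y_0\in\p v(x_0)$ together with the recursive relations $x_i=x_{i+1}+\p_{y_i}\phi_H^{t_i,t_{i+1}}(x_{i+1},y_i)$ and $y_{i+1}=y_i+\p_{x_{i+1}}\phi_H^{t_i,t_{i+1}}(x_{i+1},y_i)$. You have made explicit the two points the paper leaves implicit---the equality case of the Clarke sum rule when one summand is $C^1$, and the verification of both inclusions via the groupoid property of the flow---so your write-up is in fact more complete, but the underlying idea is identical.
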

 
 \begin{proof}
 The equation $0\in \p_{\eta}S(x;\eta)$ means that $y_0\in \p v(x_0)$
 and $y_{i+1}=y_i+\p_{x_{i+1}}\phi_H^{t_i,t_{i+1}}(x_{i+1},y_i)$\/,
 $x_{i}=x_{i+1}+\p_{y_i}\phi_H^{t_i,t_{i+1}}(x_{i+1},y_i)$ for $0\leq
 i<N$\/,where $x:=x_N$ et $\eta=(x_i,y_i)_{0\leq i<N}$.
 \end{proof}
 
 However, this definition of a generating family is not invariant by fiberwise diffeomorphism, even by the following very simple (and useful) one:
 \[
 \big(x;(x_i)_{0\leq i<N},(y_i)_{0\leq i<N}\big)\mapsto\big(x,(x_{i+1}-x_i,y_i)_{0\leq i<N}\big)=:\big(x,(\xi_i,y_i)_{0\leq i<N}\big)\/;
 \]
 indeed, it transforms the family $S$ given by \eref{nnn} into
 \[\fl
 S'\big(x;(\xi_i,y_i)_{0\leq i<N}\big):=v\Big(x-\sum_{0\leq i<N}\xi_i\Big)+ \sum_{0\leq i<N} \phi_H^{t_i,t_{i+1}}\Big(x-\sum_{i<j<N}\xi_j,y_i\Big)+ \sum_{0\leq i<N}\xi_i y_i\,,
 \]
 for which $\p_xS'\big(x;(\xi_i,y_i)_{0\leq i<N}\big)$ is not a point, but the subset
 \[
 \p v\Big(x-\sum_{0\leq i<N}\xi_i\Big)+ \sum_{0\leq i<N} \p_1\phi_H^{t_i,t_{i+1}}\Big(x-\sum_{i<j<N}\xi_j,y_i\Big)\/.
 \]
 As often, this difficulty is overcome by finding the right definition\footnote{But this example exhibits one of the features of  the Clarke derivative: the relation $(y,0)\in\p S'(x,\eta)$ is definitely not equivalent to $y\in\p_x S'(x,\eta)$\/, $0\in\p_{\xi_i} S'(x,\eta)$ and $0\in\p_{y_i} S'(x,\eta)$\/.}:

 \begin{defn}
 \label{nonse}
 A Lipschitz family $S:\R^d\times \R^k\to \R$ is called a \emph{generating
 family} for $L\subset T^*\R^d$ when
 \[
 L=\{(x,y)\in T^*\R^d| \exists \eta\in\R^k: (y,0)\in \p S(x,\eta)\}\/.
 \]
 \end{defn}
 
 \begin{lem}
 This definition of a generating family is invariant by fiberwise
 $C^1$ diffeomorphisms.
 \end{lem}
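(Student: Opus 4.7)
The plan is to reduce the claim to the standard chain rule for Clarke's generalized gradient. Write the fiberwise $C^1$ diffeomorphism as $\Phi(x,\eta):=(x,\varphi(x,\eta))$, so that $\tilde S=S\circ\Phi$, and compute its Jacobian in block form
\[
D\Phi(x,\eta)=\begin{pmatrix} I & 0 \\ \p_x\varphi(x,\eta) & \p_\eta\varphi(x,\eta)\end{pmatrix},
\]
where the block $\p_\eta\varphi(x,\eta)$ is invertible because $\Phi$ is a fiberwise diffeomorphism.

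Next I would invoke the chain rule for the Clarke derivative (see the appendix \ref{appA} referenced in the paper): since $\Phi$ is $C^1$ and $S$ is Lipschitz, one has
\[
\p\tilde S(x,\eta)=D\Phi(x,\eta)^{T}\,\p S\bigl(\Phi(x,\eta)\bigr).
\]
Using the block form of $D\Phi^{T}$, an element $(y,0)\in \p\tilde S(x,\eta)$ is exactly a pair $(a,b)\in\p S(x,\varphi(x,\eta))$ satisfying
\[
y=a+\p_x\varphi(x,\eta)^{T}b,\qquad 0=\p_\eta\varphi(x,\eta)^{T}b.
\]
The second equation forces $b=0$ (by invertibility of $\p_\eta\varphi$), hence $a=y$, and therefore $(y,0)\in\p S(x,\varphi(x,\eta))$. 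Conversely, if $(y,0)\in \p S(x,\eta')$ for some $\eta'$, set $\eta:=\varphi(x,\cdot)^{-1}(\eta')$; then $D\Phi(x,\eta)^{T}(y,0)=(y,0)$, so $(y,0)\in \p\tilde S(x,\eta)$.

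These two implications show that the set defined by $\tilde S$ via Definition \ref{nonse} equals the set defined by $S$, which is the desired invariance. The main technical point — and the one I would state carefully — is that the chain rule for the Clarke derivative holds with \emph{equality} (not just inclusion) when the inner map is $C^1$; this is precisely what lets us pass from the generating set for $\tilde S$ back to the generating set for $S$, and it is also what makes the appearance of the ``full'' subdifferential $\p S$ (rather than a partial one) essential in Definition \ref{nonse}, as the footnote after that definition already warned.
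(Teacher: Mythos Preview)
Your proof is correct and follows essentially the same route as the paper: both apply the Clarke chain rule (Lemma~\ref{B12}) to $S\circ\Phi$, read off the block structure of $D\Phi$, and use the invertibility of $\p_\eta\varphi$ to see that the $\eta$-component of the subgradient vanishes if and only if it did before the change of variables. Your explicit emphasis that the chain rule holds with \emph{equality} when the inner map is $C^1$ is exactly the point that makes the argument go through in both directions.
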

 
 \begin{proof}
 If $\Phi(x,\eta')=\big(x,\phi(x,\eta')\big)$ is a fiberwise
 diffeomorphism of $\R^d\times \R^k$, and $S':=S\circ\Phi$, then the
 chain rule (see \ref{appA}, Lemma~\ref{B12}) yields
 \[
 \p S'(x,\eta')=\textstyle\left\{\left(y+\zeta
 \frac{\p}{\p x}\phi(x,\eta'),\zeta \frac{\p}{\p\eta'}\phi(x,\eta')\right)\Big|(y,\zeta)\in \p S\big(x,\phi(x,\eta')\big)\right\};
 \]
 as $\eta'\mapsto \phi(x,\eta')$ is a diffeomorphism, it does follow that the two conditions
 \[
 \exists \eta\in\R^k: (y,0)\in \p S(x,\eta)\quad\hbox{and}\quad\exists \eta'\in\R^k: (y,0)\in \p S'(x,\eta')
 \]
 are equivalent.
 \end{proof}
 
 We are now ready to consider G.F.Q.I.'s for the elements of
 \[
 \tilde{\mathcal{L}}:= \{L=\vp(\p v),\quad v\in C^{\Lip}(\R^d),
 \,\vp\in Ham_c(T^*\R^d)\}:
 \]
 
 \begin{prop}
 \label{propun39}
 If $H:[0,T]\times T^*\R^d\to\R$ is $C^2$ and has compact support, then for each $v\in C^{\Lip}(\R^d)$\/, the generating family of $L=\vp_H^{s,t}(\p v)\in\tilde{\mathcal{L}}$ given by \eref{nnn}, namely
 \[
 S(x;\eta)= v(x_0)+ \sum_{0\leq i<N} \phi_H^{t_i,t_{i+1}}(x_{i+1},y_i)+ \sum_{0\leq i<N}(x_{i+1}-x_i)y_i\/,
 \]
 where $x_N:=x$, $\eta:=\big((x_i,y_i)\big)_{0\leq i<N}$\/,  is ``quadratic at infinity'' in the following sense: let
 \[
 Q(\eta):=-x_{N-1}y_{N-1}+\sum_{0\leq i<N-1}(x_{i+1}-x_i)y_i,
 \]
 the Lipschitz constant of each $S(x,\cdot)-Q:(T^*\R^d)^N\to\R$ is bounded, uniformly with respect to $x$ on each compact subset of $\R^d$\/.
 
 Hence, for each compact $K\subset\R^k$\/, if $\theta\in
 C^\infty_c(\R^d,[0,1])$ equals $1$ in a neighbourhood of $0$\/,
 there exists a positive constant $a_K$ such that the function \be
 \label{equndix}\fl S_K(x;\eta)=\psi_K(x;\eta)+Q(\eta)\/,\hbox{ where }
 \psi_K(x;\eta):=\textstyle\theta\left(\frac{\eta}{a_K}\right)\big(S(x,\eta)-Q(\eta)\big)\/,\;x\in
 K. \ee is a G.F.Q.I. of $L_K:=L\cap\pi^{-1}(K)$ in that sense.
 \end{prop}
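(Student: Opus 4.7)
Expand $S-Q$ via a telescoping identity to read off the Lipschitz constant; then localise the critical set of $S$ over $K$ and choose $a_K$ so large that the nondegeneracy of $Q$ kills any critical point the cutoff might create. Using $x_N = x$,
\[
\sum_{0\le i<N}(x_{i+1}-x_i)\,y_i \;=\; (x-x_{N-1})\,y_{N-1} + \sum_{0\le i<N-1}(x_{i+1}-x_i)\,y_i \;=\; x\,y_{N-1} + Q(\eta),
\]
so
\[
S(x,\eta) - Q(\eta) \;=\; v(x_0) + x\,y_{N-1} + \sum_{0\le i<N}\phi_H^{t_i,t_{i+1}}(x_{i+1},y_i).
\]
The Clarke subdifferential of each piece in $\eta$ is bounded: $v$ contributes $\Lip(v)$; $x\,y_{N-1}$ contributes $|x|$ in the $y_{N-1}$-direction; each $\phi_H^{t_i,t_{i+1}}$ is $C^2$ with compact support (Lemma~\ref{1.1} applied to $H$), hence has bounded gradient. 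The total Lipschitz constant of $S(x,\cdot)-Q$ is therefore bounded uniformly for $x\in K$, which is the first assertion. I would also record the growth estimate $|S(x,\eta)-Q(\eta)|\le C_K(1+|\eta|)$ for use below.

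Next I would localise the critical set. By Proposition~\ref{prop134}, $0\in\p_\eta S(x,\eta)$ with $x\in K$ means that $\eta=(x_i,y_i)_{0\le i<N}$ consists of the intermediate points of a broken trajectory of $\vp_H$ starting on $\p v$ and ending in $\pi^{-1}(K)$. Since $v$ is globally Lipschitz, $\p v\subset\R^d\times\overline{B}(0,\Lip v)$; since $H$ has compact support, each $\vp_H^{t_i,t_{i+1}}$ is the identity outside a fixed compact. Consequently the set of such trajectories is bounded in $(T^*\R^d)^N$, say contained in $\overline{B}(0,R_K)\subset\R^k$. Pick $r_0,r_1>0$ with $\theta\equiv 1$ on $\overline{B}(0,r_0)$ and $\mathrm{supp}\,\theta\subset\overline{B}(0,r_1)$. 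Finally note that $Q$ is a nondegenerate quadratic form: the substitution $z_i := x_{i+1}-x_i$ for $i<N-1$, $z_{N-1}:=-x_{N-1}$ turns $Q$ into the pairing $\sum z_i y_i$, so $\nabla Q$ is a linear isomorphism and $|\nabla Q(\eta)|\ge c|\eta|$ for some $c>0$.

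For the cutoff estimate, the product rule for Clarke subdifferentials (Lemma~\ref{B12}) gives
\[
\p_\eta\psi_K(x,\eta) \;\subset\; \tfrac{1}{a_K}(\nabla\theta)(\eta/a_K)\,\bigl(S(x,\eta)-Q(\eta)\bigr) + \theta(\eta/a_K)\,\p_\eta(S-Q),
\]
which combined with the growth estimate yields $\|\p_\eta\psi_K\|\le L_K$, independent of $a_K$ once $a_K\ge 1$. Choose $a_K$ so large that $r_0 a_K > R_K$ and $c\,r_0\,a_K > L_K$. Then: on $\{|\eta|\le r_0 a_K\}$, $S_K=S$, so the critical sets coincide; on the annulus $\{r_0 a_K<|\eta|<r_1 a_K\}$ the decomposition $\p_\eta S_K = \nabla_\eta Q + \p_\eta\psi_K$ (valid since $Q$ is $C^\infty$) together with $|\nabla Q|\ge c r_0 a_K > L_K$ forces $0\notin\p_\eta S_K$; on $\{|\eta|\ge r_1 a_K\}$, $S_K=Q$ has only $\eta=0$ as critical point, which lies in the first region. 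Combined with the localisation, the critical set of $S_K(x,\cdot)$ for $x\in K$ equals that of $S(x,\cdot)$, and by Proposition~\ref{prop134} generates $L_K$; moreover $\psi_K=S_K-Q$ has uniformly bounded Lipschitz constant on $K$, so $S_K$ is a G.F.Q.I. of $L_K$. The main obstacle is the transition annulus: one must exclude zeros of the \emph{entire} Clarke subdifferential of $\psi_K + Q$, not merely of a single selector, so it is essential that $Q$ be $C^\infty$ (making $\nabla Q$ add exactly to the Clarke subdifferential of $\psi_K$) and that its coercivity $|\nabla Q|\ge c|\eta|$ beat the uniform bound on $\p_\eta\psi_K$.
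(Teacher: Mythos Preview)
Your argument is correct and follows the same core strategy as the paper: bound $\p_\eta\psi_K$ via the Clarke product rule, then use the coercivity $|\nabla Q(\eta)|\ge c|\eta|$ to rule out critical points in the cut-off region. Two small remarks. First, the product-rule inclusion you invoke is Lemma~\ref{B16}, not Lemma~\ref{B12} (which is the chain rule). Second, your flow-based localisation of the critical set of $S$ over $K$ is valid but unnecessary: the very inequality $|\p_\eta(S-Q)|\le c_K<|\nabla Q(\eta)|$ for $|\eta|>c_K/c$ already bounds the critical set of $S$, so the paper simply reuses the nondegeneracy argument for both $S$ and $S_K$ instead of appealing to the Hamiltonian trajectories. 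Your three-region organisation (with a uniform bound $L_K$ exploiting the compact support of $\nabla\theta$) is a clean variant of the paper's pointwise comparison $\frac{1}{a_K}(b+c|\eta|)+c<\frac12|B^{-1}|^{-1}|\eta|$.
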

 
\begin{proof} Denote $\psi(x,\eta)=S(x,\eta)-Q(\eta)$, and $Q(\eta)=\frac{1}{2}\eta^TB\eta$. For a fixed compact subset $K$, let $c=\max_{x\in K}\Lip(\psi(x,\cdot))$, and assume that $|D\theta|\leq 1$. By Lemma \ref{B16}, 
\beaa \p_{\eta}S_K(x,\eta)&=&\p_{\eta}(\theta(\frac{\eta}{a_K})\psi(x,\eta)+Q(\eta))\\&\subset& \frac{1}{a_K}D\theta(\frac{\eta}{a_K})\psi(x,\eta)+ \theta(\frac{\eta}{a_K})\p_{\eta}\psi(x,\eta)+ DQ(\eta),\eeaa
We have 
\[|\psi(x,\eta)|\leq |\psi(x,0)|+|\psi(x,\eta)-\psi(x,0)|\leq b+ c|\eta|\]
where $b:=\max_{x\in K}|\psi(x,0)|$. Hence,
\[\fl |\frac{1}{a_K}D\theta(\frac{\eta}{a_K})\psi(x,\eta)+ \theta(\frac{\eta}{a_K})\p_{\eta}\psi(x,\eta)|\leq \frac{1}{a_K}(b+c|\eta|)+c\leq \frac{1}{2}|B^{-1}|^{-1}|\eta|< |DQ(\eta)|,\]
when $|\eta|\geq b_K$, for some $b_K$ with $a_K,b_K$ large enough. In addition, we can choose $a_K$, $b_K$ such that for $|\eta|\leq b_K$, $\theta(\frac{\eta}{a_K})=1$. Thus $S_K=S$ for $|\eta|\leq b_K$ and there are no critical points of $S$,$S_K$ outside $\{|\eta|\leq b_K\}$, from which $L_K=\{(x,\p_x S_K(x,\eta))|0\in \p_{\eta} S_K(x,\eta)\}$.
\end{proof}
 
\emph{In the sequel, unless otherwise specified, we consider families $S$ of the form (\ref{nnn}) or (\ref{gfqi}) and families $S_K$ of the form (\ref{equndix})}. The advantage is that  $S$  generates $L$ in the simple sense (\ref{prop134}), which helps to express the properties of minmax $R_S(x)$ in a clear way,  similar to the $C^2$ case.
 
 To study the minmax function $R_S$ for such $S$, we use the extension of classical results in critical point theory to locally Lipschitz functions described in \ref{appA}.
 
 \begin{prop}
 \label{propun44}
 The minmax $R_S(x)$ is well-defined and it is a critical value\footnote{\ref{appA}, Definition~\ref{crit}.} of
 the map $\eta\mapsto S(x,\eta)$\/. For each compact subset $K$ of $\R^d$ and each truncation $S_K$ of $S$ of the form \eref{equndix} generating $L_K$\/, we have that $R_S(x)=R_{S_K}(x)$ for $x\in K$\/.
 \end{prop}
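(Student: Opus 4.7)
The plan is to use the truncation $S_K$---a genuine Lipschitz G.F.Q.I.\ by Proposition~\ref{propun39}---as a bridge to $S$, reducing every statement about $S$ over $\pi^{-1}(K)$ to the corresponding statement about $S_K$. Fix $x\in K$. The first task is to make sense of the sublevel-set homology used to define $R_S(x)$. On the ball $\{|\eta|\leq b_K\}$, $S(x,\cdot)$ and $S_K(x,\cdot)$ coincide by construction~\eref{equndix}; outside, $\psi:=S(x,\cdot)-Q$ is Lipschitz with a constant $c$ uniform for $x\in K$, while $|DQ(\eta)|$ grows linearly in $|\eta|$. A short computation shows $\langle\zeta,-DQ\rangle<0$ for every $\zeta\in\p_{\eta}S(x,\eta)$ whenever $|\eta|$ is large enough, so integrating a suitable Lipschitz pseudo-gradient in the $-DQ$-direction produces a strong deformation retract of the pair $(S_x^a,S_x^{-a})$ onto $(Q^a,Q^{-a})$ for $a$ large; the same applies to $S_K$. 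Therefore
\[
H_{k_\infty}\bigl(S_x^\infty,S_x^{-\infty};\mathbb{Z}_2\bigr)\simeq H_{k_\infty}\bigl(Q^\infty,Q^{-\infty};\mathbb{Z}_2\bigr)\simeq\mathbb{Z}_2,
\]
the generator $A$ is canonical, and both $R_S(x)$ and $R_{S_K}(x)$ are well-defined as inf-maxima of $S$ and $S_K$ respectively.

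Next, I would show that $R_S(x)$ is a critical value of $\eta\mapsto S(x,\eta)$ in Clarke's sense, by contradiction via the Lipschitz deformation lemma from~\ref{appA}. If $c:=R_S(x)$ were a regular value, there would exist $\varepsilon>0$ and a continuous deformation $h:[0,1]\times\R^k\to\R^k$, equal to the identity off a neighbourhood of the strip $\{c-\varepsilon\leq S(x,\cdot)\leq c+\varepsilon\}$, pushing $S_x^{c+\varepsilon}$ into $S_x^{c-\varepsilon}$. Applying $h(1,\cdot)$ to a descending cycle $\sigma$ with $\max_{|\sigma|}S(x,\cdot)<c+\varepsilon$ would yield a cycle representing the same homology class $A$ with $\max\leq c-\varepsilon$, contradicting the definition of $R_S(x)$ as an infimum. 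The Palais--Smale-type compactness required by the deformation lemma is furnished by Step~1: for $|\eta|$ large, every element of $\p_\eta S(x,\eta)$ has norm at least $|DQ(\eta)|-c\to\infty$, so sequences with bounded value and vanishing subgradient are precompact.

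The equality $R_S(x)=R_{S_K}(x)$ for $x\in K$ is then immediate: since $S=S_K$ on $\{|\eta|\leq b_K\}$ and every descending cycle of either function can be pushed into this ball along the same pseudo-gradient flow without raising its maximum, the two inf-maxima coincide.

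The main obstacle is Step~2: constructing a locally Lipschitz pseudo-gradient vector field bounded below in norm on a neighbourhood of the level set $\{S(x,\cdot)=c\}$ that avoids the Clarke-critical set and integrates to a continuous deformation with the required properties. This is precisely what the machinery of Lipschitz critical point theory collected in~\ref{appA} provides; once such a flow is in hand, the minmax argument runs in parallel with the smooth case.
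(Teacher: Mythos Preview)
Your proposal is correct and follows essentially the same approach as the paper: both arguments establish the Palais--Smale condition from the fact that $S(x,\cdot)-Q$ is Lipschitz, invoke the Lipschitz deformation lemma (Theorem~\ref{def1}) to derive a contradiction if $R_S(x)$ were a regular value, and compare $R_S$ with $R_{S_K}$ by pushing descending cycles along the gradient flow of $Q$ into the region $\{|\eta|\leq b_K\}$ where $S$ and $S_K$ agree. Your version is simply more explicit about the well-definedness of the sublevel-set homology, which the paper leaves implicit in its definition of G.F.Q.I.
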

 
 \begin{proof}
 By Proposition~\ref{propun39}, $f(\eta):=S(x,\eta)=\psi(\eta)+Q(\eta)$ with $\psi$ Lipschitzian and $Q$ a nondegenerate quadratic form. Hence, $f$ satisfies the P.S. condition (\ref{appA}, Example~\ref{exB7}). If $c=R_S(x)$ were not a critical value,  the flow $\vp_V^t$ of  Theorem~\ref{def1} in \ref{appA} would deform the descending cycles in $f^{c+\ep}$ into descending cycles in $f^{c-\ep}$, hence the contradiction $c=\inf\max_{\sigma} f\leq c-\ep$\/.
 
 To see that $R_S|_K=R_{S_K}$\/, just notice that every descending
 cycle $\sigma$ of $S(x,\cdot)$ or $S_K(x,\cdot)$\/, $x\in K$\/, can
 be deformed into a common descending cycle $\sigma'$ with $\max
 S\big(x,\sigma'(\cdot)\big)=\max S_K\big(x,\sigma'(\cdot)\big)$ by
 using the gradient flow of $Q$\/, suitably truncated.
 \end{proof}
 
 \goodbreak
 \begin{prop}
 \label{lemun47}
 The minmax $R_S(x)$ is a locally Lipschitz function.
 \end{prop}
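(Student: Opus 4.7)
The plan is to reduce to a truncated family $S_K$, establish a uniform Lipschitz bound of $S_K$ in $x$ along the $\eta$-fibres, and then transfer this bound to the minmax via the monotonicity argument underlying Lemma~\ref{coi}.

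Fix $x_0 \in \R^d$ and let $K$ be a compact neighbourhood of $x_0$. By Propositions~\ref{propun39} and~\ref{propun44}, $R_S = R_{S_K}$ on $K$, where
\[
S_K(x,\eta) = \theta(\eta/a_K)\bigl(S(x,\eta)-Q(\eta)\bigr) + Q(\eta).
\]
Inspecting formula (\ref{nnn}), $x$ enters $S(x,\eta)$ only through the last block $\phi_H^{t_{N-1},t_N}(x,y_{N-1}) + x\,y_{N-1}$, and $Q$ does not depend on $x$. Since $H$ has compact support, $\phi_H^{t_{N-1},t_N}$ has bounded derivatives, while the cutoff $\theta(\eta/a_K)$ confines $y_{N-1}$ to a ball whose radius depends only on $K$. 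Hence there is a constant $L_K$ with
\[
|\p_x S_K(x,\eta)| \le L_K, \qquad x\in K,\ \eta\in \R^k,
\]
so that $|S_K(x,\eta) - S_K(x',\eta)| \le L_K |x-x'|$ for all $x,x'\in K$ and all $\eta\in\R^k$.

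For each fixed $x\in K$ the map $\eta\mapsto S_K(x,\eta)$ is a bounded (not merely Lipschitz) perturbation of the nondegenerate quadratic form $Q$, with bound uniform in $x\in K$. Consequently the relative sublevel pairs $(S_{K,x}^\infty, S_{K,x}^{-\infty})$ are all homotopy equivalent to $(Q^\infty, Q^{-\infty})$ by deformations that can be chosen uniformly in $x$, and their $k_\infty$-generator is represented by a common family of descending cycles $\sigma$ valid for every $x\in K$. For such a $\sigma$, the fibrewise estimate above gives $\max_{|\sigma|} S_K(x,\cdot) \le \max_{|\sigma|} S_K(x',\cdot) + L_K|x-x'|$; taking the infimum over $\sigma$ and interchanging the roles of $x$ and $x'$ yields
\[
|R_{S_K}(x) - R_{S_K}(x')| \le L_K |x - x'|,
\]
which, combined with $R_S|_K = R_{S_K}$, shows that $R_S$ is locally Lipschitz.

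The delicate point I expect is the cycle-comparison step: a priori the descending-cycle class depends on $x$ through the sublevel sets of $S_K(x,\cdot)$, so to apply the $C^0$-monotonicity of the inf-max one must argue that a single class of representatives works for all $x\in K$. The uniform boundedness of $\psi_K = S_K - Q$ in $(x,\eta)$---which is exactly what truncating by $\theta$ provides---is what makes this coherent choice possible, and is the reason for passing from $S$ to $S_K$ in the first place rather than trying to Lipschitz-bound $S$ directly.
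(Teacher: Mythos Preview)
Your proof is correct and follows essentially the same route as the paper: reduce to the truncated family $S_K$, use that $\psi_K=S_K-Q$ is compactly supported (hence Lipschitz in $x$ uniformly in $\eta$), and transfer the bound to the minmax via the $C^0$-monotonicity argument of Lemma~\ref{coi}. The ``delicate point'' you flag is handled more directly in the paper: since $\psi_K$ has compact support in $\eta$, the far sublevel sets of $S_K(x,\cdot)$ are literally those of $Q$ for every $x\in K$, so a descending cycle for one $x$ is automatically a descending cycle for any other, and the paper simply takes an $\epsilon$-almost-optimal cycle for $x$ and uses it as a test cycle for $x'$.
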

 
 \begin{proof}
 Let $K\subset\R^d$ be compact. By Proposition~\ref{propun44}, we have that $R_S|_K=R_{S_K}$\/, where $S_K:K\times\R^k\to\R$ writes $S(x,\eta)=\psi_K(x,\eta)+Q(\eta)$ with $Q$ a nondegenerate quadratic form and $\psi_K$ a compactly supported Lipschitz function. Given $x,x'\in K$, for all $\ep>0$, there exists a descending cycle $\bar{\sigma}$ such that $\max_{\eta\in \bar{\sigma}} S_K(x,\eta)\leq R_{S}(x)+ \ep$; if $\max_{\eta\in
 \bar{\sigma}} S_K(x',\eta)$ is reached at $\bar{\eta}$, then
 \beaa
 R_{S}(x')-R_{S}(x)&\leq& S_K(x',\bar{\eta})-S_K(x,\bar{\eta})+
 \ep=\psi_K(x',\bar{\eta})-\psi_K(x,\bar{\eta})+\ep
 \\
 &\leq& \Lip(\psi_K)|x-x'|+\ep \/.
 \eeaa
 If we let $\ep\to 0$ and exchange $x$ and $x'$, we obtain
 \[
 |R_S(x)-R_S(x')|\leq \Lip(\psi_K)|x-x'|,
 \]
 which proves our result.
 \end{proof}
 
 \begin{prop}
 The sets $C(x)=\{\eta\mathop{|} 0\in \p_{\eta} S(x,\eta), S(x,\eta)=R_S(x)\}$ are compact\footnote{\ref{appA}, Example~\ref{exB7}.} and the set-valued map \emph{(``correspondence'')} $x\mapsto C(x)$ is \emph{upper semi-continuous}:  for every convergent sequence $(x_k,\eta_k)\to (x,\eta)$ with $\eta_k\in C(x_k)$, one has $\eta\in C(x)$\/. In other words, the graph $C=\{(x,\eta)\mathop{|}\eta\in C(x)\}$ of the correspondence is closed.
 \end{prop}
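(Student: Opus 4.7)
The proof splits naturally into showing (i) each $C(x)$ is compact and (ii) the graph of the correspondence is closed; both reduce to the ``quadratic at infinity'' structure of $S$ from Proposition~\ref{propun39} combined with the standard closure properties of Clarke's generalized gradient recalled in \ref{appA}.

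For the compactness of $C(x)$, I would fix $x$ and a compact neighborhood $K$ of $x$. The argument in the proof of Proposition~\ref{propun39} already establishes that every $\eta$ with $0 \in \partial_\eta S(x,\eta)$ must lie in the ball $\{|\eta|\leq b_K\}$: outside it $|DQ(\eta)|$ strictly dominates the Lipschitz bound on $\partial_\eta\psi_K(x,\cdot)$, uniformly for $x\in K$. Hence $C(x)$ is bounded. For closedness, take $\eta_n\in C(x)$ with $\eta_n\to\eta$. Continuity of $\eta\mapsto S(x,\eta)$ preserves the equality $S(x,\eta)=R_S(x)$ in the limit, and the standard closed-graph property of the Clarke subdifferential of a Lipschitz function (\ref{appA}) preserves $0\in\partial_\eta S(x,\eta)$. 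Thus $\eta\in C(x)$, so $C(x)$ is closed and bounded, hence compact.

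For the closed-graph property of $x\mapsto C(x)$, take a convergent sequence $(x_k,\eta_k)\to(x,\eta)$ with $\eta_k\in C(x_k)$. Joint continuity of $S$ together with continuity of $R_S$---which is in fact locally Lipschitz by Proposition~\ref{lemun47}---yields
\[
S(x,\eta)=\lim_{k\to\infty}S(x_k,\eta_k)=\lim_{k\to\infty}R_S(x_k)=R_S(x).
\]
Next, from $0\in\partial_\eta S(x_k,\eta_k)$ for every $k$ I would pass to the limit using the joint upper semi-continuity of the set-valued map $(x,\eta)\mapsto \partial_\eta S(x,\eta)$, i.e.\ closedness of its graph, to conclude $0\in\partial_\eta S(x,\eta)$. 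Hence $\eta\in C(x)$, as required.

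The only delicate point is the last one: I need the Clarke partial subdifferential in $\eta$ to have closed graph \emph{jointly} in $(x,\eta)$, not merely in $\eta$ for fixed $x$. This is standard, because $S$ is locally Lipschitz jointly in $(x,\eta)$ with a partial-$\eta$ Lipschitz constant bounded uniformly in $x$ on compact sets (again by Proposition~\ref{propun39}); the limsup definition of the generalized directional derivative recalled in \ref{appA} then yields joint upper semi-continuity of $\partial_\eta S$ in the usual way. Once this is granted, both conclusions of the proposition follow directly from the ingredients already assembled.
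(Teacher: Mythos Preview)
Your proof is correct and follows essentially the same strategy as the paper: compactness via the quadratic-at-infinity structure (the paper simply cites Example~\ref{exB7}), and the closed-graph property via continuity of $S$ and $R_S$ together with upper semi-continuity of the Clarke derivative. The one place where the paper is slightly slicker is precisely the point you flag as delicate. Rather than arguing directly that the \emph{partial} subdifferential $(x,\eta)\mapsto\partial_\eta S(x,\eta)$ has closed graph jointly in $(x,\eta)$, the paper observes that because $S$ is $C^1$ in $x$ one has the product decomposition $\partial S(x,\eta)=\partial_x S(x,\eta)\times\partial_\eta S(x,\eta)$; then $(\partial_x S(x_k,\eta_k),0)\in\partial S(x_k,\eta_k)$, and the standard closed-graph property of the \emph{full} Clarke derivative (Proposition~\ref{lemB11}) immediately gives $(\partial_x S(x,\eta),0)\in\partial S(x,\eta)$, hence $0\in\partial_\eta S(x,\eta)$. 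This bypasses the need to re-justify joint upper semi-continuity of the partial derivative, reducing everything to the already-stated appendix result.
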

 
 \begin{proof}
 Let $(x_k,\eta_k)\to (x,\eta)$ with $\eta_k\in C(x_k)$\/; then, as
 $S$ is $C^1$ in $x$, $\p S=\p_x S\times \p_{\eta}S$. Now $\p
 S:(x,\eta)\mapsto \p_x S\times\p_{\eta}S$ is upper semi-continuous
 (\ref{appA}, Proposition~\ref{lemB11}), the limit
 $\big(\p_x S(x,\eta),0\big)$ of the sequence $\big(\p_x
 S(x_k,\eta_k),0\big)\in\p S(x_k,\eta_k)$ belongs to $\p
 S(x,\eta)$\/, hence $0\in \p_{\eta} S(x,\eta)$\/; as the continuity
 of  $S$ and $R_S$ implies that $S(x_k,\eta_k)\to S(x,\eta)$ and
 $R_S(x_k)\to R_S(x)$\/, this proves $\eta\in C(x)$\/.
 \end{proof}

 \begin{lem}
 \label{simd}
 Given any $\delta>0$, there exists an $\ep>0$ such that
 \[
 R_S(x)=\inf_{\sig\in\Sigma_{\ep}}\max_{\sig\cap C_{\delta}(x)}S(x,\eta)
 \]
 where $\Sigma_{\ep}=\{\sig\mathop{|} \max_{\sig} S(x,\eta)\leq R_S(x)+\ep\}$ and $C_{\delta}(x)=B_{\delta}(C(x))$
 denotes the $\delta$-neighborhood of the critical set $C(x)$.
 \end{lem}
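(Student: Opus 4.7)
The strategy is to handle the two inequalities separately. The easy direction
\[
\inf_{\sig\in\Sigma_{\ep}}\max_{\sig\cap C_{\delta}(x)}S(x,\eta)\leq R_S(x)
\]
is immediate from the definition of $R_S(x)$ as an infimum: for any $\gamma\in(0,\ep)$, pick a descending cycle $\sig$ with $\max_{\sig}S(x,\cdot)\leq R_S(x)+\gamma$; then $\sig\in\Sigma_{\ep}$ and $\max_{\sig\cap C_{\delta}(x)}S(x,\cdot)\leq R_S(x)+\gamma$, so letting $\gamma\to 0$ concludes.

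For the reverse inequality, I plan to argue by contradiction using a cutoff pseudo-gradient deformation in the style of Theorem~\ref{def1} of \ref{appA}. A first step, established via the Palais--Smale condition for $f:=S(x,\cdot)=\psi(x,\cdot)+Q$ (Proposition~\ref{propun39}, Example~\ref{exB7}) and the upper semi-continuity of $\p f$, is the existence of $\alpha>0$ and $\ep_0>0$ with
\[
\mathrm{dist}\bigl(0,\p_{\eta}S(x,\eta)\bigr)\geq\alpha\quad\text{for every }\eta\notin C_{\delta/2}(x)\text{ with }|f(\eta)-R_S(x)|\leq\ep_0.
\]
Otherwise, a sequence $\eta_n\notin C_{\delta/2}(x)$ with $f(\eta_n)\to R_S(x)$ and $\xi_n\in\p_\eta f(\eta_n)$, $|\xi_n|\to 0$ would, by P.S., admit a subsequence converging to some $\eta_\star\in C(x)\setminus C_{\delta/2}(x)$, a contradiction.

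Set $\ep:=\min(\ep_0/2,\alpha\delta/8)$. Suppose for contradiction that some $\sig\in\Sigma_\ep$ satisfies $a:=\max_{\sig\cap C_\delta(x)}f<R_S(x)$ (with the convention $a=-\infty$ when the intersection is empty). Set $\eta_0:=(R_S(x)-a)/2$ if $a$ is finite, $\eta_0:=\ep_0$ otherwise, and build---using the Clarke-theoretic construction of \ref{appA}---a compactly supported Lipschitz pseudo-gradient vector field $V$ with $|V|\leq 1$, vanishing on $\{f\leq a+\eta_0\}\cup C_{\delta/2}(x)$, and with $f$ decreasing at rate $\geq\alpha/2$ along its flow on $\{a+\eta_0\leq f\leq R_S(x)+\ep_0\}\setminus C_{\delta/2}(x)$. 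Let $\tilde\sig:=\vp_V^{\delta/2}(\sig)$; since $V$ has compact support this is a descending cycle in the same homology class. For $\eta\in\sig$ with $f(\eta)\leq a+\eta_0$ the trajectory is constant and $f(\tilde\eta)\leq a+\eta_0<R_S(x)$. For $f(\eta)>a+\eta_0>a$, the hypothesis forces $\eta\notin C_\delta(x)$, and since $|V|\leq 1$ while $T=\delta/2$, the trajectory cannot enter $C_{\delta/2}(x)$, whence
\[
f(\tilde\eta)\leq\max\{a+\eta_0,\ f(\eta)-\alpha\delta/4\}\leq\max\{a+\eta_0,\ R_S(x)+\ep-\alpha\delta/4\}<R_S(x),
\]
by the choice of $\ep$. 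Thus $\max_{\tilde\sig}f<R_S(x)$, contradicting the minmax definition of $R_S(x)$.

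The main obstacle is the careful coordination of the five parameters $\delta,\alpha,\ep_0,\ep,\eta_0$: although $\eta_0$, and hence the level-set cutoff defining $V$, depends on the particular cycle $\sig$, the lower gradient bound $\alpha$ --- and therefore the threshold $\ep$ --- depends only on $\delta$, which is what the statement requires. A secondary technical point is verifying that $V$ and its time-$\delta/2$ flow respect the quadratic-at-infinity structure of $S$, so that $\tilde\sig$ is a bona fide descending cycle representing the same class $A$; this is standard once $V$ has been chosen with compact support, as permitted by the cutoffs above.
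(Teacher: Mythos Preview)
Your argument is correct in spirit and follows essentially the same route as the paper: both establish the nontrivial inequality by deforming a cycle $\sigma\in\Sigma_\ep$ along a pseudo-gradient flow that pushes $\sigma$ below the level $c=R_S(x)$ except possibly on $C_\delta(x)$. The difference is one of packaging. The paper's proof is a one-line appeal to the second deformation lemma (Theorem~\ref{def2} of \ref{appA}): taking $N=C_\delta(x)$ there yields $\ep>0$ and a flow with $\vp_V^1(S_x^{c+\ep}\setminus C_\delta(x))\subset S_x^{c-\ep}$; since the flow does not increase $S_x$, one gets $\max_{\vp_V^1(\sigma)}S_x\leq\max\bigl(c-\ep,\ \max_{\sigma\cap C_\delta(x)}S_x\bigr)$, forcing $\max_{\sigma\cap C_\delta(x)}S_x\geq c$. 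You instead rebuild the deformation by hand with explicit constants $\alpha,\ep_0,\ep$ and a time-$\delta/2$ flow bounded by speed $1$; this is more self-contained and makes the dependence of $\ep$ on $\delta$ visible, at the cost of length.

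One small point to tighten: you ask for $V$ with decrease rate $\geq\alpha/2$ on $\{a+\eta_0\leq f\leq c+\ep_0\}\setminus C_{\delta/2}(x)$, but your Palais--Smale argument only gives the gradient lower bound on the narrower strip $\{|f-c|\leq\ep_0\}\setminus C_{\delta/2}(x)$. If $a+\eta_0<c-\ep_0$ (which nothing prevents, since there may be critical points of $S_x$ at levels below $c-\ep_0$ outside $C_{\delta/2}(x)$), the requested $V$ need not exist. The fix is painless: cut off $V$ below the level $c-\ep_0$ rather than $a+\eta_0$ (the parameter $\eta_0$ is then unnecessary), and your final estimate becomes $f(\tilde\eta)\leq\max\{c-\ep_0,\,c+\ep-\alpha\delta/4\}<c$, valid uniformly in $\sigma$.
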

 
 \begin{proof}
 This is a direct consequence of the deformation lemma (\ref{appA}, Theorem~\ref{def2}) for $S_x:=S(x,\cdot)$: for $\delta>0$, and $c=R_S(x)$, there exist $\ep>0$ and $V$ such that $\vp_V^1(S_x^{c+\ep}\setminus C_{\delta}(x))\subset S_x^{c-\ep}$. In particular, we remark that for  $\sigma\in \Sigma_{\ep}$, the intersection $\sigma\cap C_{\delta}(x)$ is nonempty, otherwise, the flow $\vp_V^1$ may take $\sigma$ to a descending cycle $\sigma'=\vp_V^1(\sigma)$ such that $\max_{\eta\in \sigma'}S_x(\eta)\leq R_S(x)-\ep$, contradiction with the definition of minmax.
 \end{proof}
 
 \begin{rem} When $S$ is $C^2$, the $S_x$'s are generically Morse functions: indeed, $S_x$ is Morse if and only if $x$ is a regular value of the projection $\pi: L\to M,\,(x,p)\mapsto x$, whose regular values, by Sard's theorem and the compactness of $\hbox{Crit}(S_x)$, form an open set of full measure. In this case, $S_x^c$ is indeed a deformation retract of $S_x^{c+\ep}$ for $\ep>0$ small enough, hence $\inf\max$ deserves its name ``minmax'', that is, there exists a descending cycle $\sigma$ such that, $R_S(x)=\max_{\sigma} S(x,\eta)=\max_{\sigma\cap C(x)} S(x,\eta)$.
 \end{rem}
 
 \begin{prop}\label{par} The generalized derivative of $R_S$
 satisfies
 \be
 \label{pss}\p R_S(x)\subset \conv\{\p_x S(x,\eta)\mathop{|}\eta\in C(x)\}.
 \ee
 \end{prop}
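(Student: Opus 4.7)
The plan is to use Rademacher's theorem together with Clarke's characterization of the generalized gradient of a Lipschitz function (recalled in~\ref{appA}): $\p R_S(x)$ equals the closed convex hull of limits $\lim_n \nabla R_S(y_n)$ along sequences $y_n\to x$ at which $R_S$ is (Fr\'echet) differentiable. I will first show that at every such differentiability point $y$, the ordinary gradient lies in $K(y):=\conv\{\p_x S(y,\eta):\eta\in C(y)\}$; I will then pass to the limit $y_n\to x$ using the upper semi-continuity of $C(\cdot)$ proved just above.

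For the pointwise step, the support function characterization of the compact convex set $K(y)$ reduces the claim to proving, for every $v\in\R^d$, the directional bound $\nabla R_S(y)\cdot v\leq M(y,v):=\max_{\eta\in C(y)}\p_x S(y,\eta)\cdot v$. I will obtain this by sharpening Lemma~\ref{simd}: for every small $\delta>0$ there exists $\beta=\beta(\delta)>0$ such that, for every $\alpha\in(0,\beta)$, a descending cycle $\sigma$ can be found with $\max_\sigma S(y,\cdot)\leq R_S(y)+\alpha$ and $\sigma\subset C_\delta(y)\cup\{\eta:S(y,\eta)\leq R_S(y)-\beta\}$. This follows by applying the pseudo-gradient flow $\vp_V^1$ of Theorem~\ref{def2} (designed for threshold $\beta$ and the shrunk neighborhood $C_{\delta/2}(y)$) to any cycle in $\Sigma_\alpha$: the flow pushes $S_y^{R_S(y)+\beta}\setminus C_{\delta/2}(y)$ into $S_y^{R_S(y)-\beta}$ while decreasing $S(y,\cdot)$, and its time can be taken short enough that $\vp_V^1(C_{\delta/2}(y))\subset C_\delta(y)$. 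With such a $\sigma$ and the $C^2$ Taylor expansion of $S$ in $x$ (uniform in $\eta$ over the compact set $\sigma$), splitting $\max_\sigma S(y+tv,\cdot)$ over $\sigma\cap C_\delta(y)$ and $\sigma\setminus C_\delta(y)$ yields, for $t>0$ small,
\[
\frac{R_S(y+tv)-R_S(y)}{t}\leq \frac{\alpha}{t}+\max_{\eta\in C_\delta(y)}\p_x S(y,\eta)\cdot v+o(1).
\]
Choosing $\alpha=t^2$ kills the $\alpha/t$ term; letting $t\to 0^+$ and then $\delta\to 0$ (using continuity of $\p_x S$ in $\eta$ and compactness of $C(y)$) gives $\nabla R_S(y)\cdot v\leq M(y,v)$, hence $\nabla R_S(y)\in K(y)$.

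The passage from $y$ to $x$ is then standard: Carath\'eodory's theorem writes $\nabla R_S(y_n)=\sum_{i=1}^{d+1}\lambda_i^n\p_x S(y_n,\eta_i^n)$ with $\eta_i^n\in C(y_n)$, $\lambda_i^n\geq 0$, $\sum_i\lambda_i^n=1$. By compactness of the critical sets in a neighborhood of $x$, after extraction $\lambda_i^n\to\lambda_i$ and $\eta_i^n\to\eta_i$; upper semi-continuity of $C(\cdot)$ forces $\eta_i\in C(x)$, and continuity of $\p_x S$ in both variables gives $\p_x S(y_n,\eta_i^n)\to\p_x S(x,\eta_i)$. Hence every limit $\lim_n\nabla R_S(y_n)$ lies in $K(x)$, and since $K(x)$ is already compact and convex, $\p R_S(x)\subset K(x)$. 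The main obstacle is the sharpened Lemma~\ref{simd}: one needs $\beta$ to depend only on $\delta$ (and not on $\alpha$), so that $\alpha$ can be taken as small as $t^2$ while $\sigma\setminus C_\delta(y)$ remains uniformly below the critical level; the book-keeping of how the flow moves points in and out of $C_\delta(y)$ is the delicate part, and is what the shrinkage from $\delta$ to $\delta/2$ combined with the finite propagation of the pseudo-gradient flow of~\ref{appA} is designed to handle.
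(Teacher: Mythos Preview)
Your proof follows the same two-step template as the paper: first show $\nabla R_S(y)\in K(y)$ at every differentiability point $y$, then pass to the limit via the upper semi-continuity of $C(\cdot)$. The second step is identical (your explicit use of Carath\'eodory is what the paper leaves implicit). The difference lies in how step one is executed.

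You choose a near-optimal cycle $\sigma$ \emph{at the base point} $y$ and compare $S(y+tv,\cdot)$ with $S(y,\cdot)$ on it; for this you need your ``sharpened Lemma~\ref{simd}'', confining $\sigma$ to $C_\delta(y)\cup\{S(y,\cdot)\leq c-\beta\}$ with $\beta$ independent of the tolerance $\alpha$. The paper avoids this extra lemma by a switch of viewpoint: it picks a near-optimal cycle $\sigma_\lambda$ \emph{at the nearby point} $x_\lambda=\bx+\lambda y$ (tolerance $\lambda^2$), observes that uniform continuity of $S_K$ in $x$ makes $\sigma_\lambda$ also belong to $\Sigma_{\ep}$ at $\bx$, and then applies the \emph{unsharpened} Lemma~\ref{simd} at $\bx$ to locate a single point $\eta_\lambda\in\sigma_\lambda\cap C_\delta(\bx)$ with $S(\bx,\eta_\lambda)\geq R_S(\bx)$. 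One then bounds $\lambda^{-1}[R_S(x_\lambda)-R_S(\bx)]$ by $\lambda^{-1}[S(x_\lambda,\eta_\lambda)-S(\bx,\eta_\lambda)]-\lambda$ and uses the mean value theorem in $x$. This is more economical: no strengthening of the deformation argument is needed, and only one point of $\sigma_\lambda$ ever matters.

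Your sharpened lemma is correct, but the justification you sketch is incomplete as stated. Theorem~\ref{def2} in \ref{appA} fixes the time at $1$ and asserts only that $V$ is bounded, with no control on $|V|$ in terms of $\beta$ or $\delta$; so ``its time can be taken short enough that $\vp_V^1(C_{\delta/2}(y))\subset C_\delta(y)$'' does not follow from the lemma as written. To make your argument go through you must open up the construction of the pseudo-gradient (normalise $|V|\le 1$ and rescale time with $\beta$, so the total displacement is at most $2\beta\le\delta/2$). This is standard and fixable, but it is exactly the extra book-keeping that the paper's trick of choosing the cycle at $x_\lambda$ rather than at $\bx$ is designed to bypass.
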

 
 \begin{proof}
 First, we claim that, if $R_S$ is differentiable at $\bx$, then
 \be
 \label{derivable}
 dR_S(\bx)\subset \conv\{\p_x S(\bx,\eta)\mathop{|}\eta\in C(\bx)\}
 \ee
 Take $\delta$ and $\ep$ for $\bx$ as in Lemma \ref{simd}. Consider $K=\overline{B_1(\bx)}$, and $S_K$ obtained in Proposition \ref{propun39}, one can choose a  $\varrho\in (0,1)$ such that for
 $x\in B_{\varrho}(\bx)$,
 \[
 |S_K(x,\cdot)-S_K(\bx,\cdot)|_{C^0}\leq \ep/4\/.
 \]

 Now let $y\in \R^d$ and $\lambda<0$ be small so that $x_{\lambda}:=\bx+ \lambda y\in B_{\varrho}(\bx)$ and $\lambda^2<\ep/4$. Then by Lemma \ref{simd}, for each $x_{\lambda}$, there is a descending cycle $\sigma_{\lambda}$ such that
 \[
 \max_{\sigma_{\lambda}}S(x_{\lambda},\eta)\leq R_S(x_{\lambda})+\lambda^2,
 \]
 then,
 \[
 \max_{\sigma_{\lambda}} S(\bx,\eta)\leq \max_{\sigma_{\lambda}}S(x_{\lambda},\eta)+ \frac{\ep}{4}\leq
 R_S(x_{\lambda})+\frac{\ep}{2}\leq R_S(\bx)+\frac{3\ep}{4}
 \]
 and
 \[
 R_S(\bx)\leq \max_{\sigma_{\lambda}\cap C_{\delta}(\bx)}S(\bx,\eta)=S(\bx,\eta_{\lambda}),\quad \mbox{for\, some\, }\eta_{\lambda}\in \sigma_{\lambda}\cap C_{\delta}(\bx)\/.
 \]
 Hence we have
 \bea
 \label{eqs}
 \lambda^{-1}[R_S(x_{\lambda})-R_S(\bx)]&\leq& \lambda^{-1} [S(x_{\lambda}, \eta_{\lambda}) - S(\bx,\eta_{\lambda})] - \lambda
 \\
 &=& \l \p_x S(x_{\lambda}',\eta_{\lambda}),y\r-\lambda,
 \eea
 where the last equality is given by the mean value theorem for some $x_{\lambda}'$ in the line segment between $\bx$ and $x_{\lambda}$.
 
 Take the $\limsup$ of both sides in the above inequality and let $\delta\to 0$, we get
 \[
 \l d R_S(\bx),y\r   \leq \max_{\eta \in C(\bx)} \l \p_x
 S(\bx,\eta),y \r,\quad \forall y\in \R^d
 \]
 
 Note that this implies that $d R_S(\bx)$ belongs to the
 sub-derivative of the convex function $f(y):=\max_{\eta \in C(\bx)}
 \l \p_x S(\bx,\eta),y \r$ at $v=0$,\footnote{For a
 convex function $f$, the sub-derivative at a point $x$ is the set of
 $\xi$ such that $f(y)-f(x)\geq \l \xi,y-x\r$,$\forall y$} for which
 one can easily calculate
  \[ \p f(0) = \conv \{\p_x S(\bx,\eta): \eta
 \in C(\bx)\}.\] Thus we get (\ref{derivable}). In general, \beaa \p
 R_S(x)&=&\conv\{\lim_{x'\to x} dR_S(x')\}\subset\conv\{\conv
 \lim_{x'\to x} \{\p_x S(x',\eta'),\eta'\in
 C(x')\}\}\\&\subset&\conv\{\p_x S(x,\eta), \eta\in C(x)\}\eeaa by
 the upper-semi continuity of $x\mapsto C(x)$ and the continuity of
 $\p_x S$\/.
 \end{proof}
 
 The formula (\ref{pss}) gives us somehow a {\it generalized graph selector}. While for a classical graph selector, we require that for almost every $x$,
 \[
 dR_S(x)=\p_x S(x,\eta),\quad \mbox{for\,some\,}\eta\in C(x)
 \]
 from which $(x,dR_S(x))\in L$. Following Chaperon, Sikorav \cite{loi,PGS}, this occurs if $S$ is a G.F.Q.I.  of $L=\vp(dv)\in \mathcal{L}$ for $v\in C^2$. Indeed, in this case,  $S_x:=S(x,\cdot)$ is an excellent Morse function for almost every $x$, in which cases $C(x)$ consists of a single point, hence $\p
 R_S(x)=\p_x S(x,\eta)$ for a unique $\eta$, proving that $R_S$ is a true graph selector for $L$. 
\vspace{6pt}

There are still some mysteries for the generalized graph selector. One may ask whether the minmax $R_S$ is also a true graph selector for $L\in \tilde{\mathcal{L}}$. Is it true that, when $R_S$ is differentiable at $x$, one has $\big(x, dR_S(x)\big)\in L$\/, where $L\in \mathcal{L}$ or even $\tilde{\mathcal{L}}$?

 \section{Viscosity solutions and minmax solutions}
 We  look at the solutions of the (H-J) Cauchy problem, assuming that $H\in C^2([0,T]\times T^*\R^d)$ and $v\in C^{\Lip}(\R^d)$ satisfy the condition of finite propagation
 speed. Unless otherwise specified, we assume that $H$ has compact support (as a function on $[0,T]\times T^*\mathbb{T}^d$ when $H$ and $v$ are periodic).
\subsection{Geometric solution and its minmax selector}\label{charat}
 Following the classical geometric method for first order partial differential equations, the Hamilton-Jacobi equation is considered to be a hypersurface in the cotangent bundle $T^*(\R\times \R^d)$. 
 See \cite{Arnold1,Arnold2}.
 
 More precisely, let
 \[
 \mathcal{H}(t,x,e,p)=:e + H(t,x,p),\quad (t,x,e,p)\in T^*(\R \times \R^d)
 \]
 and  at the moment suppose that the initial function $v$ is $C^2$.
 
 \begin{defn}
  Let $\vp_{\mathcal{H}}^s$ denote the Hamiltonian flow of $\mathcal{H}$, which preserves the levels of $\mathcal{H}$, and let
 \[
 \Gamma_{v}=\left\{\left(0,x,-H\big(0,x,dv(x)\big),dv(x)\right)\right\}\,;
 \]
 then, the  \emph{geometric solution} of the Cauchy problem (H-J) is
 \[
L_{\mathcal{H},v}:=\bigcup_{s\in [0,T]} \vp_{\mathcal{H}}^s(\Gamma_{v})\/.
 \] 
It is a Lagrangian submanifold  containing the initial isotropic submanifold $\Gamma_{v}$
 and  contained in the hypersurface
 \[
 \mathcal{H}^{-1}(0)=\{(t,x,e,p)|e + H(t,x,p)=0\}\subset T^*(\R\times \R^d).
 \]
 As every Lagrangian submanifold $L$ of $T^*(\R \times \R^d)$ contained in $\mathcal{H}^{-1}(0)$ is locally invariant by $\vp_{\mathcal{H}}^s$\/, this geometric solution is in some sense maximal.
 \end{defn}

 Writing  $T^*(\R\times \R^d)$ as $T^*\R\times T^*\R^d$, we have $X_{\mathcal{H}}=(1,-\p_t H, X_H)$, and
 \[
 L_{\mathcal{H},v}=\bigcup_{t\in [0,T]}\left\{\left(t,-H\big(t,\varphi_H^{t}(dv)\big),\varphi_H^{t}(dv)\right)\right\}
 \]
 where $\vp_H^t:=\varphi_H^{0,t}$ is the Hamiltonian isotopy generated by  $H$.
 
 \begin{lem}
 Formula (\ref{gfqi}) defines a G.F.Q.I. of $L_{\mathcal{H},v}$.
 \end{lem}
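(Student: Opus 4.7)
The plan is to verify that $S$ from \eref{gfqi}, viewed as a family over the enlarged base $[s,t]\times\R^d$ with fibre variable $\eta$, satisfies the three conditions for being a G.F.Q.I.\ of $L_{\mathcal{H},v}\subset T^*(\R\times\R^d)$: namely, that $0$ is a regular value of $\partial_\eta S$; that the induced map from the critical locus embeds onto $L_{\mathcal{H},v}$; and that $S$ is quadratic at infinity in the sense of Definition~\ref{gfc}.

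I would inherit the regularity condition and the ``spatial'' part of the embedding condition directly from Corollary~\ref{comp3}: for each fixed $\tau\in[s,t]$ the slice $S_\tau:=S(\tau,\cdot)$ is already a G.F.Q.I.\ of $\varphi_H^{s,\tau}(dv)$, so at a critical point $\eta$ the components $(x_i,y_i)$ are the successive images $\varphi_H^{s,\tau_i}(x_0,dv(x_0))$ and $\partial_x S = y_N =: Y$. Since $\partial_\eta^2 S$ already has full rank at such a point, $0$ remains a regular value of the enlarged map $(\tau,x,\eta)\mapsto \partial_\eta S$.

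The substantive step is the computation of $\partial_\tau S$ at a critical point. Differentiating \eref{gfqi} with $\eta$ held fixed and applying Lemma~\ref{dergf} block by block—with the critical-point relations guaranteeing that the source of $\varphi_H^{\tau_i,\tau_{i+1}}$ is $(x_i,y_i)$ and its target is $(x_{i+1},y_{i+1})$—yields, with $\lambda_j:=(t_j-s)/(t-s)$,
\[
\partial_\tau S \;=\; \sum_{i=0}^{N-1}\lambda_i\, H(\tau_i,x_i,y_i)\;-\;\sum_{i=0}^{N-1}\lambda_{i+1}\, H(\tau_{i+1},x_{i+1},y_{i+1}).
\]
Reindexing the second sum by $j=i+1$ makes all interior terms cancel (no conservation of $H$ is needed, only matching coefficients), leaving
\[
\partial_\tau S \;=\; \lambda_0 H(\tau_0,x_0,y_0)-\lambda_N H(\tau,x,Y)\;=\;-H(\tau,x,Y),
\]
since $\lambda_0=0$ and $\lambda_N=1$. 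Combined with $\partial_x S=Y$, this identifies the image of the critical locus under $(\tau,x,\eta)\mapsto (\tau,x,\partial_\tau S,\partial_x S)$ with $\{(\tau,x,-H(\tau,x,Y),Y):(x,Y)\in\varphi_H^{s,\tau}(dv)\}=L_{\mathcal{H},v}$, and the embedding property follows from embeddedness of each slice combined with the transversal $\tau$-direction.

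The quadratic-at-infinity condition is inherited from the lemma immediately following Corollary~\ref{comp3}: writing $S=\psi+Q$ with the nondegenerate form $Q(\eta)=-x_{N-1}y_{N-1}+\sum_{0\leq i<N-1}(x_{i+1}-x_i)y_i$, compact support of $H$ provides uniform $C^2$-bounds on the $\phi_H^{\tau_i,\tau_{i+1}}$ as $\tau$ varies in $[s,t]$, so $\partial_\eta \psi$ is bounded on $K\times\R^k$ for every compact $K\subset[s,t]\times\R^d$. The only real obstacle is the telescoping identity for $\partial_\tau S$; everything else reduces to slice-wise facts already in hand.
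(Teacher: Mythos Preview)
Your proof is correct and follows essentially the same approach as the paper: compute $\partial_\tau S$ at critical points via Lemma~\ref{dergf} and identify it with $-H(\tau,x,Y)$, then combine with the slice-wise fact $\partial_x S=Y$ from Corollary~\ref{comp3}. The only difference is that the paper simplifies by assuming $T<\delta_H$ (so $N=1$, no telescoping needed) and leaves the quadratic-at-infinity and regularity checks implicit, whereas you treat the general $N$ with the telescoping identity and verify all three conditions explicitly; your version is therefore a bit more complete, but the core computation is the same.
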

 
 \begin{proof}
 For simplicity, we may assume that $T\in (0,\delta_H)$, hence that
 \[\fl
 S:[0,T]\times \R^d\times \R^{2d}\to \R,\quad S(t,x,x_0,y_0)=v(x_0)+ xy_0+ \phi_H^t(x,y_0)-x_0y_0\/.
 \]
 
 Let $(x_0,y_0)\in \Sigma_S$, then
 \[\fl
 (\p_t S(t,x,x_0,y_0),\p_x S(t,x,x_0,y_0))=(\p_t
 \phi_H^t(x,y_0),\p_x\phi_H^t(x,y_0))=(-H(t,x,y(t)),y(t))\/,
 \]
 where $(x,y(t))=\vp_H^t(x_0,y_0)$ with $y_0=dv(x_0)$.
 
 Hence
 \[
 \{(t,x,\p_t S(t,x,x_0,y_0),\p_x S(t,x,x_0,y_0))|(x_0,y_0)\in \Sigma_S\}=L_{\mathcal{H},v}
 \]
 \end{proof}
 The {\it wave fronts} of the geometric solution $L_{\mathcal{H},v}$ are then given by
 \[\mathcal{F}^t:=\{(x,S(t,x;\eta))|\frac{\p S}{\p \eta} (t,x;\eta)=0\}\subset J^0(\R^d).\]

 If there exists a $C^1$ function $u:[0,T]\times \R^d\to \R$ such that
 \[
 L=L_{\mathcal{H},v}=\{(t,x,\p_t u(t,x),\p_x u(t,x))\subset T^*([0,T]\times \R^d)\}
 \]
 we say that $L$ is a 1-graph in $T^*([0,T]\times \R^d)$. In this case, $u$ is a global solution of the Cauchy problem of (H-J) equation. In general, $L$ may be the graph of the derivatives of a multi-valued function.

 An equivalent but more economic way to describe the geometric
 solution is to identify (as in the introduction) each  $\vp_{\mathcal{H}}^s(\Gamma_v)$ with
 $\{s\}\times \vp_H^s(dv)$ by the inverse of the map $(t,x,p)\mapsto (t,x,-H(t,x,p),p)$.
 In this way, we also call the union
 \[
 L_{H,v}:=\bigcup_{t\in [0,T]}\{t\}\times \vp_H^t(dv)\subset \R\times
 T^*\R^d
 \]
the {\it geometric solution}.
 
 If we look at the projection of the characteristics, that is, the image of the graph of the solutions $\big\{\big(t,\vp_H^t(x_0,p_0)\big)\big\}_{t\in [0,T]}$, $(x_0,p_0)\in T^*\R^d$, of Hamilton's equations under
 the projection
 \[
 \pi: [0,T]\times T^*\R^d\to \R\times \R^d,\quad (t,x,p)\mapsto (t,x)\/.
 \]
 then $L$ is not a 1-graph when the corresponding  characteristics intersect under the projection. Without ambiguity, we will simply say that the characteristics intersect.\\
 
 \noindent For the initial functions $v\in C^{\Lip}(\R^d)$, set
\[ 
L_{H,v}=\bigcup_{t\in [0,T]}\{t\}\times \vp_H^t(\p
 v):=\bigcup_{t\in [0,T]}\{t\}\times \{\vp_H^t(x,p):p\in \p v(x)\}\/,
 \]
 where $\p$ is Clarke's generalized derivative. We call them {\it generalized geometric solutions}. They are also generated by the G.F.Q.I.'s given by formula \eref{gfqi}. 
 
 \begin{defn} 
 For any time $0\leq s<t\leq T$, we define the minimax operator\footnote{The inclusion $R_H^{s,\tau}\big(C^{\Lip}(\R^d)\big)\subset C^{\Lip}(\R^d)$ is proven in Proposition~\ref{prof} hereafter.}
 \[
 R_H^{s,\tau}:C^{\Lip}(\R^d)\to C^{\Lip}(\R^d),\quad \tau\in [s,t]
 \]
  for the (H-J) equation as
 \[
 R_H^{s,\tau} v(x)=\inf\max_{\eta} S(\tau,x,\eta)
 \]
 where $S:[s,t]\times \R^d\times \R^k\to \R$ is given by (\ref{gfqi}).
 \end{defn}
 
 For completeness, without referring to the uniqueness theorem for G.F.Q.I.'s, we give a proof that the minmax is well-defined independently of the subdivisions.

 \begin{lem}
 \label{ides}
 The minmax $R_S(x)=\inf\max S(x,\eta)$ given by \eref{nnn} or \eref{gfqi} is independent of the subdivision of
 time in the construction of $S$.
 \end{lem}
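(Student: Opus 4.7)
My strategy is to reduce to inserting a single subdivision point and then to rerun the continuity-plus-Sard argument already used in the preceding proposition to prove independence from the truncation.

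Given two subdivisions $\mathcal{T}_0,\mathcal{T}_1$ of $[s,t]$ with mesh less than $\delta_H$, a common refinement $\mathcal{T}$ still has mesh less than $\delta_H$, so by iteration it suffices to treat the case $\mathcal{T}'=\mathcal{T}\cup\{\tau\}$ with $\tau\in(t_i,t_{i+1})$. For $\mu\in[0,1]$ I set $\tau_\mu:=t_i+\mu(\tau-t_i)$, $\mathcal{T}_\mu:=\mathcal{T}\cup\{\tau_\mu\}$, and let $S_\mu$ be the generating family (\ref{nnn}) of $\mathcal{T}_\mu$. All $\mathcal{T}_\mu$ have mesh less than $\delta_H$, and the formula (\ref{nnn}) remains meaningful at $\mu=0$ because $\phi_H^{t_i,t_i}\equiv 0$ (it generates the identity). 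The map $\mu\mapsto S_\mu$ is $C^0$-continuous on compacts, hence by Lemma~\ref{coi}, $\mu\mapsto R_{S_\mu}(x)$ is continuous.

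Part (iii) of Corollary~\ref{comp3} (applied with Clarke's $\p v$ in place of $dv$ when $v$ is only Lipschitz) shows that the critical values of $S_\mu(x,\cdot)$ are determined purely by the Hamiltonian flow from $\{(x_0,p_0):p_0\in\p v(x_0)\}$; in particular the set of such critical values is independent of $\mu$ and, by Sard's theorem, has Lebesgue measure zero. Since, by Proposition~\ref{propun44}, $R_{S_\mu}(x)$ is a critical value, it takes values in this fixed measure-zero subset of $\R$. A continuous function $[0,1]\to\R$ whose image lies in a set of measure zero must be constant (the image is a connected interval of measure zero, hence a point), so $R_{S_\mu}(x)$ is constant in $\mu$.

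It only remains to identify the endpoints. At $\mu=1$ we have $S_1=S_{\mathcal{T}'}$ by construction. At $\mu=0$, writing $S_0$ out with the extra variables $(x_a,y_a)$ and using $\phi_H^{t_i,t_i}\equiv 0$, the degenerate block contributes
\[
(x_a-x_i)y_i+\phi_H^{t_i,t_{i+1}}(x_{i+1},y_a)+(x_{i+1}-x_a)y_a
\]
in place of the $\mathcal{T}$-block $\phi_H^{t_i,t_{i+1}}(x_{i+1},y_i)+(x_{i+1}-x_i)y_i$. The fiberwise linear substitution $(x_a,y_i)\mapsto(\xi_1,\xi_2):=(x_a-x_i,\,y_i-y_a)$, with $y_a$ relabelled as the ``old'' $y_i$ variable, converts $S_0$ into $S_\mathcal{T}+\xi_1\xi_2$. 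Since $\xi_1\xi_2$ is a nondegenerate quadratic form, $S_0$ is a stabilization of $S_\mathcal{T}$ in the sense of operation (c) of Viterbo--Th\'eret, so $R_{S_0}(x)=R_{S_\mathcal{T}}(x)$; combined with the constancy above, this yields $R_{S_\mathcal{T}}(x)=R_{S_{\mathcal{T}'}}(x)$. The main obstacle is precisely this endpoint computation at $\mu=0$---finding the right fiberwise diffeomorphism that exhibits the stabilization structure---the rest being a verbatim replay of the earlier $\mu$-deformation argument.
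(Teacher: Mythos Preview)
Your proposal is correct and follows essentially the same route as the paper: reduce to inserting a single subdivision point via a common refinement, slide that point continuously to an endpoint, use the $\mu$-independence of the critical-value set together with Sard and Lemma~\ref{coi} to get constancy of $R_{S_\mu}(x)$, and identify $S_0$ as a stabilization $S_{\mathcal T}+\xi_1\xi_2$ via the same linear change of fiber variables. The only cosmetic difference is that the paper first writes out the special case $t-s<\delta_H$ with the trivial subdivision before passing to the general refinement step, whereas you go directly to the general one-point insertion.
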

 
 \begin{proof}
 First assume  $t-s<\delta_H$; given $\tau\in (s,t)$, consider the family of subdivisions $\zeta_{\mu}:=\{s\leq s+\mu(\tau-s)<t\}$; then, 
 \[\fl
 S_{\mu}(x;x_0,y_0,x_1,y_1)=v(x_0)+\phi_H^{s,s+\mu(\tau-s)}(x_1,y_0)+(x_1-x_0)y_0+\phi_H^{s+\mu(\tau-s),t}(x,y_1)+(x_2-x_1)y_1\,,
 \]
 where $x_2:=x$, is the generating family defined by (\ref{nnn}) and associated to $\zeta_{\mu}$, $\mu\in (0,1]$. The function $S_{\mu}$ is continuous in $\mu$ and the minmax $R_{S_{\mu}}(x)$ is a critical value of the map $\eta\mapsto S_{\mu}(x;\eta)$ with $\eta:=(x_0,y_0,x_1,y_1)$. By \eref{equncinq}, the set of all such critical values is independent of $\mu$; as it has measure zero by Sard's Theorem, $R_{S_{\mu}}$ is constant for $\mu\in [0,1]$. In particular, letting $x_1':=x_1-x_0$ and $y_0'=y_0-y_1$, we get
 \[
 S_0=S_0(x;(x_0,y_1,x_1',y_0'))=v(x_0)+\phi_H^{s,t}(x_2,y_1)+(x_2-x_0)y_1+x_1'y_0'.
 \]
 It is obtained by adding the quadratic form $x_1'y_0'$ to 
 \[
 S(x;x_0,y_1)=v(x_0)+\phi_H^{s,t}(x_2,y_1)+(x_2-x_0)y_1,
 \] 
 which is the generating family related to $\zeta_0$.
 We conclude that \[R_S(x)=R_{S_0}(x)=R_{S_1}(x).\]
 
 In general, given any two subdivisions $\zeta'$, $\zeta''$ of $[s,t]$ with\footnote[1]{For a subdivision $\zeta=\{t_0<\cdots<t_n\}$, we let $|\zeta|:=\max_{i}|t_{i+1}-t_i|$.} $|\zeta'|,|\zeta''|<\delta_H$\/,  denote by $\zeta=\zeta'\cup\zeta''=\{s=t_0<\cdots<t_n=t\}$ the subdivision obtained by collecting the points in $\zeta'$ and $\zeta''$. If $t_j$ is not contained in $\zeta'$, we consider the family of subdivisions 
 \[
 \zeta_{\mu}(j)=\{t_0<t_{j-1}\leq t_{j-1}+\mu(t_j-t_{j-1})<t_{j+1}<\cdots t_n\},\quad \mu\in[0,1]
 \]
 The same argument as before shows that the minmax relative to $\zeta_0(j)$ and $\zeta_1(j)$ are the same. Continuing this procedure, we get that the minmax relative to $\zeta'$ and $\zeta$ are the same, and the same holds for $\zeta''$ and $\zeta$. Therefore the minmax with respect to $\zeta'$ and $\zeta''$ are the same.
 \end{proof}

 \begin{lem}
 If $v\in C^2\cap C^{\Lip}(\R^d)$, then $R_H^{0,t}v(x)$ verifies the (H-J) equation
 almost everywhere.
 \end{lem}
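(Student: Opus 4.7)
The plan is to combine Rademacher's theorem with the graph-selector behavior built into the generating family. Write $R(t,x) := R_H^{0,t}v(x)$ and let $S : [0,T]\times\R^d\times\R^k \to \R$ be the $C^2$ time-dependent family (\ref{gfqi}), which, by the lemma just above, is a G.F.Q.I.\ for the geometric solution $L_{\mathcal{H},v}$.

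First I would upgrade the Lipschitz regularity. Proposition~\ref{lemun47} gives that $R(t,\cdot)$ is locally Lipschitz, and an identical comparison using Lemma~\ref{coi} together with the continuous dependence of $S(\tau,\cdot,\cdot)$ on $\tau$ (with $C^0$-bounds uniform on compact $(x,t)$-sets) yields local Lipschitz continuity of $R$ jointly in $(t,x)$. By Rademacher's theorem, $R$ is therefore differentiable almost everywhere on $(0,T)\times\R^d$.

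Second, I would apply Proposition~\ref{par} with $(t,x)$ playing the role of the base variable (all the arguments there go through since $S$ is $C^2$ in $(t,x)$). This yields
\[
\p R(t,x) \subset \conv\bigl\{\bigl(\p_t S,\p_x S\bigr)(t,x,\eta) : \eta \in C(t,x)\bigr\},
\]
where $C(t,x)$ is the set of critical points of $S(t,x,\cdot)$ realizing the minmax value. At any such critical $\eta$, the fact that $S$ generates $L_{\mathcal{H},v}$ — which is exactly the content of the preceding lemma, itself built from Lemma~\ref{dergf} — identifies
\[
\bigl(\p_t S(t,x,\eta),\,\p_x S(t,x,\eta)\bigr) = \bigl(-H(t,x,y),\,y\bigr),
\]
where $(x,y) = \vp_H^{0,t}(x_0,dv(x_0))$ is the terminal point of the characteristic encoded by $\eta$.

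Third, I would invoke the remark following Proposition~\ref{par}: because $v$ is $C^2$, for almost every $(t,x)$ the function $S(t,x,\cdot)$ is an excellent Morse function, so $C(t,x)$ reduces to a single point and hence $R$ is a true graph selector there. At such a point where $R$ is additionally differentiable, the convex hull above collapses to a singleton and we obtain $dR(t,x) = (-H(t,x,y),y)$ with $y = \p_x R(t,x)$. This is exactly $\p_t R(t,x) + H(t,x,\p_x R(t,x)) = 0$.

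The main obstacle is the second step: verifying, for the family (\ref{gfqi}) whose subdivision nodes $\tau_j = s + (\tau-s)(t_j-s)/(t-s)$ all scale with $\tau$, that the $(t,x)$-differential of $S$ at a critical $\eta$ is genuinely $(-H,y)$. For a one-step subdivision this is immediate from Lemma~\ref{dergf}, but for general $N$ one must see that the various $\p_\tau\tau_j$-contributions telescope; this is just the invariance of the geometric solution under the choice of subdivision (Lemma~\ref{ides}) combined with the single-step identity of Lemma~\ref{dergf}.
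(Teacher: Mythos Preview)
Your proposal is correct and unpacks precisely what the paper's one-line proof asserts: $S$ is a G.F.Q.I.\ of $L_{\mathcal{H},v}$ (the preceding lemma) and, for $C^2$ initial data, the minmax is a genuine graph selector (remark after Proposition~\ref{par}), so $dR(t,x)\in L_{\mathcal{H},v}$ almost everywhere, which is exactly the equation. The telescoping you flag as the ``main obstacle'' is already absorbed into the statement of the preceding lemma---that $S$ generates $L_{\mathcal{H},v}$ for an arbitrary subdivision---so no additional work is required on that point.
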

 
 \begin{proof}
 This is a direct consequence of the fact that $S$ is a G.F.Q.I. of $L_{\mathcal{H},v}$
 and the minmax is a graph selector in this case.
 \end{proof}
 
 In general, for a Lipschitzian initial function, we do not know whether the minmax verifies
 the equation almost everywhere or not. But in view of the estimation of generalized derivatives in Proposition \ref{par}, and that $(x,R_H^{0,t}v(x))\in \mathcal{F}^t$, we call $u(t,x)=R_H^{0,t}v(x)$ the {\it minmax solution} of the (H-J) Cauchy problem.
 
 \begin{lem}
 \label{cara}
 If $v$ is $C^2$ with bounded second derivative, then there exists an $\ep>0$ such that for $t\in [0,\ep)$, the minmax $R_H^{0,t}v(x)$ is $C^2$.
 \end{lem}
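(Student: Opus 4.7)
The plan is to work with the one-step generating family for $t<\delta_H$, show by a global implicit function argument that $S(t,x,\cdot)$ has a \emph{unique} critical point for all $x$ provided $t$ is small enough, and then conclude since the minmax must coincide with the unique critical value.

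First I would take $\epsilon<\delta_H$ and use the subdivision $\{0<t\}$ in formula \eref{gfqi}, so that
\[
S(t,x,x_0,y_0)=v(x_0)+\phi_H^{0,t}(x,y_0)+(x-x_0)y_0.
\]
The critical point equations $\p_{x_0}S=0$ and $\p_{y_0}S=0$ read $y_0=dv(x_0)$ and $x_0=x+\p_y\phi_H^{0,t}(x,y_0)$. Eliminating $y_0$, critical points correspond bijectively to solutions $x_0$ of
\[
F(t,x,x_0):=x_0-x-\p_y\phi_H^{0,t}\big(x,dv(x_0)\big)=0.
\]

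Next I would verify the existence of a unique global $C^2$ solution $x_0=x_0(t,x)$ for small $t$. Since $\phi_H^{0,0}\equiv0$, differentiating under the integral in \eref{gfd} shows that $|\p_y\phi_H^{0,t}|_{C^2}=O(t)$ uniformly on the relevant compact set (using that $H$ has compact support and $c_H<\infty$). Consequently
\[
\p_{x_0}F(t,x,x_0)=I-\p_{yy}^2\phi_H^{0,t}\big(x,dv(x_0)\big)\cdot D^2v(x_0)=I+O(t),
\]
uniformly in $(x,x_0)$ thanks to the hypothesis $\sup|D^2v|<\infty$. Hence for $\epsilon$ small enough, $\p_{x_0}F$ is invertible with a uniformly bounded inverse, and the map $x_0\mapsto x+\p_y\phi_H^{0,t}(x,dv(x_0))$ is a uniform contraction in $x_0$ (uniformly in $x$), yielding a unique fixed point $x_0(t,x)$ depending $C^2$ on $(t,x)$.

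Finally, the unique critical point is $\eta^\ast(t,x)=\big(x_0(t,x),dv(x_0(t,x))\big)$ and the corresponding critical value $S(t,x,\eta^\ast(t,x))$ is $C^2$ in $(t,x)$. By Proposition~\ref{propun44}, $R_H^{0,t}v(x)$ is a critical value of $S(t,x,\cdot)$; since there is only one such value we must have
\[
R_H^{0,t}v(x)=S\big(t,x,\eta^\ast(t,x)\big),
\]
which is $C^2$ as a composition of $C^2$ maps. The main obstacle is the \emph{global} (rather than merely local) uniqueness of the critical point, for which the uniform bound on $D^2v$ is essential: without it one could at best obtain a locally $C^2$ selection on a neighborhood of each $x$, but not a single-valued solution on all of $\R^d$ with $\epsilon$ independent of $x$.
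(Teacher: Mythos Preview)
Your proof is correct and essentially parallel to the paper's, though phrased on the generating-family side rather than the flow side. The paper argues directly that the characteristic map $f_t:x_0\mapsto X_0^t\big(x_0,dv(x_0)\big)$ is a global diffeomorphism for small $t$ by estimating $\Lip(f_t-\mathrm{Id})\leq\Lip(\alpha_0^t-\mathrm{Id})\big(1+\Lip(dv)\big)<1$, which says exactly that $L=\vp_H^t(dv)$ projects diffeomorphically onto $\R^d$ and is therefore the graph of $dR_H^{0,t}v$. Your critical-point equation $x_0=x+\p_y\phi_H^{0,t}\big(x,dv(x_0)\big)$ is, by the very definition of the generating function $\phi_H^{0,t}$, nothing other than $x=f_t(x_0)$, and your contraction estimate is the same Lipschitz bound read off from the generating function. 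So the two arguments are really two phrasings of one computation; the paper's is shorter because it bypasses the generating family entirely.

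One small regularity caveat: under the paper's standing hypothesis $H\in C^2$, the flow (and hence $\p_y\phi_H^{0,t}$) is a priori only $C^1$, so the implicit function theorem yields $\eta^\ast(t,x)$ of class $C^1$ rather than $C^2$. The conclusion that $R_H^{0,t}v$ is $C^2$ is still correct, but the justification is not ``composition of $C^2$ maps'': one uses that $\p_\eta S$ vanishes at $\eta^\ast$, so $\p_x\big[S(t,x,\eta^\ast(t,x))\big]=\p_xS\big(t,x,\eta^\ast(t,x)\big)$, which is $C^1$ as a composition of $C^1$ maps. This is precisely the argument implicit in the paper's ``$L=\{(x,dR_H^{0,t}v(x))\}$, from which $R_H^{0,t}v$ is $C^2$''.
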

 
 \begin{proof}
 We will show that, there exists an $\ep>0$, such that for $t\in (0,\ep)$,
 the characteristics beginning from the graph $dv$ do not intersect.  More precisely, the map $f_t:x_0\mapsto X_0^t\big(x_0,dv(x_0)\big)$ is a diffeomorphism. Indeed, for $t$ small enough,
 \[
 \Lip(f_t-Id)\leq \Lip(\alpha_0^t-Id)(1+\Lip (dv))<1
 \]
where $\alpha_0^t$ and $c_H$ are defined in Lemma \ref{1.1}. This in turn means that the projection map $L=\vp_H^t(dv)\to \R^d$, $(x,p)\mapsto x$ is a diffeomorphism,
 hence $L=\{x,dR_H^{0,t}v(x)\}$, from which we obtain that $R_H^{0,t}v(x)$ is $C^2$.
 \end{proof}
 
 \subsection{Viscosity solutions}

 \begin{defn}
 A function $u\in C^0\big((0,T)\times \R^d \big)$ is called a \emph{viscosity subsolution} (resp. \emph{supersolution}) of
 \[
 \p_t u + H(t,x, \p_x u)=0
 \]
 when it has the following property: for every $\psi\in C^1\big((0,T)\times \R^d \big)$ and every
 point $(t,x)$ at which $u-\psi$ attains a local maximum (resp. minimum), one has
 \[
 \p_t \psi + H(t,x,\p_x \psi)\leq 0,\quad (\hbox{resp}. \geq 0)\/.
 \]
 The function $u$ is  a \emph{viscosity solution} if it is both a viscosity subsolution and supersolution.
 \end{defn}
 
 \goodbreak

We remark that one can replace $C^1$ test functions $\psi$ by $C^{\infty}$ test functions in the definition. Obviously, a classical $C^1$ solution is a viscosity solution.

 \begin{thm}[\cite{GL2}]
 If $v\in C^{\Lip}(\R^d))$ and $H\in C^2_c([0,T]\times T^*\R^d)$, then there exists a unique
 viscosity solution  of the Cauchy problem of the Hamilton-Jacobi equation. Moreover, this solution
 is globally Lipschitz.
 \end{thm}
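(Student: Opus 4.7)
The plan is to prove existence, uniqueness, and the Lipschitz bound separately, using classical viscosity-solution machinery.

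For \emph{uniqueness}, I would use the doubling of variables method of Crandall–Lions. Given a subsolution $u$ and a supersolution $w$, both continuous and bounded (which we may assume after truncation using the compact support of $H$ and the Lipschitz behaviour of $v$), I would introduce the auxiliary function
\[
\Phi_{\varepsilon,\alpha}(t,x,s,y) = u(t,x) - w(s,y) - \frac{|x-y|^2 + (t-s)^2}{2\varepsilon} - \alpha(|x|^2+|y|^2),
\]
which attains a maximum at some point $(\bar t,\bar x,\bar s,\bar y)$. Standard penalization estimates show $|\bar x-\bar y|^2/\varepsilon$ and $|\bar t-\bar s|^2/\varepsilon$ stay bounded and vanish with $\varepsilon$. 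Plugging the explicit test functions $(t,x)\mapsto w(\bar s,\bar y)+\frac{|x-\bar y|^2+(t-\bar s)^2}{2\varepsilon}+\alpha(|x|^2+|\bar y|^2)$ and its analogue for $w$ into the sub/supersolution inequalities, subtracting, and using the uniform continuity of $H$ on compact sets, one obtains $u(t,x)\le w(t,x)+v(x)$-matching boundary term as $\varepsilon\to 0$ and then $\alpha\to 0$. Applied symmetrically and with the same initial datum, this yields $u\equiv w$.

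For \emph{existence}, I would follow the vanishing viscosity approach, which also motivates the name of the notion. Consider
\[
\partial_t u^\varepsilon + H(t,x,\partial_x u^\varepsilon) = \varepsilon\,\Delta_x u^\varepsilon, \qquad u^\varepsilon(0,\cdot)=v_\varepsilon,
\]
where $v_\varepsilon$ is a smooth Lipschitz regularization of $v$. Since $H\in C^2_c$ and $v_\varepsilon$ is smooth with derivatives uniformly bounded, classical parabolic theory provides a unique smooth bounded solution $u^\varepsilon$. A maximum-principle comparison with translated solutions yields $\Lip(u^\varepsilon(t,\cdot))\le \Lip(v)$ uniformly in $\varepsilon$, and the equation then gives a uniform bound on $\partial_t u^\varepsilon$ via the sup-norm of $H$. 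By Arzelà–Ascoli there exists a subsequence converging locally uniformly to some $u\in C^{\Lip}([0,T]\times\mathbb R^d)$. The standard stability property of viscosity solutions under uniform convergence (which one verifies by passing to the limit in the sub/supersolution test inequalities) shows that $u$ is a viscosity solution with $u(0,\cdot)=v$.

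For the \emph{Lipschitz bound} on the limit, I would combine two observations. First, since $H$ is compactly supported, translations in $x$ are admissible competitors: applying the just-proven uniqueness/comparison to $u$ and $u(\,\cdot\,,\cdot+h)$, whose initial data differ by at most $\Lip(v)|h|$, gives the space-Lipschitz estimate. Second, the time-Lipschitz estimate follows by comparing $u(\,\cdot\,+\tau,\cdot)$ to $u$ on $[0,T-\tau]$ using the $L^\infty$-bound on $H$, or equivalently by integrating the equation against $\partial_t u$ in the viscosity sense. The main technical obstacle in this whole program is the rigorous passage to the limit in the sub/supersolution inequalities and the careful handling of the doubled-variable maximum points; the other steps are comparatively routine once the finite-propagation-speed setting of the paper is exploited to reduce everything to a compact region where $H$ and its derivatives are bounded.
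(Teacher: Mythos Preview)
The paper does not prove this theorem: it is stated with a citation to \cite{GL2} and used as a black box, so there is no ``paper's own proof'' to compare against. Your sketch is essentially the classical Crandall--Lions programme that \cite{GL2} carries out (comparison via doubling of variables, existence via vanishing viscosity or Perron, stability under uniform limits), so in spirit you are reproducing the cited reference rather than diverging from anything in the present paper.

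One point to tighten: your space-Lipschitz argument is not quite right as written. You claim $\Lip(u^\varepsilon(t,\cdot))\le\Lip(v)$ by comparing with translates, and later that ``translations in $x$ are admissible competitors'' because $H$ has compact support. But $H$ depends on $x$, so $u(\cdot,\cdot+h)$ solves the equation with Hamiltonian $H(t,x+h,p)$, not $H(t,x,p)$; compact support does not restore translation invariance. The comparison principle then only yields
\[
|u(t,x)-u(t,x+h)|\le\Lip(v)\,|h|+T\,\|\partial_xH\|_\infty\,|h|,
\]
i.e.\ a Lipschitz constant of order $\Lip(v)+T\|\partial_xH\|$ (which matches the estimates in Proposition~\ref{prof} of the paper), not $\Lip(v)$ alone. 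This is harmless for the theorem as stated---global Lipschitz is all that is claimed---but the constant you wrote is wrong and the justification ``compact support makes translations admissible'' is misleading.
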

 
 A notable feature of the viscosity solution, is the {\it Markov property}: if
$J_s^t:C^{\Lip}(\R^d)\to C^{\Lip}(\R^d)$
 denotes the viscosity solution operator (for a fixed Hamiltonian) which to $v$ associates the time $t$ of the solution equal to $v$ at time $s$, then the ``two-parameter groupoid'' property
 $J_\tau^t=J_s^t\circ J_\tau^s$
 is satisfied. This follows easily from  uniqueness.
 
 \vspace{4pt}
 
 The following Proposition summarizes a well-studied case when the Hamiltonian is convex in $p$, one can refer to \cite{T.J,FG, these}.
 
 \begin{prop} \label{minsemi}Assume  $H\in C^2([0,T]\times T^*\R^d)$  strictly convex in $p$, equal to $|p|^2$ off a compact set, and $v\in C^{\Lip}(\R^d)$. Then the minmax solution is reduced to a min, and it possesses the ``semi-group'' property with respect to time, that is
 \[R_0^tv(x)=R_s^t\circ R_0^sv(x),\quad 0\leq s\leq t\]
 \end{prop}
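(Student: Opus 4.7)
The plan is to identify the minmax $R_H^{s,t}v$ with the classical Lax--Oleinik formula and then deduce the semigroup property from additivity of action. I would fix a subdivision $s=t_0<\dots<t_N=t$ of mesh less than $\delta_H$, so that the generating family $S$ from formula \eref{gfqi} is defined. Because $H$ is strictly convex in $p$ and each step is small, the expansion $\phi_H^{t_i,t_{i+1}}(X,y)\approx -(t_{i+1}-t_i)H(t_i,X,y)+O\big((t_{i+1}-t_i)^2\big)$ makes $y\mapsto \phi_H^{t_i,t_{i+1}}(x_{i+1},y)+(x_{i+1}-x_i)y$ strictly concave and coercive in each $y_i$. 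Its maximum is, by Legendre duality, the minimal action along an extremal of the Lagrangian $L$ (conjugate to $H$ in $p$) joining $x_i$ at time $t_i$ to $x_{i+1}$ at time $t_{i+1}$; call this value $\mathcal{A}_{t_i}^{t_{i+1}}(x_i,x_{i+1})$.

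Next I would argue that this fiberwise strict concavity forces the minmax to reduce to a genuine infimum in the $x_i$'s. The idea is to deform any descending cycle, using the fiberwise gradient flow of the strictly concave parts in the $y_i$'s, onto the graph $y_i=y_i^\ast(x_i,x_{i+1})$ of the unique maximizers; on this graph $S$ takes the discrete action value $v(x_0)+\sum_i\mathcal{A}_{t_i}^{t_{i+1}}(x_i,x_{i+1})$, which is coercive in $(x_0,\dots,x_{N-1})$ thanks to the superlinear growth of $L$ (a consequence of $H$ being $|p|^2$ off a compact set). Consequently
\[
R_H^{s,t}v(x)=\min_{x_0,\dots,x_{N-1}}\Big[v(x_0)+\sum_{i=0}^{N-1}\mathcal{A}_{t_i}^{t_{i+1}}(x_i,x_{i+1})\Big],\qquad x_N:=x.
\]
Refining the subdivision and passing to the limit yields the Lax--Oleinik formula
\[
R_H^{s,t}v(x)=\min_{\gamma(t)=x}\Big[v(\gamma(s))+\int_s^t L(\tau,\gamma(\tau),\dot\gamma(\tau))\,d\tau\Big],
\]
which establishes the first claim of the Proposition (``the minmax solution is reduced to a min'').

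The semigroup property is then immediate from additivity of the action integral: factoring any admissible curve $\gamma:[0,t]\to\R^d$ with $\gamma(t)=x$ through its intermediate value $z=\gamma(s)$ gives
\[
v(\gamma(0))+\int_0^t L\,d\tau=\Big[v(\gamma(0))+\int_0^s L\,d\tau\Big]+\int_s^t L\,d\tau,
\]
and minimizing first for fixed $z$ and then over $z$ identifies $R_H^{0,t}v(x)$ with $R_H^{s,t}\big(R_H^{0,s}v\big)(x)$.

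The main obstacle is the reduction step: proving rigorously that the infimum over all descending cycles equals the minimum on the graph of maximizers. The subtlety is that a descending cycle has dimension equal to the Morse index $k_\infty$ of the quadratic form $Q$ at infinity, which need not match the dimension of the $x$-variables, so one cannot simply reparameterize cycles by $(x_0,\dots,x_{N-1})$. One must either construct an explicit descending cycle achieving the Lax--Oleinik value (exploiting coerciveness of the discrete action to control behaviour at infinity), invoke a Sion-type minimax principle adapted to the saddle structure of $S$, or appeal to the Viterbo--Th\'eret uniqueness of the spectral invariant combined with the verification that the Lax--Oleinik minimum is a genuine critical value of $S$.
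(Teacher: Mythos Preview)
Your route is different from the paper's and leaves precisely the step you flag unresolved. The paper bypasses your ``main obstacle'' entirely by changing generating family. Strict convexity of $H$ in $p$ (together with $H=|p|^2$ at infinity) guarantees that for small time steps the Hamiltonian map $\varphi_s^t$ admits a \emph{classical} generating function $\psi_s^t(X,x)$ depending only on the two positions, via $Y=\partial_X\psi$, $y=-\partial_x\psi$. The corresponding generating family
\[
F_s^t\big(x;(x_i)\big)=v(x_0)+\sum_i\psi_{\tau_i}^{\tau_{i+1}}(x_{i+1},x_i)
\]
has \emph{only} the $x_i$'s as fiber variables and is (up to fiberwise diffeomorphism) quadratic of Morse index~$0$ at infinity. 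Hence the minmax of $F_s^t$ is literally a minimum over $(x_i)$---no saddle analysis, no descending-cycle gymnastics. The semigroup property then follows in one line from the discrete additivity $\Psi_0^t=\Psi_0^s+\Psi_s^t$ of the sum of $\psi$'s, without ever passing to the continuous Lax--Oleinik functional.

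By contrast, the mixed family $S$ from \eref{gfqi} that you work with has quadratic part $Q(\eta)=-x_{N-1}y_{N-1}+\sum(x_{i+1}-x_i)y_i$ of index $Nd$, so descending cycles are genuinely $Nd$-dimensional objects in $(x_i,y_i)$-space; the ``deform onto the graph of maximizers'' step is not a triviality, and your three suggested fixes (explicit cycle, Sion-type principle, Viterbo--Th\'eret uniqueness) are each a real piece of work you have not carried out. The cleanest way to close the gap is in fact the paper's own observation (made in the remark just after the proposition): Viterbo--Th\'eret uniqueness, or a direct $C^0$-approximation argument, identifies the minmax of $S$ with that of $F$, and the latter has index~$0$. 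So your argument can be saved, but only by importing exactly the idea the paper uses directly.
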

 
 \begin{proof} 
 Our hypotheses imply that there exists a constant $\epsilon_H>0$ such that, for $0<t-s<\epsilon_H$, the Hamiltonian diffeomorphism $\vp_s^t$ of $H$ has a ``classical'' generating function $\psi_s^t(X,x)$ in the sense that  $\big((x,y),(X,Y)\big)\in \hbox{Graph}(\vp)$ if and only if
 \[
 \left\{
                     \begin{array}{ll}
                       Y=\p_X \psi(X,x) \\
                       y=-\p_x \psi(X,x)\/.
                     \end{array}
                   \right.
                   \]
Therefore, for any $0\leq s<t\leq T$, the subset $L=\vp_s^t(\p v)$ has the generating family
\begin{equation}
\label{gfmin}
F_s^t(x;(x_i)_{0\leq i\leq j})= v(x_0)+ \Psi_s^t(x,(x_i)):=v(x_0)+ \sum_{0\leq i\leq j} \psi_{\tau_i}^{\tau_{i+1}}(x_{i+1},x_i),
\end{equation}
where $x_{j+1}:=x$ and  $\{s=\tau_0<\tau_1<\dots<\tau_{j+1}=t\}$ is a subdivision of $[s,t]$ such that $|\tau_i-\tau_{i+1}|<\ep_H$, $0\leq i\leq j$. Up to diffeomorphism, $F_s^t$ is quadratic of index $0$ at infinity, since $H$ equals $|p|^2$ off a compact subset. Thus the minmax is reduced to a min:
\[R_s^tv(x)=\min_{(x_i)} F_s^t(x,(x_i)).\] 
Note that $R_s^t$ is independent of the choice of the subdivision (same argument as for Lemma \ref{ides}), hence
 \beaa
 R_s^t\circ R_0^{s}v(x)&=& \min_{(x_i)}
 \left(R_0^s v(x_0)+ \Psi_s^{t} (x,(x_i))\right)\\
 &=& \min_{(x_i)}\Big(\min_{(x_j')}(v(x_0')+
 \Psi_0^s(x_0,(x_j')))+ \Psi_s^{t} (x,(x_i))\Big)\\
 &=&\min_{(x_i),(x_j')} \left( v(x_0')+
 \Psi_0^t(x,(x_i),(x_j'))\right)\\
 &=&R_0^t v(x)\/.
 \eeaa
 \end{proof}
 
 \begin{rem} 
For $S$ and $F$ defined by \eref{gfqi} and \eref{gfmin} respectively, we have 
\[
\inf\max S(x,(x_i,y_i))=\min F(x,(x_i)).
\]
Indeed, when $v$ is $C^2$,  we can conclude by the uniqueness theorem of G.F.Q.I.'s since  $S$ and $F$ generates the same Lagrangian submanifold $L=\vp(dv)$\/; in the general Lipschitz case, we can apply the continuity dependence of the minmax selector on the generating family (ref. Lemma \ref{coi}).
\end{rem}
 
 \begin{rem} 
 The hypothesis that $H=|p|^2$ at infinity can be generalized to the case where the condition of finite propagation speed is satisfied. The min solution operator is a finite dimensional ``discretization'' of the Lax-Oleinik semi-group in weak KAM theory, defined by
 \[
 T_s^tv(x)=\inf_{\gamma(t)=x}\textstyle\big\{v(\gamma(s))+ \int_s^t
 L(t,\gamma(t),\dot{\gamma}(t))dt\big\}
 \]
 where $L$ is the Legendre tranform of $H$ with respect to the $p$ variable, and the $\inf$ is taken over all absolutely continuous paths $\gamma:[s,t]\to \R^d$.
 \end{rem}
 
 \begin{thm}[\cite{T.J}]
 \label{Jouk}
 The min solution $R_0^tv(x)$ is the viscosity solution of the Cauchy problem \emph{(H-J)}.
 \end{thm}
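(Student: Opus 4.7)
The proof will rely on two ingredients already in hand: the semi-group (Markov) property of $R_0^t$ established in Proposition \ref{minsemi}, and the fact that, for $0\le t-s<\ep_H$, one has the explicit formula
\[
R_s^t v(x)=\min_{y\in\R^d}\bigl[v(y)+\psi_s^t(x,y)\bigr],
\]
where $\psi_s^t$ is the classical generating function of $\vp_s^t$. The first step is to reinterpret $\psi_s^t(x,y)$ as the action $\int_s^t L(\tau,\gamma,\dot\gamma)\,d\tau$ along the unique characteristic $\gamma$ joining $(s,y)$ to $(t,x)$, where $L$ is the Legendre transform of $H$ in $p$; this is the content of the integral formula already appearing in Corollary~\ref{comp3}(iii) together with the Legendre duality. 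In particular, since $H$ agrees with $|p|^2$ off a compact set, $L$ is superlinear, so for every $w\in\R^d$ and every small $h>0$ the straight-line-approximated characteristic ending at $(t_0,x_0)$ with velocity $w$ starts at some $y_h=x_0-hw+O(h^2)$ and yields $\psi_{t_0-h}^{t_0}(x_0,y_h)=hL(t_0,x_0,w)+o(h)$ uniformly on compact sets of velocities. Set $u(t,x):=R_0^tv(x)$, which is locally Lipschitz by Proposition~\ref{lemun47}; initial condition and continuity in $t$ are standard and follow from $|\psi_s^t|=O(t-s)$.

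\textbf{Subsolution.} Let $\psi\in C^1$ be such that $u-\psi$ has a local maximum at $(t_0,x_0)$. Fix $w\in\R^d$ and, for $h>0$ small, take $y_h=x_0-hw+O(h^2)$ as above. The semi-group property gives $u(t_0,x_0)\le u(t_0-h,y_h)+\psi_{t_0-h}^{t_0}(x_0,y_h)$, while the local-maximum hypothesis gives $u(t_0,x_0)-u(t_0-h,y_h)\ge\psi(t_0,x_0)-\psi(t_0-h,y_h)$. Combining and Taylor-expanding $\psi$, one obtains
\[
h\,\p_t\psi(t_0,x_0)+h\,w\cdot\p_x\psi(t_0,x_0)+o(h)\le hL(t_0,x_0,w)+o(h).
\]
Dividing by $h$, letting $h\to 0^+$, and then taking the infimum over $w$ converts the right-hand side into $-H(t_0,x_0,\p_x\psi(t_0,x_0))$ by the Fenchel-Legendre identity, yielding $\p_t\psi+H(t_0,x_0,\p_x\psi)\le 0$.

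\textbf{Supersolution.} Suppose now that $u-\psi$ has a local minimum at $(t_0,x_0)$. For $h>0$ small, let $y_h^*$ achieve the minimum in $u(t_0,x_0)=\min_y[u(t_0-h,y)+\psi_{t_0-h}^{t_0}(x_0,y)]$, and set $w_h:=(x_0-y_h^*)/h$. The local-minimum hypothesis and the defining identity together produce
\[
\psi_{t_0-h}^{t_0}(x_0,y_h^*)\le\psi(t_0,x_0)-\psi(t_0-h,y_h^*),
\]
which after Taylor-expanding and using $\psi_{t_0-h}^{t_0}(x_0,y_h^*)=hL(t_0,x_0,w_h)+o(h)$ becomes $L(t_0,x_0,w_h)-w_h\cdot\p_x\psi\le\p_t\psi+o(1)$. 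Since $L(t_0,x_0,v)-v\cdot p\ge -H(t_0,x_0,p)$ for every $v$, one concludes $\p_t\psi+H(t_0,x_0,\p_x\psi)\ge 0$. The main obstacle is to legitimate the linearization $\psi_{t_0-h}^{t_0}(x_0,y_h^*)=hL(t_0,x_0,w_h)+o(h)$ uniformly in $h$: this requires an a priori bound on the minimizing velocities $w_h$, which comes from the superlinearity of $L$ (itself a consequence of $H=|p|^2$ off a compact set) combined with the local Lipschitz bound on $u(t_0-h,\cdot)$ via Proposition~\ref{lemun47}. Once $\{w_h\}$ is bounded, the uniform $C^2$ estimates on $\psi_s^t$ for $t-s<\ep_H$ make the Taylor expansions in both inequalities rigorous, completing the proof.
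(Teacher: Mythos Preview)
Your argument is correct and is the classical Lax--Oleinik proof via the Legendre transform and Fenchel duality; the only delicate point---boundedness of the minimising velocities $w_h$---can indeed be obtained directly from the coercivity bound $\psi_{t_0-h}^{t_0}(x_0,y)\ge c|x_0-y|^2/h-Ch$ (a consequence of $H=|p|^2$ at infinity) together with the Lipschitz bound on $u$, so there is no circularity.

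The paper, however, takes a different and much shorter route: it deduces Theorem~\ref{Jouk} in one line from Proposition~\ref{smgp}, which states that \emph{any} minmax operator satisfying the semi-group property is the viscosity operator, combined with Proposition~\ref{minsemi}, which provides the semi-group property in the convex case. The key lemma there is Lemma~\ref{cara} (for $C^2$ initial data the minmax is a classical solution for short time), and the argument compares with a classical solution launched from the test function $\psi_\tau$ rather than passing through the Lagrangian. Your approach has the advantage of being self-contained and of yielding the explicit Lax--Oleinik representation, but it uses convexity in an essential way (Legendre duality); the paper's approach is more abstract---semi-group plus monotonicity plus ``generator'' property---and is the template later reused (Proposition~\ref{suvis}) to show that the \emph{iterated} minmax converges to the viscosity solution for general non-convex~$H$.
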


 \subsection{Iterated minmax and viscosity solution}
 
 In contrast to the case of convex Hamiltonians, where the minmax is reduced to a min and provides the viscosity solution, for general non-convex Hamiltonians, the minmax and the viscosity solution may differ: see \cite{Vi2,AC,Cardin} for counterexamples, and also \cite{Alain} for a very nice geometric illustration of the fact that the viscosity solution is not necessarily contained in the geometric solution.
 
Particularly, in \cite{Vi2}, the author pointed out without proof that the minmax does not provide a semi-group as a consequence of not being viscosity. We will make  this point clear by showing that the semi-group property is a sufficient condition for the minmax to be viscosity.
 
 \begin{prop}
 \label{smgp} Given $v$, the minmax $R_H^{0,t} v(x)$ is the viscosity solution of the Cauchy problem \emph{(H-J)} if it has the semi-group property with respect to time, that is,
 \[
 R_H^{0,t}v(x)= R_H^{s,t}\circ R_H^{0,s}v(x),\quad 0\leq s<t\leq T\/.
 \]
 \end{prop}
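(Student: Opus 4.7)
I would prove this by the standard viscosity test-function argument, in which the semigroup hypothesis lets us reduce matters to the short-time behavior of the minmax on smooth data, and Lemma~\ref{cara} then identifies that short-time minmax with the \emph{classical} $C^2$ solution of \mbox{(H-J)}. Three ingredients are needed: monotonicity of $R_H^{s,\tau}$, the semigroup property (hypothesis), and short-time classicality.

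\emph{Monotonicity of the minmax.} If $v_1\le v_2$ on $\R^d$, then from the explicit formula~\eref{gfqi} the associated generating families satisfy $S_1\le S_2$ pointwise (the datum enters only through $v(x_0)$), with the same quadratic part $Q$ at infinity, so the descending cycles form the same family. Passing to $\inf\max$ yields $R_H^{s,\tau}v_1\le R_H^{s,\tau}v_2$.

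\emph{Subsolution.} Let $\psi\in C^\infty$ and let $(t_0,x_0)$ with $t_0>0$ be a local maximum of $u-\psi$; after replacing $\psi$ by $\psi+\delta(|t-t_0|^2+|x-x_0|^2)$ I may assume the maximum is strict, with $u(t_0,x_0)=\psi(t_0,x_0)$. Using that $u(s,\cdot)$ is Lipschitz on compact sets (Propositions~\ref{propun44}--\ref{lemun47}) and the finite propagation speed (Proposition~\ref{compact} and Remark~\ref{localv}), I modify $\psi(s,\cdot)$ outside a fixed ball around $x_0$, via a smooth cutoff, so that for some $h_0>0$ and all $s\in[t_0-h_0,t_0]$: (i) $u(s,\cdot)\le\psi(s,\cdot)$ holds \emph{globally} on $\R^d$; (ii) the modified $\psi(s,\cdot)$ is $C^\infty$ with second derivatives bounded uniformly in $s$; (iii) the number $R_H^{s,t_0}\big(\psi(s,\cdot)\big)(x_0)$ is unchanged, since by finite propagation speed only the values of $\psi(s,\cdot)$ in a small neighborhood of $x_0$ matter when $h=t_0-s$ is small.

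Then by the semigroup hypothesis and monotonicity, for $s=t_0-h$ with $h\in(0,h_0]$,
\[
u(t_0,x_0)=R_H^{s,t_0}\big(u(s,\cdot)\big)(x_0)\le R_H^{s,t_0}\big(\psi(s,\cdot)\big)(x_0).
\]
By Lemma~\ref{cara}, applied with initial datum $\psi(s,\cdot)$ whose $C^2$-bound is uniform in $s$, one can choose the admissible time in Lemma~\ref{cara} independently of $s$ close to $t_0$; hence, for $h$ small, $R_H^{s,\cdot}\big(\psi(s,\cdot)\big)$ is the classical $C^2$ solution of \mbox{(H-J)} starting from $\psi(s,\cdot)$ at time $s$, so
\[
R_H^{s,t_0}\big(\psi(s,\cdot)\big)(x_0)=\psi(s,x_0)-h\,H\big(s,x_0,\p_x\psi(s,x_0)\big)+o(h).
\]
A Taylor expansion $\psi(s,x_0)=\psi(t_0,x_0)-h\,\p_t\psi(t_0,x_0)+o(h)$ combined with the previous displays yields
\[
0\le -h\,\big[\p_t\psi(t_0,x_0)+H\big(t_0,x_0,\p_x\psi(t_0,x_0)\big)\big]+o(h).
\]
Dividing by $h>0$ and letting $h\to0^+$ gives the subsolution inequality. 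The supersolution inequality follows symmetrically at a local minimum of $u-\psi$, by modifying $\psi$ from above instead of below.

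\emph{Main obstacle.} The only non-routine step is the surgery on $\psi$ in the sub-/supersolution reduction. One must \emph{simultaneously} produce global domination $u\le\psi$ (or $u\ge\psi$) to invoke monotonicity, preserve uniform $C^2$-bounds so that the admissible time in Lemma~\ref{cara} is independent of $s$, and leave $R_H^{s,t_0}\big(\psi(s,\cdot)\big)(x_0)$ unchanged via finite propagation speed. All three are needed together for the chain \emph{semigroup $+$ monotonicity $+$ short-time classicality} to close; once this is set up, the Taylor-expansion endgame is entirely mechanical.
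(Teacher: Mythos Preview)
Your proof is correct and follows essentially the same route as the paper: semigroup hypothesis plus monotonicity of $R_H^{s,t}$ reduces the viscosity test to comparing with $R_\tau^t\psi_\tau$, and Lemma~\ref{cara} identifies this with the classical $C^2$ solution for short time, after which a first-order expansion gives the desired inequality. The paper is terser about the surgery on $\psi$ (it simply remarks that one may work in a neighbourhood where $\psi$ has bounded second derivative) and writes the classical solution via the integral $\psi_\tau(x)-\int_\tau^t H(s,x,\p_xR_\tau^s\psi_\tau(x))\,ds$ rather than your $o(h)$ Taylor form, but the argument is the same.
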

 
 \begin{proof}
 Suppose $R_0^tv(x):=R_H^{0,t}v(x)$ possesses the semi-group property, we first show that $R_0^t v(x)$ is a viscosity subsolution. For any $(t,x)$, let $\psi$ be a $C^2$ function such that $\psi(s,y)=:\psi_s(y)\geq R_0^sv(y)$, with equality at $(t,x)$. It is enough to consider $\psi$ in a neighborhood of $(t,x)$, where it has bounded second derivative. Then
 \be
 \label{subsol}
 \psi_t(x)=R_{\tau}^t\circ R_0^{\tau}v(x)\leq R_{\tau}^t \psi_{\tau}(x)\/.
 \ee
 By Lemma \ref{cara}, for $t-\tau>0$ small enough,
 the characteristics originating from $d\psi_{\tau}$ do not intersect:
 let $(x_t,y_t)=\vp_{\tau}^t(x_{\tau},\p_x\psi_{\tau}(x_{\tau}))$, where $\vp$ denotes the Hamiltonian
 flow of $H$, then the map
 $p:(x_{\tau},\p_x\psi_{\tau}(x_{\tau}))\mapsto x_t$ is a diffeomorphism.
 Therefore $R_{\tau}^t\psi_{\tau}(x)$ is a classical $C^2$ solution
 of the (H-J) equation.  Hence
 \be \label{hjc} R_{\tau}^t\psi_{\tau}(x)=\psi_{\tau}(x)-\int_{\tau}^t H(s,x,\p_x
 R_{\tau}^s\psi_{\tau}(x))ds\ee Moreover, since $(x,\p_xR_{\tau}^t\psi_{\tau}(x))
 =\vp_{\tau}^t\circ p^{-1}(x)$,
 we get that $\p_xR_{\tau}^t\psi_{\tau}(x)$ is continuous in $\tau$.
 
  Substracting (\ref{hjc}) into (\ref{subsol}), moving
 $\psi_t(x)$ to the right-hand side,  dividing both side by $t-\tau$ and lettting
 $\tau\to t$, we get
 \[0\leq -\p_t\psi_t(x)-H(t,x,\p_x \psi_t(x))\]
 from which we get a subsolution by definition. Similarly, we can
 prove that $R_0^tv(x)$ is a viscosity supersolution.
 \end{proof}
 
 As a direct consequence, we get Theorem \ref{Jouk} since the min solutions form a semi-group (Proposition \ref{minsemi}).
 
 \vspace{6pt}
We remark that Proposition \ref{smgp} does not essentially depend on
the variational formulation of the minmax. Indeed,  an operator is the viscosity solution operator if it verifies the semi-group property, the monotonicity, and it is a generator of the (H-J) equation, that is it generates a regular solution at least within small time when the initial data is regular enough. See for example  \cite{PB} Proposition~20 and \cite{fleming} Theorem 5.1.


%
 To compensate the fact that the minmax is not a semi-group, an idea
 due to M.~Chaperon is to replace the ``minmax''
  by some ``iterated minmax''.
 Roughly speaking, an iterated minmax is obtained by dividing  a given time interval into small pieces
 and taking the minmax step by step. This is a priori a discrete
 semi-group with respect to the points of the subdivision. We are
 going to show that, as the steps of the subdivision go to zero,  the iterated minmax  converges to a genuine semi-group, and therefore to the viscosity
 solution.
 \vspace{6pt}

 In the following, we denote the
 Lipschitz constant of a global Lipschitz function $f$ by $\|\p f\|$ and
 $|\cdot|_K$ denotes the maximum norm on a compact set $K$.
 
 \begin{prop}
 \label{prof}
 Assuming  $H\in C_c^2([0,T]\times T^*\R^d)$ and
 $v\in C^{\Lip}(\R^d)$, we have the
 following estimates:

 $1) $$R_H^{s,t}$ defines an operator from $C^{\Lip}(\R^d)$ to
 $C^{\Lip}(\R^d)$, and
 \[\|\p( R_H^{s,t}v)\|\leq \|\p v\|+ \|\p_x H\|\,|t-s|\]

 $2)$ For any $0\leq s<t_i\leq T$, $i=1,2$,
 \[|R_H^{s,t_1}v(x) -  R_H^{s,t_2}v(x)|\leq
 |t_1-t_2|\max_{t\in [t_1,t_2]} |H(t,x,\cdot)|_Y\] where $Y=\{y:
 |y|\leq \|\p v\|+ \|\p_x
 H\|\max_i |t_i-s|\}$.\\

 $3)$ Let $H^0$ and $H^1$ be two Hamiltonians, then
 \[|R_{H^0}^{s,t}v-R_{H^1}^{s,t}v|_{C^0}\leq |t-s|\max_{\tau
 \in[s,t], y\in Y'}|(H^0-H^1)(\tau,\cdot,y)|_{C^0}\] where
 $Y'=\{y:|y|\leq \|\p v\|+ \max_i\|\p_x H^i\||t-s|\}$.\\

 $4)$ If $v^0, v^1\in C^{\Lip}(\R^d)$ and $K$ is a compact set in
 $\R^d$, then there exists a bounded subset $\tilde{K}\subset \R^d$
 which depends on $K\times[0,T]$ and the constants $\|\p v^i\|$, such
 that \be\label{in}|R_H^{s,t} v^0 -R_H^{s,t} v^1|_{K}\leq
 |v^0-v^1|_{\tilde{K}},\quad 0\leq s<t\leq T.\ee
 \end{prop}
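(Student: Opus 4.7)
The four estimates are proved in a unified way using the explicit generating family $S$ given by (\ref{nnn}) or its time-parameterized version (\ref{gfqi}), combined with three tools: Proposition~\ref{par} (derivatives of the minmax in $x$), Lemma~\ref{coi} ($C^0$ stability of the minmax under changes of the generating family), and Lemma~\ref{dergf} (the time derivatives of $\phi_H^{s,t}$ equal $\pm H$ evaluated along the trajectory). Throughout, fix a subdivision $s=t_0<\dots<t_N=t$ with $|t_{i+1}-t_i|<\delta_H$ so that the relevant $S$'s are well defined.

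Estimate $1)$ follows directly from Proposition~\ref{par}: $\p R_H^{s,t}v(x)\subset\conv\{\p_xS(x,\eta)\mathop{|}\eta\in C(x)\}$. By (\ref{prop134}), each such $\p_xS(x,\eta)$ equals the final momentum $Y_s^t(x_0,y_0)$ of a Hamiltonian trajectory with $y_0\in\p v(x_0)$; integrating $\dot Y=-\p_x H$ along this trajectory yields $|Y_s^t(x_0,y_0)|\leq\|\p v\|+\|\p_xH\|\,|t-s|$, which is the claimed Lipschitz bound on $R_H^{s,t}v$.

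Estimates $3)$ and $4)$ follow from Lemma~\ref{coi} applied on a common parameter space. For $3)$, choose a single subdivision satisfying $|t_{i+1}-t_i|<\min(\delta_{H^0},\delta_{H^1})$ so that $S_{H^0}$ and $S_{H^1}$ share the same quadratic part $Q$; their difference is $\sum_i(\phi_{H^0}^{t_i,t_{i+1}}-\phi_{H^1}^{t_i,t_{i+1}})(x_{i+1},y_i)$, which is bounded uniformly in $\eta$ by differentiation along the homotopy $H^\mu=\mu H^0+(1-\mu)H^1$ via Lemma~\ref{dergf}, producing the estimate $|t-s|\max_{\tau,\,y\in Y'}|H^0-H^1|(\tau,\cdot,y)$. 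Lemma~\ref{coi} then yields $3)$. For $4)$, $S^0-S^1=v^0(x_0)-v^1(x_0)$, and the relevant $x_0$'s at critical points $\eta\in C(x)$ with $x\in K$ lie in $\tilde K:=\pi\big(\vp_H^{t,s}(\pi^{-1}(K))\cap\p v^i\big)$, a bounded set by finite propagation speed (Proposition~\ref{compact} and Remark~\ref{localv}). After truncating $v^0-v^1$ outside $\tilde K$ without affecting the minmax above $K$, Lemma~\ref{coi} gives (\ref{in}).

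The most delicate estimate is $2)$. Fix $x$ and let $c(\tau):=R_H^{s,\tau}v(x)$; continuity of $c$ in $\tau$ is immediate from Lemma~\ref{coi} applied to (\ref{gfqi}), since each $\phi_H^{\tau_i,\tau_{i+1}}$ is $C^0$-bounded. For the sharp Lipschitz constant one uses that $c(\tau)$ is a critical value of $S(\tau,x,\cdot)$ and admits the representation (\ref{equncinq}): $c(\tau)=v(X_\tau^s(z^\star))+\int_s^\tau(Y\dot X-H)\,d\sigma$ along a trajectory through $(x,Y^\star(\tau))\in\vp_H^{s,\tau}(\p v)\cap\pi^{-1}(x)$ with $Y^\star(\tau)\in Y$ by $1)$. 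An envelope computation (holding the endpoint $x$ fixed while varying $\tau$) combined with Lemma~\ref{dergf} gives $c'(\tau)=-H(\tau,x,Y^\star(\tau))$ at points of differentiability, and integration yields the claimed bound. The main obstacle is precisely justifying this envelope identity, since the critical point achieving the minmax may jump with $\tau$; this is handled by a direct comparison of $c(\tau_1)$ and $c(\tau_2)$ through their descending-cycle representations, using that $\p_\tau S$ at any critical point reduces to $-H$ evaluated on the momentum fiber above $x$.
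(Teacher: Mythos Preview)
Your treatment of $1)$ matches the paper. The divergence begins with $2)$--$4)$, where the paper uses a single device you overlook: it applies Proposition~\ref{par} not only in $x$ but with the ``base variable'' taken to be $t$ (for $2)$), and a homotopy parameter $\lambda$ (for $3)$ and $4)$). For $2)$, since $\p_t S^{s,t}(x,x_0,y_0)=-H(t,x,y(t))$ at critical points by Lemma~\ref{dergf}, Proposition~\ref{par} gives $\p_t R_H^{s,t}v(x)\subset\conv\{-H(t,x,y(t))\}$ directly; the ``jumping critical point'' difficulty you describe is exactly what the convex-hull formulation of Proposition~\ref{par} already absorbs, so no envelope argument or descending-cycle comparison is needed. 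For $3)$ and $4)$ the paper sets $H^\lambda=(1-\lambda)H^0+\lambda H^1$ and $v^\lambda=(1-\lambda)v^0+\lambda v^1$ and again applies Proposition~\ref{par} in $\lambda$; because the inclusion involves only $\eta\in C^\lambda(x)$, one automatically gets the restriction $y\in Y'$ in $3)$ and $x_0\in\tilde K$ in $4)$.

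Your route through Lemma~\ref{coi} for $3)$ has a genuine gap. Lemma~\ref{coi} bounds $|R_{S_{H^0}}-R_{S_{H^1}}|$ by $|S_{H^0}-S_{H^1}|_{C^0}$, a supremum over \emph{all} $\eta$, not just critical ones. The derivative $\p_\mu\phi_{H^\mu}^{t_i,t_{i+1}}(x_{i+1},y_i)$ is an integral of $(H^0-H^1)$ along the $H^\mu$-trajectory determined by $(x_{i+1},y_i)$; for arbitrary $(x_{i+1},y_i)$ this momentum is unconstrained, so the best uniform bound is $|t-s|\,|H^0-H^1|_{C^0([0,T]\times T^*\R^d)}$, not the sharper one with $y\in Y'$ that the proposition asserts. (Also, Lemma~\ref{dergf} concerns $\p_s,\p_t$ of $\phi_H^{s,t}$, not $\p_\mu$; the latter is a separate, though analogous, computation.) Your truncation workaround for $4)$ is defensible, but the same issue of principle applies: Lemma~\ref{coi} sees the whole parameter space, so the localisation to $\tilde K$ must be forced by modifying the families, whereas Proposition~\ref{par} in $\lambda$ gives it for free since $C^\lambda(x)$ already lives over $\tilde K$.
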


 \begin{proof}
 The proof is based on Proposition \ref{par} with some variation on
 the original variable $x$, which  can be either $t\in [0,T]$,
 $x\in \R^d$ or some parameter $\lambda$ for the generating family
 constructed as below.
 
 For simplicity, we may first assume that $|t-s|<\delta_H$ so that
 \[S^{s,t}(x,x_0,y_0)= v(\x)+ \phi_H^{s,t}(x,\y)+ x\y-\x\y\]
 Let $(x(\tau),y(\tau))$ denote the Hamiltonian flow, and $C(x)$ be the critical
 set defined in Proposition \ref{par}.
 
 1) For $(x_0,y_0)\in C(x)$, we have \[\p_x S^{s,t}(x,x_0,y_0)= \p_x \phi_H^{s,t}(x,y_0)+ y_0= y(t)\] where
 \[y(t)= y_0 - \int_s^t \p_x H(\tau,x(\tau),y(\tau))d\tau, \quad y_0\in \p v(x_0)\]
 Hence by (\ref{pss}),
 \[\p R_H^{s,t}v(x)\subset \hbox{co}\{y(t),\,y_0\in \p v(x_0)\}\]
 thus
 \[\|\p(R_H^{s,t}v)\|\leq  \|\p v\|+ \|\p_xH\||t-s|.\]
 
 2) For $(x_0,y_0)\in C(x)$, by Lemma \ref{dergf}, we have
 \[\p_t S^{s,t}(x,x_0,\y)= \p_t \phi_H^{s,t}(x,\y)= -H(t,x,y(t)).\]
 Hence
 \[
 \p_t R_H^{s,t}v(x)\subset \hbox{co}\{-H(t,x,y(t)),\,y_0\in \p v(x_0)\}
 \]
 and therefore
 \[|R_H^{s,t_1}v(x) -  R_H^{s,t_2}v(x)|\leq
 |t_1-t_2|\max_{t\in[t_1,t_2], y\in Y}|H(t,x,y)|\]
 where $Y=\{y:\,|y|\leq \|\p v\|+ \|\p_x H\|\max_i|t_i-s|\}$.
 \vspace{6pt}
 
 3) Let $H^{\lambda}= (1-\lambda) H^0 +\lambda H^1$, $\lambda\in [0,1]$, and
 let $S_{\lambda}^{s,t}$ be the corresponding generating families. Fix $\lambda$, for
 $(x_0,y_0)$ in the critical set $C^{\lambda}(x)$ corresponding to $H^{\lambda}$,
 
 \[\p_{\lambda}
 S^{s,t}_{\lambda}(x,\x,\y)= \p_{\lambda}
 \phi_{H^{\lambda}}^{s,t}(x,\y)= \int_s^t
 (H^0-H^1)(\tau,x^{\lambda}(\tau),y^{\lambda}(\tau))d\tau.\]
where the proof of the second equality is similar to that of Lemma \ref{dergf}. Hence,
 \[
 \p_{\lambda} R_{H^{\lambda}}^{s,t}v(x)\subset \hbox{co}\{\int_s^t(H^0-H^1)(\tau,x^{\lambda}(\tau),y^{\lambda}(\tau))d\tau,
 \,y_0\in \p v(x_0)\}
 \]
 and therefore
  \beaa
 |R_{H^0}^{s,t}v(x)-R_{H^1}^{s,t}v(x)|&\leq& \int_0^1 \int_s^t
 |H^0-H^1|(\tau,x^{\lambda}(\tau),y^{\lambda}(\tau))d{\tau}d{\lambda}
 \\&\leq& |t-s|\max_{\tau \in[s,t], y\in
 Y'}|(H^0-H^1)(\tau,\cdot,y)|_{C^0}\eeaa where
 $Y'=\{|y|:\,|y|\leq \|\p v\|+ \max_i \|\p_x H^i\||t-s|\}$.\\

 4) Let $v^{\lambda}=(1-\lambda) v^0+ \lambda v^1$, $\lambda\in [0,1]$ and
 $S_{\lambda}^{s,t}$ denotes the corresponding generating families,
 then $\p_{\lambda} S_{\lambda}^{s,t}(x,x_0,y_0)= v^1(x_0)-v^0(x_0)$,
 \[\p_{\lambda}R_H^{s,t} v^{\lambda}(x)\subset
 \conv\{v^1(\x)-v^0(\x):(\x,\y)\in C^{\lambda}(x)\}\] with  $C^{\lambda}(x)\subset\{(x_0,y_0): |\x|\leq |x|+
 T\|\p_y (H|_{\{y\in Y\}})\|\}$, $Y:=\{y:|y|\leq \|\p v\|+
 T\|\p_x H\|\}$. If we take $\tilde{K}=\{\x:|\x|\leq |x|_K+ T\|\p_y
 (H|_{\{y\in Y\}})\|\}$,  we obtain
 \[|R_H^{s,t}v^0-R_H^{s,t}v^1|_K\leq |v^0-v^1|_{\tilde{K}}\]
 
 In general, the above results follow from the fact that the critical
 set $C(x)$ defines the Hamiltonian flow $(x(\tau),y(\tau))_{s\leq
 \tau\leq t}$ for any $0\leq s<t\leq T$.
 \end{proof}
 
 \begin{rem} 
 The estimates in the proposition, more subtle than needed, precisely reveal
 that finite propagation speed is enough to define the minmax function.
 \end{rem}
 \vspace{6pt}
 
 Now given any compact subset $K\subset \R^d$, we consider $(t,x)\in [0,T]\times K$.
 
 Given a subdivision $\zeta=\{0=t_0<t_1<\dots<t_n=T\}$  of $[0,T]$,
 to each $s\in [0,T]$, we associate a number $[s]_{\zeta}$ as:
 $$
 [s]_{\zeta}:= t_i, \quad \mbox{if}\quad t_i\leq s<t_{i+1}.$$
 \begin{defn} 
 The {\it iterated minmax solution operator} for the (H-J) equation with  respect to a subdivision
 $\zeta $ is defined as follows: for $0\leq s'<s \leq T$,  
 \[
 R_{H,\zeta }^{s',s}:=R_H^{t_j,s}\circ R_H^{t_{j-1},t_j}\circ\dots\circ R_H^{t_{i+1},t_{i+2}}\circ R_H^{s',t_{i+1}},\quad \text{where}\, t_j=[s]_{\zeta},t_i=[s']_{\zeta}, i\leq j.
  \] 
  When the Hamiltonian $H$ is fixed , we may abbreviate our notation
 $R_H^{s,t}$ as $R_s^t$, and the iterated minmax as
 \be
 \label{iterated}
 R_{s',\zeta }^s:= R_{t_j}^s\circ R_{t_{j-1}}^{t_j}\circ\dots\circ R_{t_{i+1}}^{t_{i+2}}\circ R_{s'}^{t_{i+1}},\quad \text{where}\, t_j=[s]_{\zeta},t_i=[s']_{\zeta}, i\leq j.
 \ee 
 indicated.
 \end{defn}
 
 Define the length of $\zeta$ by $|\zeta|:= \max_i|t_i-t_{i+1}|$.
 Suppose that $(\zeta_n)_n$ is a sequence of subdivisions of $[0,T]$ such that
 $|\zeta_n|$ tends to zero as $n$ goes to infinity, and let
 $\big(R_{0,\zeta_n}^sv(x)\big)_n$ be the corresponding sequence of iterated minmax solutions for an
 initial function $v\in C^{\Lip}(\R^d)$.
 
 \begin{lem}\label{246} The sequence of functions $u_n(s,x):=R_{0,\zeta_n}^s v(x)$
 is equi-Lipschitz and uniformly bounded for $(s,x)\in [0,T]\times K$.
 \end{lem}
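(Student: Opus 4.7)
The plan is to use the estimates 1), 2), 4) of Proposition~\ref{prof} applied inductively along the subdivision $\zeta_n$, together with the (elementary) fact that iterated minmax is a discrete semi-group at the points of $\zeta_n$, i.e.\ $u_n(t_j,\cdot)=R_{t_{j-1}}^{t_j}u_n(t_{j-1},\cdot)$ for every $t_j\in\zeta_n$ and $u_n(s,\cdot)=R_{[s]_{\zeta_n}}^s u_n([s]_{\zeta_n},\cdot)$ for arbitrary $s$.

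\textbf{Equi-Lipschitz in $x$.} I would first show, by induction on the subdivision points, that
\[
\|\p u_n(s,\cdot)\|\;\le\;\|\p v\|+\|\p_x H\|\,s\;\le\;\|\p v\|+\|\p_x H\|\,T\,,\qquad s\in[0,T]\,.
\]
Indeed, assuming the bound holds at $s=t_j$, estimate~1) applied to $R_{t_j}^{t}$ with initial datum $u_n(t_j,\cdot)$ yields the bound at any $t\in[t_j,t_{j+1}]$, and in particular at $t_{j+1}$, completing the induction. The key point is that the right-hand side depends only on $T$, $H$ and $v$, not on $\zeta_n$.

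\textbf{Equi-Lipschitz in $t$.} Thanks to the above, the set
$Y=\{y\in\R^d:|y|\le\|\p v\|+\|\p_x H\|T\}$
is a fixed compact set and $M:=\max_{t\in[0,T],\,x\in\R^d,\,y\in Y}|H(t,x,y)|$ is finite (here we use that $H$ has compact support). Given $0\le s<s'\le T$ with $[s]_{\zeta_n}=t_i\le t_j=[s']_{\zeta_n}$, write
\[
|u_n(s,x)-u_n(s',x)|\le |u_n(s,x)-u_n(t_{i+1},x)|+\sum_{k=i+1}^{j-1}|u_n(t_k,x)-u_n(t_{k+1},x)|+|u_n(t_j,x)-u_n(s',x)|.
\]
Each term on the right involves two iterated minmax values whose difference is of the form $|R_{t_k}^{a}w(x)-R_{t_k}^{b}w(x)|$ with $w=u_n(t_k,\cdot)$; estimate~2) bounds it by $|a-b|\,M$ because $\|\p w\|\le\|\p v\|+\|\p_x H\|T$ so $Y$ works uniformly. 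Telescoping, the right-hand side is bounded by $|s-s'|\,M$, uniformly in $n$.

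\textbf{Uniform boundedness.} The equi-Lipschitz property in $x$ bounds the oscillation of $u_n(s,\cdot)$ on $K$, while the equi-Lipschitz property in $t$ bounds $|u_n(s,x)-u_n(0,x)|=|u_n(s,x)-v(x)|$ by $MT$. Hence
\[
|u_n(s,x)|\le |v(x)|+MT\le \max_K|v|+MT\qquad\text{for all }(s,x)\in[0,T]\times K,
\]
uniformly in $n$.

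The only mildly delicate point is making sure the constants $M$ and $Y$ in estimate~2) can be chosen uniformly in $n$ and in the step of the subdivision; this is precisely what the $x$-Lipschitz induction (first step) guarantees, because it bounds $\|\p u_n(t_k,\cdot)\|$ by a quantity depending only on $T$, $H$ and $v$. Once this is in place, the rest is a telescoping argument.
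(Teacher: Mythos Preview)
Your proof is correct and follows essentially the same approach as the paper: the paper simply states the two bounds $\|\p(R_{0,\zeta_n}^s v)\|\leq \|\p v\|+ T\|\p_x H\|$ and $|R_{0,\zeta_n}^s v-R_{0,\zeta_n}^t v|_K \leq |H|_{\mathcal{K}}\,|s-t|$ (with $\mathcal{K}=[0,T]\times K\times\{|y|\le\|\p v\|+T\|\p_x H\|\}$) as direct consequences of Proposition~\ref{prof}, and then takes $t=0$ for the uniform bound. You have spelled out the induction and telescoping that the paper leaves implicit; note incidentally that you do not actually use estimate~4), only 1) and 2).
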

 \begin{proof} By Proposition~\ref{prof}, one can verify that
 \beaa  &&\|\p(R_{0,\zeta_n}^s v)\|\leq \|\p v\|+ T\|\p_x H\|,\\
       &&|R_{0,\zeta_n}^s v-R_{0,\zeta_n}^t v|_K \leq |H|_{\mathcal{K}} |s-t|,\qquad s,t\in [0,T].\eeaa
 where $\mathcal{K}:=\{(t,x,y): t\in [0,T], x\in K, |y|\leq \|\p v\|+
 T\|\p_x H\|\}$.
 
 In particular, taking $t=0$, we  get
 \[|R_{0,\zeta_n}^{s}v|_K\leq |v|_K + T |H|_{\mathcal{K}},\qquad s\in
 [0,T]\] \end{proof}

 \begin{prop} 
 \label{suvis}
 For any sequence $(\zeta_n)_n$ of subdivisions of $[0,T]$ such that $|\zeta_n|\to 0$ as $n\to\infty$, and any compact $K\subset \R^d$, the sequence of iterated minmax $u_n(s,x):=R_{0,\zeta_n}^sv(x)$ has a subsequence converging uniformly on $[0,T]\times K$ to the viscosity solution of the (H-J) problem.
 \end{prop}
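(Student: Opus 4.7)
By Lemma \ref{246}, the family $\{u_n\}$ is equi-Lipschitz and uniformly bounded on $[0,T]\times K$, so Arzel\`a--Ascoli yields a subsequence $u_{n_k}\to u$ uniformly. Since viscosity solutions are unique, it suffices to show that any such limit $u$ is a viscosity solution of (H-J) with $u(0,\cdot)=v$; the initial datum is automatic from $R^0_{0,\zeta_n}v=v$, so I focus on verifying the subsolution and supersolution conditions on $(0,T)\times\mathrm{int}(K)$.

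To verify the subsolution inequality I would adapt the pattern of Proposition \ref{smgp}. Let $\psi\in C^2$ have uniformly bounded $C^2$ norm and suppose $u-\psi$ attains a strict local maximum at $(\bar t,\bar x)$ with $u(\bar t,\bar x)=\psi(\bar t,\bar x)$; a standard localisation (adding a small quadratic outside a neighbourhood) lets me assume $u_n(s,\cdot)\le\psi(s,\cdot)+c_n$ globally for $s$ near $\bar t$, with $c_n\to 0$, and a standard perturbation produces points $(t_n,x_n)\to(\bar t,\bar x)$ at which $u_n-\psi$ is locally maximal. Fix $h>0$ small and put $s_n:=[t_n-h]_{\zeta_n}$, $t'_n:=[t_n]_{\zeta_n}$: these subdivision points approach $\bar t-h$ and $\bar t$, and the iterated minmax satisfies the exact discrete semigroup relation
\[
u_n(t'_n,\cdot)=R_{s_n,\zeta_n}^{t'_n}\bigl(u_n(s_n,\cdot)\bigr).
\]
Combined with monotonicity and the constant-translation property of the one-step minmax (both inherited by the iteration), this gives
\[
u_n(t'_n,x)\le R_{s_n,\zeta_n}^{t'_n}\psi(s_n,\cdot)(x)+c_n.
\]

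The decisive step is to identify the limit as $n\to\infty$ of $R_{s_n,\zeta_n}^{t'_n}\psi(s_n,\cdot)$. Since $\psi$ is uniformly $C^2$ and $h$ is small, Lemma \ref{cara} applies at each individual step of $\zeta_n\cap[s_n,t'_n]$: the one-step minmax on each slab coincides with the classical H-J solution there, the intermediate function stays $C^2$ with uniformly bounded second derivative, and the iteration can therefore be carried through to $t'_n$ for every $n$. The composition is then precisely the classical H-J flow from $s_n$ to $t'_n$ starting at $\psi(s_n,\cdot)$, and by continuous dependence on the initial time and data this converges uniformly to the classical solution $\Phi_h$ of (H-J) emanating from $\psi(\bar t-h,\cdot)$ at time $\bar t-h$. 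Passing to the limit yields $\psi(\bar t,\bar x)\le\Phi_h(\bar t,\bar x)$; since $\Phi_h$ satisfies (H-J) classically, subtracting $\psi(\bar t,\bar x)$, dividing by $h$ and letting $h\to 0$ gives the subsolution inequality $\partial_t\psi(\bar t,\bar x)+H(\bar t,\bar x,\partial_x\psi(\bar t,\bar x))\le 0$. The supersolution inequality is the symmetric argument, and uniqueness of viscosity solutions then upgrades the subsequential convergence to convergence of the whole sequence.

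The main obstacle is precisely this identification of the iterated minmax of a smooth function with a classical H-J flow: it requires propagating a uniform $C^2$ bound along the whole subdivision (so that Lemma \ref{cara} is applicable at every single step) and then passing $n\to\infty$ inside a long composition---a Riemann-sum type convergence for the Hamilton--Jacobi flow that carries the real content of the theorem.
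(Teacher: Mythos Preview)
Your approach is correct and reaches the same conclusion, but it is organized differently from the paper's proof. The paper first isolates an intermediate ``limiting semigroup'' identity
\[
\bar R_0^{s}v(x)=\lim_{k\to\infty}R_{s',\zeta_{n_k}}^{s}\circ\bar R_0^{s'}v(x),
\]
proved by a Cauchy-type argument using the contraction estimate (Proposition~\ref{prof}~(4)); it then applies monotonicity directly to the limit $\bar R$ (at a fixed earlier time $\tau$) exactly as in Proposition~\ref{smgp}. You bypass this identity and instead work at the level of the approximants $u_n$, using the exact discrete semigroup relation on each $\zeta_n$, a perturbed test function with moving touching points $(t_n,x_n)$, and monotonicity plus commutation with constants of the one-step minmax---essentially the Barles--Souganidis scheme that the paper mentions in its acknowledgments. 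Both routes hinge on the same crucial fact, which the paper states in one line and you rightly flag as the heart of the matter: for $C^2$ data with bounded second derivative and a short enough time window, every iterated minmax coincides with the classical $C^2$ solution (so in particular is independent of the subdivision). The paper's route avoids tracking the moving points and the localisation bookkeeping, and yields the semigroup-type identity as a byproduct; your route is more self-contained and closer to the general approximation-scheme philosophy, but you should be careful that your ``global'' comparison $u_n(s,\cdot)\le\psi(s,\cdot)+c_n$ is used at $s=s_n\approx\bar t-h$, so the localisation neighbourhood must be fixed before $h$ is chosen (and the test function must remain globally Lipschitz with bounded second derivative, which is easily arranged via finite propagation speed but deserves a word).
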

 
 \begin{proof} By Lemma \ref{246} and the Arzela-Ascoli theorem, the sequence of functions $u_n$ takes its values in a compact subset of $C^0([0,T]\times K)$, hence $(u_n)_n$ has a convergent subsequence $(u_{n_k})_k$. Denote by $\bar{R}_0^sv(x)$ its limit. We will prove that
  \be\label{semi}\bar{R}_0^{s} v(x)=\lim_{k\to\infty}
  R_{s',\zeta_{n_k}}^s\circ \bar{R}_0^{s'}v(x),\quad \forall\, 0\leq s'<s\leq T.\ee
 Let $\tilde{K}\supset K$ as defined in $4)$ of Proposition~\ref{prof}, applying the Arzela-Ascoli theorem to $\{u_{n_k}\}\subset C^0([0,T]\times \tilde{K})$ and extracting a subsequence of $n_k$ if neccessary, we may suppose that $u_{n_k}$ converges uniformly in $[0,T]\times \tilde{K}$.
 
 For simplicity of notation, we omit the subindex $k$ of $n_k$ in the following.

 Denote $[s]_n:=[s]_{\zeta_n}$. We first remark that
 \be\label{eqa}\bar{R}_{0}^sv(x)=\lim_{n\to\infty}R_{0,\zeta_n}^{[s]_n}v(x),\quad 0\leq s\leq T.\ee
 Indeed, \beaa |R_{0,\zeta_n}^s v(x)-R_{0,\zeta_n}^{[s]_n}v(x)|&=&|R_{[s]_n}^s
 \circ R_{0,\zeta_n}^{[s]_n}v(x)-R_{0,\zeta_n}^{[s]_n}v(x)|\\&\leq&
 |H|_{\mathcal{K}}(s-[s]_n)\leq |H|_{\mathcal{K}}|\zeta_n| \to 0,
 \quad n\to \infty.\eeaa

For any
 $\epsilon>0$, there exists $N$ large enough such that for any
 $i,j>N$,
 \[|R_{0,\zeta_i}^{[s]_i}v-R_{0,\zeta_j}^{[s]_j}v|_{\tilde{K}}< \epsilon,\quad \forall s\in [0,T]\] Hence \beaa 
 |R_{[s']_i,\zeta_i}^{[s]_i}\circ
 R_{0,\zeta_j}^{[s']_j}v-R_{0,\zeta_i}^{[s]_i}v|_K&=&|R_{[s']_i,\zeta_i}^{[s]_i}\circ
 R_{0,\zeta_j}^{[s']_j}v-R_{[s']_i,\zeta_i}^{[s]_i}\circ
 R_{0,\zeta_i}^{[s']_i}v|_K\\&\leq&
 |R_{0,\zeta_j}^{[s']_j}v-R_{0,\zeta_i}^{[s']_i}v|_{\tilde{K}}<\epsilon.\eeaa Let
 $j$ go to infinity, we get
 \[|R_{[s']_i,\zeta_i}^{[s]_i}\circ \bar{R}_0^{s'}v-R_{0,\zeta_i}^{[s]_i}v|_K<\ep,\quad i>N\]
 Thus the limit $\lim_{i\to\infty}R_{[s']_i,\zeta_i}^{[s]_i}\circ \bar{R}_0^{s'}v(x)$ exists and 
 \[\lim_{i\to\infty}R_{[s']_i,\zeta_i}^{[s]_i}\circ \bar{R}_0^{s'}v(x)=\bar{R}_0^sv(x),\quad x\in K.\]
 
  We conclude (\ref{semi}) by verifying the following, which
 is similar to (\ref{eqa}),
 \[\lim_{i\to \infty}R_{s',\zeta_i}^s\circ
 \b_0^{s'}v(x)=\lim_{i\to\infty}R_{[s']_i,\zeta_i}^{[s]_i}\circ
 \bar{R}_0^{s'}v(x).\]
 
 Now we show that the limit function $\bar{R}_0^sv(x)$ is the viscosity solution of the (H-J) problem. We first show that it is a
 viscosity subsolution. For any $(t,x)$, suppose $\psi$ a $C^2$
 function defined in a neighborhood of $(t,x)$, having bounded second derivative
 and such that $\psi(s,y)=:\psi_s(y)\geq
 \bar{R}_0^sv(y)$, with equality at $(t,x)$,
 \be\label{subsol1}\psi_t(x)=\bar{R}_0^tv(x)= \lim_{k\to
 \infty}R_{\tau,\zeta_{n_k}}^t\circ \bar{R}_0^{\tau}v(x)\leq \lim_{k\to
 \infty}R_{\tau,\zeta_{n_k}}^t\psi_{\tau}(x)=R_{\tau}^t\psi_{\tau}(x)\ee the last
 equality holds for
 $t-\tau$ small enough, where the characteristics originating
 from $d\psi_{\tau}$ do not intersect, hence the iterated minmax is
 nothing but the 1-step minmax which is the classical $C^2$
 solution. We conclude by applying the same argument in Proposition
 \ref{smgp}: 
  \be \label{hjc1} R_{\tau}^t\psi_{\tau}(x)=\psi_{\tau}(x)-\int_{\tau}^t H(s,x,\p_x
  R_{\tau}^s\psi_{\tau}(x))ds\ee
  
   Substracting (\ref{hjc1}) into (\ref{subsol1}), moving
  $\psi_t(x)$ to the right-hand side,  dividing both side by $t-\tau$ and lettting
  $\tau\to t$, we get
  \[0\leq -\p_t\psi_t(x)-H(t,x,\p_x \psi_t(x))\]
  from which we get a subsolution by definition. Similarly, we can
  prove that $R_0^tv(x)$ is a viscosity supersolution.
 \end{proof}
 
 For given $H$ and $v$, we say that {\it the limit of iterated minmax solutions exists} in $[s,t]$,
 if for any sequence of subdivision
 $\{\zeta_n\}_{n\in \N}$ of $[s,t]$ such that $|\zeta_n|\to 0$ as
 $n\to\infty$, the related sequence of iterated minmax solutions
 $\{R_{H,\zeta_n}^{s,\tau}v(x)\}_{n\in \N}$, $(\tau,x)\in [s,t]\times \R^d$  converges uniformly
 on compact subsets to a limit
 which is independent of the choice of subdivisions, then, without ambiguity, we denote
this limit also by $\bar{R}_H^{s,\tau}v(x)$. 

 We can now prove our main Theorem
 
 \begin{thm}\label{main} Suppose $H\in C_c^2([0,T]\times T^*\R^d)$ and $v\in C^{\Lip}(\R^d)$,
 then for the Cauchy problem of the Hamilton-Jacobi equation
 \[\left\{
     \begin{array}{ll}
       \p_t u + H(t,x, \p_x u) = 0, \quad t\in (0,T] \\
       u(x,0)= v(x),\quad x\in \R^d.
     \end{array}
   \right.\]
 the limit of iterated minmax solutions
  exists and coincides with the viscosity
 solution.
 \end{thm}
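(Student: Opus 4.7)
The plan is to upgrade the subsequential convergence established in Proposition~\ref{suvis} to convergence of the full sequence of iterated minmax, leveraging the uniqueness of the viscosity solution.

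First, I would fix a compact subset $K\subset\R^d$ and an arbitrary sequence of subdivisions $(\zeta_n)_n$ of $[0,T]$ with $|\zeta_n|\to 0$. By Lemma~\ref{246}, the family $u_n(s,x):=R_{0,\zeta_n}^sv(x)$ is equi-Lipschitz and uniformly bounded on $[0,T]\times K$; hence, by the Arzel\`a-Ascoli theorem, $\{u_n\}$ is relatively compact in $C^0([0,T]\times K)$.

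Next, I would apply Proposition~\ref{suvis} inside a ``subsequence of a subsequence'' argument: any subsequence $(u_{n_j})_j$ is itself a sequence of iterated minmax attached to subdivisions whose mesh tends to $0$, so Proposition~\ref{suvis} produces a further subsequence converging uniformly on $[0,T]\times K$ to the viscosity solution $u$ of (H-J). By the uniqueness theorem for viscosity solutions, this limit $u$ does not depend on the extracted subsequence. The standard metric-space fact---if every subsequence of a sequence admits a further subsequence converging to a common limit $u$, then the whole sequence converges to $u$---then shows that $u_n\to u$ uniformly on $[0,T]\times K$.

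Since $K$ was arbitrary, $R_{0,\zeta_n}^{\tau}v\to u(\tau,\cdot)$ uniformly on compact subsets of $\R^d$, uniformly in $\tau\in[0,T]$; and as the limit is independent of the choice of $(\zeta_n)$, the limit iterated minmax $\bar R_H^{0,\tau}v$ is well-defined and coincides with the viscosity solution. I do not anticipate a serious obstacle here: the substantive work---the equi-Lipschitz and continuity estimates of Proposition~\ref{prof} and Lemma~\ref{246}, and the verification in Proposition~\ref{suvis} that any cluster point is a viscosity sub- and supersolution via smooth test functions and the non-crossing of classical characteristics on short time intervals (Lemma~\ref{cara}, Proposition~\ref{smgp})---has already been done. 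What remains is only the soft ``unique cluster point'' packaging built on the uniqueness of viscosity solutions.
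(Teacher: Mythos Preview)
Your proposal is correct and follows essentially the same approach as the paper: both arguments deduce full convergence from Proposition~\ref{suvis} via the ``unique cluster point'' principle, the paper phrasing it by contradiction (if $u_n\not\to u$ on $[0,T]\times K$, extract a subsequence bounded away from $u$ and contradict Proposition~\ref{suvis}) and you phrasing it directly. The Arzel\`a--Ascoli step you include is harmless but not strictly needed, since Proposition~\ref{suvis} already supplies the convergent further subsequence.
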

 
 \begin{proof} Let $K\subset \R^d$ and $\{\zeta_n\}_n$ is any sequence of subdivisions of $[0,T]$ such that $|\zeta_n|\to 0$ as $n\to \infty$. Denote $u_n(t,x):=R_{0,\zeta_n}^tv(x)$, and $u(t,x)$ the viscosity solution of the (H-J) problem. If $u_n$ does not converge uniformly on $[0,T]\times K$, there exists a $\ep>0$ and a subsequence $n_k$ such that 
$|u_{n_k}-u|_{[0,T]\times K}>\ep$. Note that $\{\zeta_{n_k}\}$ is itself a sequence of subdivisions, this contradicts with  Proposition \ref{suvis}.
 \end{proof}

\appendix
\section{Lipschitz critical point theory} 
\label{appA}
{\renewcommand\thesection{A}
 The notion of Clarke's generalized derivative for Lipschitz functions has been well adapted in the Hamilton-Jacobi theory, such as the works in \cite{FT,Siconofli, PJ,arnaud}. We will give a brief review on Lipschitz critical point theory, extracted from Appendix~A in \cite{these}.  Let us consider a real locally Lipschitz function $f$ on
$X:=\R^k$.

The \emph{Clarke generalized derivative} $\p f(a)$ of $f$ at $a\in X$ is the convex subset $\p f(a)$ of $ X^*=T_x^* X$ defined as follows:  by Rademacher's theorem, the set $\dom(df)$ of differentiability points of $f$ is dense in $ X$\/; if $df:=\big\{\big(x,df(x)\big):x\in\dom(df)\big\}$\/, we let
\[
\p f(a):= \conv\{y\in X^*:(a,y)\in\overline{df}\},
\]
where $\hbox{co}$ stands for the convex hull; in other words, $\p f(a)$ is the convex hull of the set of limits of convergent sequences $df(x_n)$ with $\lim x_n=a$\/. As $|df(x)|$ is bounded by the local Lipschitz constant of $f$ for $x$ close to $a$\/, every sequence $df(x_n)$ with $\lim x_n=a$ is bounded and therefore has a convergent subsequence, implying
\[
\forall a\in X\quad\p f(a)\neq\emptyset\/;
\]
moreover, $\p f(a)$ \emph{is compact}, being the convex hull of a compact subset. The subset
\[
\p f:=\{(x,y)|y\in \p f(x),x\in  X\}
\]
is a generalized version of the {\it enlarged pseudograph} defined for semi-concave
functions in \cite{Ma}, where the  {\it pseudograph} is $df$\/.
In simple one-dimensional cases\/, it is obtained by adding a vertical segment to
$df$ where $f$ is not differentiable:

\begin{rem}
\label{remB2}
The set $\p f(x)$ consists of a single point if and only if $f$ is ``$C^1$ at $x$
with respect to the set where it is differentiable''.
\end{rem}

\begin{prop}
\label{lemB11}
The set-valued function $x\mapsto \p f(x)$ is \emph{upper semi-continuous:} for every convergent sequence $(x_n,y_n)\to (x,y)$ with $y_n\in \p f(x_n)$, one has $y\in \p f(x)$\/. In other words, $\p f$ is closed.
\end{prop}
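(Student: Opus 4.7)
The plan is to exploit directly the definition $\partial f(a)=\conv\{y:(a,y)\in\overline{df}\}$ together with Carath\'eodory's theorem in $\R^k$. Set $E(a):=\{y:(a,y)\in\overline{df}\}$, the set of limits of sequences $df(x_m)$ as $x_m\to a$ through points where $f$ is differentiable; then $\partial f(a)=\conv E(a)$.

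First, I would take a convergent sequence $(x_n,y_n)\to(x,y)$ with $y_n\in\partial f(x_n)$. Since $\partial f(x_n)\subset \R^k$, Carath\'eodory's theorem yields, for each $n$, a representation
\[
y_n=\sum_{i=0}^{k}\lambda_n^i z_n^i,\qquad z_n^i\in E(x_n),\ \lambda_n^i\geq 0,\ \sum_i\lambda_n^i=1.
\]
Because $f$ is locally Lipschitz near $x$, there is a neighbourhood $U$ of $x$ and a constant $L$ such that $|df(w)|\leq L$ at every differentiability point $w\in U$; passing to the tail, all the $z_n^i$ lie in the closed ball of radius $L$. Extract a subsequence (still indexed by $n$) so that $\lambda_n^i\to\lambda^i$ and $z_n^i\to z^i$ for each $i=0,\dots,k$; necessarily $\lambda^i\geq 0$, $\sum_i\lambda^i=1$, and $y=\sum_i\lambda^i z^i$.

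The main (but routine) point is to show $z^i\in E(x)$ for each $i$, so that $y\in\conv E(x)=\partial f(x)$. For each fixed $n,i$, the definition of $E(x_n)$ provides a sequence $w_n^{i,m}\to x_n$ as $m\to\infty$ of differentiability points of $f$ with $df(w_n^{i,m})\to z_n^i$. A standard diagonal extraction picks $m=m(n,i)$ large enough that $w_n^i:=w_n^{i,m(n,i)}$ satisfies $|w_n^i-x_n|<1/n$ and $|df(w_n^i)-z_n^i|<1/n$. Then $w_n^i\to x$, $df(w_n^i)\to z^i$, and each $w_n^i$ is a differentiability point, so $(x,z^i)\in\overline{df}$, i.e.\ $z^i\in E(x)$.

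The only delicate step is the diagonal extraction to produce the $w_n^i$: one must keep track simultaneously of the $k+1$ approximating sequences and of the convergence $\lambda_n^i\to\lambda^i$, $z_n^i\to z^i$. Once this is in hand, convexity of $\partial f(x)$ gives $y=\sum_i\lambda^i z^i\in\partial f(x)$, which is the claim. Restated in terms of the graph, $\partial f$ contains the limit of every convergent sequence of its points, so $\partial f$ is closed.
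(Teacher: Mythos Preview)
The paper does not actually prove this proposition: it appears in the appendix as part of a brief review of Lipschitz critical point theory, stated without proof and attributed to the author's thesis \cite{these}. So there is nothing to compare against on the paper's side.

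Your argument is correct and is the standard one. Carath\'eodory's theorem lets you write each $y_n$ as a convex combination of at most $k+1$ points of $E(x_n)$; the local Lipschitz bound near $x$ forces all these $z_n^i$ to lie in a fixed ball, so compactness gives convergent subsequences of the weights and of the $z_n^i$; and the diagonal choice of differentiability points $w_n^i$ with $|w_n^i-x_n|<1/n$ and $|df(w_n^i)-z_n^i|<1/n$ shows each limit $z^i$ belongs to $E(x)$. One small clarification you could add: when you invoke the bound $|z_n^i|\leq L$, you are using that for $n$ large the approximating differentiability points $w_n^{i,m}$ can be taken inside the neighbourhood $U$ of $x$ where the Lipschitz constant is $L$; this is immediate since $x_n\to x$, but worth saying once.
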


\begin{defn}
\label{crit}
A point $x\in X$ is called a \emph{critical point} of $f$ if $0\in \p f(x)$; the number $f(x)$ is then called a \emph{critical value} of $f$\/. By Proposition~\ref{lemB11}, the \emph{critical set} $\hbox{\rm Crit}(f)$ of $f$\/, consisting of its critical points, is closed in $X$\/.

Setting
\[
\lambda(x):=\min_{w\in\p f(x)}|w|_{X^*}\/,
\]
we say that $f$ satisfies the \emph{Palais-Smale condition} (P.S.) if every sequence $(x_n)$ along
which $f(x_n)$ is bounded, and such that $\lambda(x_n)$ goes to $0$, possesses a convergent subsequence---whose limit is a critical point of $f$ by Proposition~\ref{lemB11}, as there is a sequence $y_n\in\p f(x_n)$ converging to $0$\/.
\end{defn}

\begin{ex}
\label{exB7}
The P.S. condition is satisfied when $\Lip(f-Q)<\infty$ for some \emph{nondegenerate} quadratic form $Q$ on $X$\/; moreover, in that case, $\hbox{\rm Crit}(f)$ is \emph{compact.}
\end{ex}

\begin{proof}
Indeed, if $\psi:=f-Q$\/, each subset $\p f(x)= \p\psi(x)+ d Q(x)$ consists of vectors whose norm is at least $|d Q(x)|-\Lip(\psi)$\/, hence $\lambda(x)\geq|d Q(x)|-\Lip(\psi)$\/, which tends to $+\infty$ when $|x|\to\infty$\/; therefore, there exists $R>0$ such that every sequence $(x_n)$ with $\lim\lambda(x_n)=0$ satisfies $|x_n|\leq R$ for all large enough $n$\/, implying both the P.S. condition and the compactness of $\hbox{\rm Crit}(f)$\/.
\end{proof}

\begin{thm}[Deformation Lemma I]
\label{def1}
Suppose $f$ satisfies the P.S. condition and let $f^c:=\{x|f(x)\leq c\}$ for each $c\in\R$\/. If $c$ is not a critical value of $f$\/, then there exist $\ep>0$ and a bounded smooth vector field $V$ on $X$ equal to $0$ off $f^{c+2\ep}\smallsetminus f^{c-2\ep}$\/, and whose flow $\vp_V^t$ satisfies $\vp_V^1(f^{c+\ep})\subset f^{c-\ep}$\/.
\end{thm}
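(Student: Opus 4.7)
\textbf{Proof plan for Theorem \ref{def1}.} The strategy is the classical one for Palais--Smale theory, adapted to the Lipschitz setting: use the P.S. hypothesis to produce a uniform lower bound for $\lambda$ on a horizontal slab around $c$, construct a smooth pseudo-gradient vector field $W$ on that slab that decreases $f$ uniformly, then cut it off to obtain the required $V$.

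First I would fix $\ep_0>0$ and show that there exists $\delta>0$ with $\lambda(x)\ge \delta$ for all $x\in f^{-1}\big([c-2\ep_0,c+2\ep_0]\big)$. If not, by P.S.\ one would extract a sequence $x_n$ with $f(x_n)\to c$ and $\lambda(x_n)\to 0$ converging to a point $x_*$; upper semi-continuity of $\p f$ (Proposition~\ref{lemB11}) would give $0\in\p f(x_*)$, making $c$ a critical value, contrary to hypothesis. Shrinking $\ep_0$ if necessary, I assume this bound holds.

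Next I construct the pseudo-gradient. Fix $x$ in the slab: $\p f(x)$ is compact, convex and does not contain $0$, so the Hahn--Banach/separating-hyperplane argument provides a unit vector $v_x\in X$ with $\langle v_x,w\rangle\le -\delta/2$ for every $w\in\p f(x)$. By the upper semi-continuity of $\p f$ (the graph $\p f$ is closed and locally bounded), this inequality propagates: there is an open neighbourhood $U_x$ of $x$ on which $\langle v_x,w\rangle\le -\delta/4$ for every $w\in\p f(y)$, $y\in U_x$. Covering the (paracompact) slab by such $U_x$, extracting a locally finite refinement and taking a smooth partition of unity $(\chi_\alpha)$ subordinate to it, I set
\[
W(y):=\sum_\alpha \chi_\alpha(y)\,v_{x_\alpha}\/.
\]
Convexity of $\p f(y)$ then yields $\langle W(y),w\rangle\le -\delta/4$ for all $y$ in the slab and every $w\in\p f(y)$, while $|W|\le 1$.

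Now pick $\ep\in(0,\ep_0)$ with $\ep\cdot(\delta/4)>\ep$ trivially satisfied (any $\ep<\ep_0$ works after normalisation; explicitly rescale $W$ by a constant so that the descent rate equals $4\ep$, or equivalently choose $\ep$ small). Choose a smooth cutoff $\theta:\R\to[0,1]$ equal to $1$ on $[c-\ep,c+\ep]$ and supported in $(c-2\ep,c+2\ep)$, and a smooth function $\rho:X\to[0,1]$ with compact support that equals $1$ on a suitable bounded region containing all relevant orbits (any compact set large enough to trap the orbits starting in $f^{c+\ep}$; compactness of $\mathrm{Crit}(f)$ and P.S.-type estimates on the orbit length limit the escape, or one simply uses $\rho\equiv 1$ if $W$ is already bounded which it is). Define
\[
V(x):= \rho(x)\,\theta(f(x))\,W(x)\/,
\]
which is smooth, bounded, and vanishes off $f^{c+2\ep}\smallsetminus f^{c-2\ep}$. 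For the flow $\vp_V^t$, the function $g(t):=f(\vp_V^t(x))$ is Lipschitz, so differentiable a.e.; at each such $t$, the Lipschitz chain rule gives $g'(t)=\langle w,V(\vp_V^t(x))\rangle$ for some $w\in\p f(\vp_V^t(x))$, whence $g'(t)\le -(\delta/4)\theta(g(t))\rho(\vp_V^t(x))$. Choosing the scaling of $W$ so that the resulting descent rate is $\ge 2\ep$ while $g(t)\in[c-\ep,c+\ep]$ (where $\theta\equiv 1$), integration yields $\vp_V^1(f^{c+\ep})\subset f^{c-\ep}$, as required.

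The main obstacle is the Lipschitz chain rule step: one must know that for a smooth curve $\gamma$ and a Lipschitz $f$, the derivative of $f\circ\gamma$ a.e.\ lies in $\{\langle w,\dot\gamma\rangle:w\in\p f(\gamma(t))\}$. This is standard for Clarke derivatives and is the reason upper semi-continuity of $\p f$ is crucial: it both (i) turns the non-criticality of $c$ into a uniform $\lambda\ge\delta$ on the slab, and (ii) makes the locally chosen descent directions $v_x$ work in an open neighbourhood, so that a smooth $W$ can be assembled by partition of unity.
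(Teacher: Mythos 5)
The paper itself does not prove Theorem~\ref{def1}: the appendix only states it, referring to Appendix~A of \cite{these} (the argument goes back to Chang's Lipschitz critical point theory). So your proposal can only be judged on its own terms, and on those terms it is the standard pseudo-gradient construction and is essentially sound: P.S.\ plus upper semi-continuity of $\p f$ give a uniform bound $\lambda\geq\delta$ on a slab; the minimal-norm element (or a separating hyperplane) of the compact convex set $\p f(x)$ gives a local descent direction; upper semi-continuity propagates it to a neighbourhood; a partition of unity assembles a bounded smooth $W$ with $\langle W(y),w\rangle\leq-\delta/4$ for all $w\in\p f(y)$; and the a.e.\ chain rule along the (absolutely continuous) function $t\mapsto f(\vp_V^t(x))$ yields the descent estimate. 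Two points need repair. First, your cutoff $\theta(f(x))$ is only \emph{Lipschitz}, since $f$ is, whereas the theorem explicitly asks for a \emph{smooth} $V$; replace $\theta\circ f$ by a smooth Urysohn function equal to $1$ on the closed set $f^{-1}([c-\tfrac{3}{2}\ep,c+\tfrac{3}{2}\ep])$ and supported in the open set $f^{-1}\big((c-2\ep,c+2\ep)\big)$ --- the descent estimate is unaffected because the cutoff still equals $1$ wherever $f\in[c-\ep,c+\ep]$. Second, the sentence ``pick $\ep$ with $\ep\cdot(\delta/4)>\ep$'' is not what you mean: the correct requirement is $\delta/4\geq 2\ep$ (equivalently, rescale $W$ by $8\ep/\delta$), so that a descent rate of at least $2\ep$ over unit time carries $f^{c+\ep}$ into $f^{c-\ep}$. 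Also drop the compactly supported spatial cutoff $\rho$: since $f^{c+\ep}$ need not be bounded, truncating in space would leave points of $f^{-1}([c-\ep,c+\ep])$ fixed and destroy the conclusion; as you note, $\rho\equiv1$ is fine because $W$ is already bounded and the flow is complete.
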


\begin{thm}[Deformation Lemma II]
\label{def2}
Suppose  $f$ satisfies the P.S. condition. If $c\in \R$ is a critical value of $f$ and $N$ \emph{any} neighbourhood of $K_c:=\hbox{\rm Crit}(f)\cap f^{-1}(c)$, then there exist $\ep>0$ and a bounded smooth vector field $V$ on $X$ equal to $0$ off $f^{c+2\ep}\smallsetminus f^{c-2\ep}$\/, whose flow $\vp_V^t$ satisfies $\vp_V^1(f^{c+\ep}\smallsetminus N)\subset f^{c-\ep}$\/.
\end{thm}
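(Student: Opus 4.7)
The plan is to construct a smooth pseudo-gradient vector field $V$ that descends $f$ at a uniform rate outside a small neighborhood of $K_c$, and to rescale it so that its time-$1$ flow pushes $f^{c+\ep}\setminus N$ down by more than $2\ep$. The first step is to extract a quantitative subdifferential gap from the P.S. condition. Pick nested open sets $N''\subset N'\subset N$ with $\overline{N''}\subset N'$, $\overline{N'}\subset N$, and $K_c\subset N''$; such $N',N''$ exist because $K_c$ is compact (any P.S.-sequence in $K_c$ admits a convergent subsequence by definition, and by upper semicontinuity of $\p f$ the limit is again in $K_c$). I claim there exist $\ep_0>0$ and $\delta>0$ such that $\lambda(x)\geq\delta$ whenever $|f(x)-c|\leq 2\ep_0$ and $x\notin N''$. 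Otherwise a sequence $(x_n)$ with $f(x_n)\to c$, $\lambda(x_n)\to 0$ and $x_n\notin N''$ would, by the P.S. condition, converge along a subsequence to a point of $K_c\subset N''$; the openness of $N''$ would contradict $x_n\notin N''$.

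The second step is to build the pseudo-gradient on the open set $A:=\{|f-c|<2\ep_0\}\setminus\overline{N''}$. At each $x\in A$, $\p f(x)$ is a compact convex subset of $X^*$ whose distance to $0$ is at least $\delta$, so Hahn--Banach supplies a unit vector $e_x\in X$ with $\langle w,e_x\rangle\geq\delta/2$ for every $w\in\p f(x)$; upper semicontinuity of $\p f$ (Proposition~\ref{lemB11}) makes the weakened inequality $\langle w,e_x\rangle\geq\delta/3$ persist on some Euclidean ball $U_x\subset A$ around $x$ for every $y\in U_x$ and $w\in\p f(y)$. A smooth partition of unity $\{\rho_j\}$ subordinate to a locally finite subcover $\{U_{x_j}\}$ of $A$ gives
\[
V_0(y):=-\sum_j\rho_j(y)\,e_{x_j},
\]
a smooth bounded field on $A$, and convexity in $e$ of the inequality above yields $\langle w,V_0(y)\rangle\leq-\delta/3$ for all $y\in A$ and $w\in\p f(y)$. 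Fix $\ep\in(0,\ep_0)$ and pick, by smooth Urysohn in $\R^k$, a function $\psi\in C^\infty(X,[0,1])$ with $\psi\equiv 1$ on the closed set $\{|f-c|\leq\ep\}\setminus N'$ and $\hbox{supp}\,\psi\subset\{|f-c|<2\ep\}\setminus\overline{N''}$. Define $V:=(6\ep/\delta)\,\psi V_0$, extended by $0$ off $A$; then $V$ is smooth and bounded, supported in $f^{c+2\ep}\setminus f^{c-2\ep}$, and $\langle w,V(y)\rangle\leq-2\ep$ for every $y\in\{|f-c|\leq\ep\}\setminus N'$ and $w\in\p f(y)$.

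For any $x\in f^{c+\ep}\setminus N\subset\{|f-c|\leq\ep\}\setminus N'$, Clarke's chain rule applied to the locally Lipschitz composition $t\mapsto f(\vp_V^t(x))$ yields $\frac{d}{dt}f(\vp_V^t(x))\leq-2\ep$ as long as the orbit remains in $\{|f-c|\leq\ep\}\setminus N'$; since $f$ is nonincreasing along such an orbit, the bound $f\leq c+\ep$ persists, and integrating over $[0,1]$ gives $f(\vp_V^1(x))\leq(c+\ep)-2\ep=c-\ep$. The principal obstacle is the ``no-escape'' verification: ensuring that the orbit does not leave $\{|f-c|\leq\ep\}\setminus N'$ through the boundary $\p N'$ before $f$ has dropped to $c-\ep$. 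This is handled by a further shrinking of $\ep$, using the uniform bound $|V|\leq 6\ep/\delta$ to control the horizontal displacement $\int_0^1|V|\,dt$ by the distance gap between $\p N'$ and $\p N$; this nested-neighborhood bookkeeping is the standard delicate part of the Lipschitz deformation lemma and parallels the smooth case of \cite{FHb}.
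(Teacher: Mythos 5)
The paper does not actually prove Theorem~\ref{def2}: Appendix~A is an overview ``extracted from Appendix~A in \cite{these}'', and both deformation lemmas are stated there without proof. So there is nothing internal to compare against, and your argument has to stand on its own. It does, essentially: what you write is the standard Chang-style pseudo-gradient construction for locally Lipschitz functionals, and every step is sound --- the compactness of $K_c$ and the uniform gap $\lambda\geq\delta$ on $\{|f-c|\leq2\ep_0\}\smallsetminus N''$ from the P.S.\ condition, the local separating vectors $e_x$ made uniform on balls by upper semicontinuity of $\p f$, the partition-of-unity field $V_0$ with $\langle w,V_0(y)\rangle\leq-\delta/3$, the cutoff and rescaling, and the a.e.\ differentiation of the Lipschitz function $t\mapsto f(\vp_V^t(x))$ followed by integration. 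Two small remarks. First, you can skip Hahn--Banach: taking $e_x$ to be the normalized minimal-norm element $w_0/|w_0|$ of the compact convex set $\p f(x)$ gives $\langle w,e_x\rangle\geq|w_0|\geq\delta$ for all $w\in\p f(x)$ directly, by the variational characterization of the nearest point of a convex set. Second, the one step you leave as a sketch --- the ``no-escape'' verification --- is genuinely the only place where care is needed, but the mechanism you name does close it and deserves one more line: since $K_c$ is compact you may choose $N'$ with $\overline{N'}$ compact and contained in $N$, so that $d:=\mathrm{dist}(\overline{N'},X\smallsetminus N)>0$; shrinking $\ep$ so that $6\ep/\delta<d$, an orbit starting in $f^{c+\ep}\smallsetminus N$ travels a distance at most $6\ep/\delta<d$ in time $1$ and therefore never meets $N'$, while $f$ is nonincreasing everywhere along the flow, so the orbit stays in $\{|f-c|\leq\ep\}\smallsetminus N'$ until $f$ drops below $c-\ep$, after which it stays there. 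With that sentence added, the proof is complete.
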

 
\begin{lem}[Chain rule] \label{B12}If $f:X\to \R$ is a Lipschitz function,
$F:X\to X$ a $C^1$ diffeomorphism, then
\[\p (f\circ F)(x)=\p f(F(x))\circ d F(x):=\{ dF(x)(\xi),\,\xi\in
\p f(F(x))\}.\]
\end{lem}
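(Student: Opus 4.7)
The plan is to argue directly from the definition of Clarke's generalized derivative. Recall that $\p g(x)$ is the convex hull of the set of limits $\lim dg(x_n)$ over sequences $x_n \to x$ at which $g$ is differentiable. So my strategy is to relate the differentiability set of $f \circ F$ to that of $f$ via the diffeomorphism $F$, apply the classical chain rule on that dense set, and then pass to limits and convex hulls while exploiting the fact that $dF(x)$ is a \emph{fixed} linear map (depending only on the base point $x$, not on the sequence used to compute the subdifferential).

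First, I would observe that since $F$ is a $C^1$ diffeomorphism and in particular bi-Lipschitz locally, the set $\dom(df) \subset X$ of differentiability points of $f$ has full Lebesgue measure (by Rademacher), and its preimage $F^{-1}(\dom(df))$ also has full measure. At any $z \in F^{-1}(\dom(df))$, the function $f\circ F$ is differentiable and the classical chain rule gives $d(f\circ F)(z) = df(F(z)) \circ dF(z)$. Now take any sequence $x_n \to x$ with $x_n \in \dom(d(f\circ F))$; by choosing a slightly perturbed sequence in $F^{-1}(\dom(df))$ (which is dense) we may assume $F(x_n) \in \dom(df)$. Set $\xi_n := df(F(x_n))$; this sequence is bounded by the local Lipschitz constant of $f$, hence has a convergent subsequence $\xi_n \to \xi$. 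Since $F$ is $C^1$, $dF(x_n) \to dF(x)$, so
\[
d(f\circ F)(x_n) = \xi_n \circ dF(x_n) \longrightarrow \xi \circ dF(x),
\]
with $\xi \in \p f(F(x))$ by definition. Conversely, every $\xi \in \p f(F(x))$ is a convex combination of such limits, and composing with the fixed linear map $dF(x)$ on the right produces a corresponding element of the right-hand side.

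To finish, I would take convex hulls. Because the map $L \mapsto L \circ dF(x)$ is a linear (in fact invertible, since $dF(x)$ is) map from $X^*$ to $X^*$, it commutes with the convex hull operation: $\conv\{\xi \circ dF(x) : \xi \in A\} = (\conv A) \circ dF(x)$ for any bounded set $A \subset X^*$. Applying this to $A$ equal to the set of subsequential limits of $df(F(x_n))$ with $x_n \to x$ gives the desired equality
\[
\p(f\circ F)(x) = \p f(F(x)) \circ dF(x).
\]

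The only mildly delicate point is the density argument used to replace a generic approximating sequence $x_n$ (in $\dom(d(f\circ F))$) by one whose $F$-image lies in $\dom(df)$; this is handled because $F$ is a diffeomorphism, so $\dom(d(f\circ F))$ and $F^{-1}(\dom(df))$ differ by a null set and both are dense near $x$. Everything else is a matter of combining the classical chain rule on a full-measure set with the continuity of $dF$ and the stability of convex hulls under precomposition with a linear isomorphism.
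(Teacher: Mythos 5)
The paper states Lemma~\ref{B12} without proof (it is quoted from Appendix~A of \cite{these}), so there is no in-paper argument to compare against; your proposal follows the standard route and its overall structure --- compute $d(f\circ F)$ on the differentiability set via the classical chain rule, pass to subsequential limits using the boundedness of $df$ near $F(x)$ and the continuity of $dF$, then use that precomposition with the fixed linear isomorphism $dF(x)$ commutes with convex hulls --- is correct and does establish the lemma.

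The one step you should repair is the ``slightly perturbed sequence'' device. As written it does not work: if $x_n\in\dom\bigl(d(f\circ F)\bigr)$ and you replace $x_n$ by a nearby point $x_n'$ with $F(x_n')\in\dom(df)$, there is no reason why $d(f\circ F)(x_n')$ should be close to $d(f\circ F)(x_n)$ --- the derivative of a Lipschitz function is in general badly discontinuous on its domain of definition, so a density argument alone cannot transfer the limit from one sequence to the other. (The standard way to make such an argument rigorous is Clarke's theorem that $\p g(x)$ is unchanged if one further excludes an arbitrary Lebesgue-null set from the approximating sequences; you would then note that $F^{-1}\bigl(X\setminus\dom(df)\bigr)$ is null because $F^{-1}$ is locally Lipschitz.) Fortunately, in your situation no perturbation and no null-set lemma are needed at all: since $F$ is a $C^1$ diffeomorphism, $f=(f\circ F)\circ F^{-1}$, and applying the classical chain rule in both directions shows that $\dom\bigl(d(f\circ F)\bigr)=F^{-1}\bigl(\dom(df)\bigr)$ \emph{exactly}. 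Hence every sequence $x_n\to x$ at which $f\circ F$ is differentiable automatically satisfies $F(x_n)\in\dom(df)$, the identity $d(f\circ F)(x_n)=df(F(x_n))\circ dF(x_n)$ holds along the whole sequence, and the rest of your argument (bounded subsequence of $df(F(x_n))$, $dF(x_n)\to dF(x)$, equality of the two limit sets, and the convex-hull commutation) goes through verbatim. With that substitution the proof is complete.
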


\begin{lem} \label{B16}If $f,g:X\to \R$ are Lipschitz functions, then
\[\p(fg)(x)\subset f(x)\p g(x)+g(x)\p f(x).\]
\end{lem}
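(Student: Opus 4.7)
The plan is to derive this subdifferential product rule by reducing it to the classical calculus product rule, which holds almost everywhere by Rademacher's theorem. First I would observe that $h := fg$ is locally Lipschitz (as a product of locally Lipschitz functions), so $\p h(x)$ is well-defined. Let $D := \dom(df) \cap \dom(dg)$: this set has full Lebesgue measure in $X$, and at every point $y \in D$ the function $h$ is classically differentiable with
\[
dh(y) = f(y)\, dg(y) + g(y)\, df(y).
\]

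The key step will be to check that every element of $\p h(x)$ arises as a convex combination of limits $\lim_n dh(x_n)$ along sequences $x_n \to x$ entirely contained in $D$ (not merely in the potentially larger set $\dom(dh)$). This is a classical reduction principle: the Clarke gradient is insensitive to removing a set of Lebesgue measure zero from the differentiability locus. Granting this, I would take any such sequence $x_n \to x$ in $D$ along which $dh(x_n)$ converges, and note that $df(x_n)$ and $dg(x_n)$ are bounded by the Lipschitz constants of $f$ and $g$. Extracting a subsequence, $df(x_n) \to F$ and $dg(x_n) \to G$, with $F \in \p f(x)$ and $G \in \p g(x)$ by the very definition of Clarke's generalized derivative. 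Continuity of $f$ and $g$ gives $f(x_n) \to f(x)$ and $g(x_n) \to g(x)$, whence
\[
\lim_n dh(x_n) = f(x)\, G + g(x)\, F \in f(x)\, \p g(x) + g(x)\, \p f(x).
\]
The right-hand side is convex, being a sum of two convex sets, so taking convex hulls preserves the inclusion and yields the lemma.

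The principal obstacle is justifying the reduction to sequences in the full-measure subset $D$, as this fact is not among the statements gathered in Appendix A. I would supply it either by citation to Clarke's monograph \emph{Optimization and Nonsmooth Analysis}, or, if a self-contained argument is preferred, by passing through the equivalent characterization $\p h(x) = \{\xi : \langle \xi, v \rangle \leq h^{\circ}(x;v) \text{ for all } v\}$ via the generalized directional derivative and showing that $h^\circ(x;v)$ is unchanged when the limsup in its definition is restricted to points of $D$ (using Fubini on directional difference quotients). A fully elementary alternative is to bound $h^\circ(x;v)$ directly from the algebraic identity $h(y+tv) - h(y) = f(y+tv)[g(y+tv)-g(y)] + g(y)[f(y+tv)-f(y)]$: the continuity of $f, g$ lets one pull the factors $f(y+tv) \to f(x)$ and $g(y) \to g(x)$ out of the limsup, producing an inequality whose support-function dual is exactly the claimed inclusion, with the signs of $f(x)$ and $g(x)$ handled by $f^\circ(x;-v) = (-f)^\circ(x;v)$.
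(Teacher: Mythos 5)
The paper does not actually prove Lemma~\ref{B16}: it is stated without proof in Appendix~A, which is presented as a review of Clarke's calculus extracted from \cite{these}, so there is no in-paper argument to compare yours against. On its own terms your proof is correct and is the standard one (essentially Clarke's proof of the product rule). You also correctly isolate the one point that genuinely needs justification: the paper's definition takes convex hulls of limits $\lim df(x_n)$ over \emph{all} differentiability points, whereas your argument needs sequences confined to the full-measure set $D=\dom(df)\cap\dom(dg)$, and this requires the fact that the Clarke gradient is unchanged when a Lebesgue-null set is deleted from the differentiability locus. That fact is not among the statements gathered in the appendix, so citing Clarke (Theorem~2.5.1 of \emph{Optimization and Nonsmooth Analysis}) or running your ``elementary alternative'' through the generalized directional derivative $h^{\circ}(x;v)$ is indeed necessary to make the argument self-contained; the sign bookkeeping you sketch for $f(x)<0$ via $(-f)^{\circ}(x;v)=f^{\circ}(x;-v)$ is the right way to match the support function of $f(x)\,\p g(x)+g(x)\,\p f(x)$. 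The only cosmetic remark is that once you know each limit point lies in $f(x)\,\p g(x)+g(x)\,\p f(x)$, you should note this set is convex \emph{and closed} (it is a sum of two compact convex sets), so it absorbs the convex hull of the limit set; you say ``convex'' but closedness is what lets you dispense with closures entirely.
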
 
 }
 
\section{An example} \label{rarefaction}

Let us look at a simple example where the minmax and the viscosity solution differ to see how the iterated minmax converges to the latter. 
\label{rare} Consider the (H-J) problem  for a conservation law in one space dimension, i.e. $H$ depends only on $p$. In this case, the minmax\footnote{By the {\it finite propagation speed} property, the minmax exists when $v$ is globally Lispchitz and $H(p)$ is $C^1$.} is given by
\[R_0^tv(x)=\inf\max S_t(x;x_0,y_0):=\inf\max (v(x_0)+xy_0-tH(y_0)-x_0y_0)\]
and the geometric solution and wave front are given by
\beaa L_t&=&\vp^t(\p v)=\{(x,y_0)|x=x_0+tH'(y_0), y_0\in \p v(x_0)\},\\ \mathcal{F}^t&=&\mathcal{F}^t(v)=\{(x,S_t(x;x_0,y_0))|x=x_0+tH'(y_0), y_0\in \p v(x_0)\}.\eeaa
For an example, we take
\[H(p)=-p^3+p^2+p\]
and the initial function $v(x)$ globally Lipschitz and in a neighbourhood of $x=0$, \[v(x)=\left\{
     \begin{array}{ll} -x(x-1),\quad x\leq 0\\x(x-1),\quad\quad x\geq 0\end{array}
        \right.\]
See Figure \ref{case11}, where $p_0^{\pm}=v'(0\pm)$.
\input{case11.TpX}

For $t>s>0$ small, the geometric solution and
wave fronts are depicted as in Figure \ref{m1},\ref{L1},\ref{m2},\ref{L2} below. If $A$ is a subset of $\R$, we denote
\[\mathcal{F}^t_A:=\mathcal{F}^t|_{\{x_0\in A\}}=\{(x,S_t(x;x_0,y_0))\in \mathcal{F}^t, x_0\in A\}.\]

\begin{figure}
\begin{minipage}[t]{0.5\linewidth}
\begin{flushleft} \large
\includegraphics[width=2.2in]{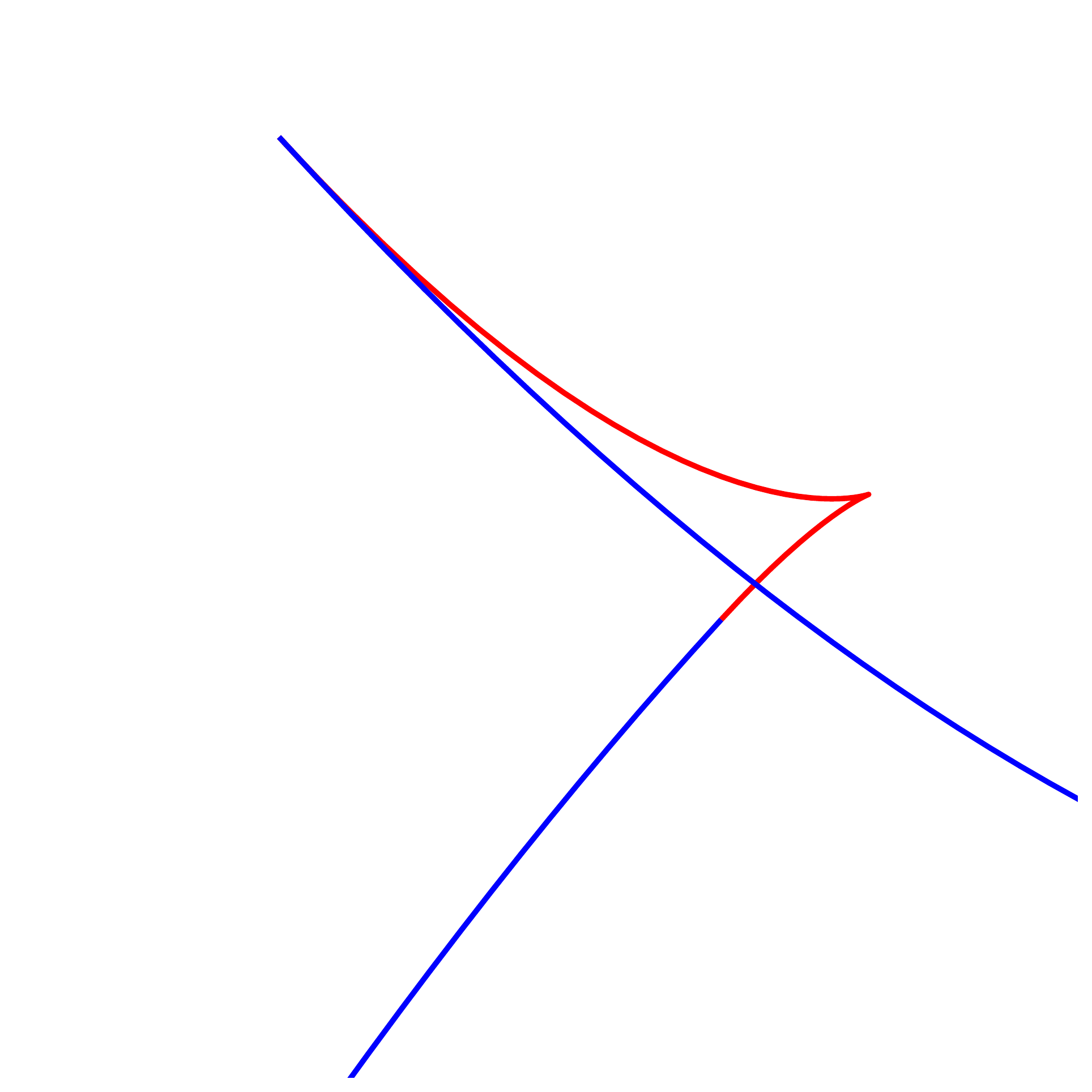}
\caption{$\mathcal{F}^t(v)$ }\label{m1}
\end{flushleft}
\end{minipage}%
\begin{minipage}[t]{0.5\linewidth}
\begin{flushright} \large
\includegraphics[width=2.0in]{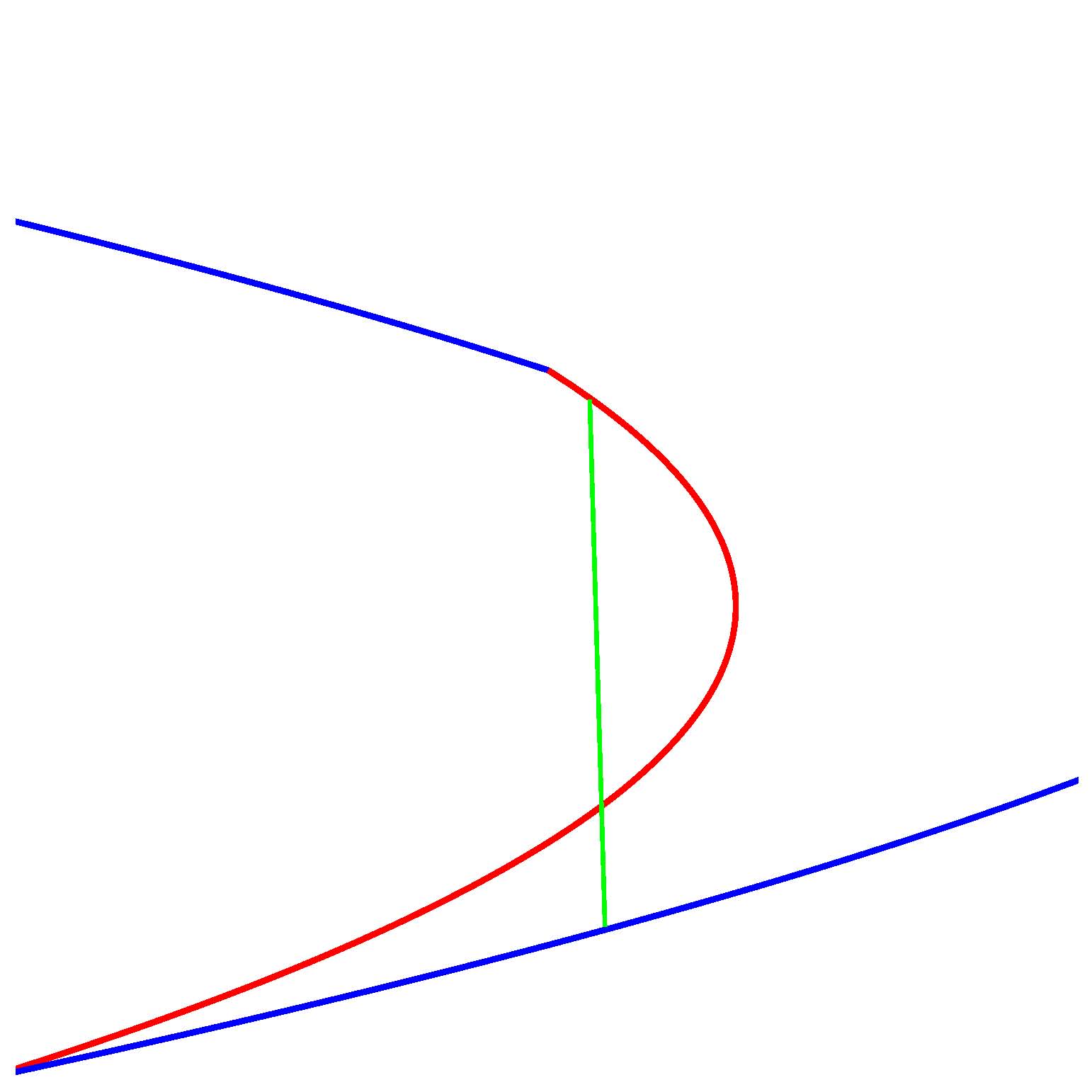}
\caption{$\vp^t(\p v)$}\label{L1}
\end{flushright}

\end{minipage}
\end{figure}

\begin{figure}
\begin{minipage}[t]{0.5\linewidth}
\begin{flushleft} \large
\includegraphics[width=2.2in]{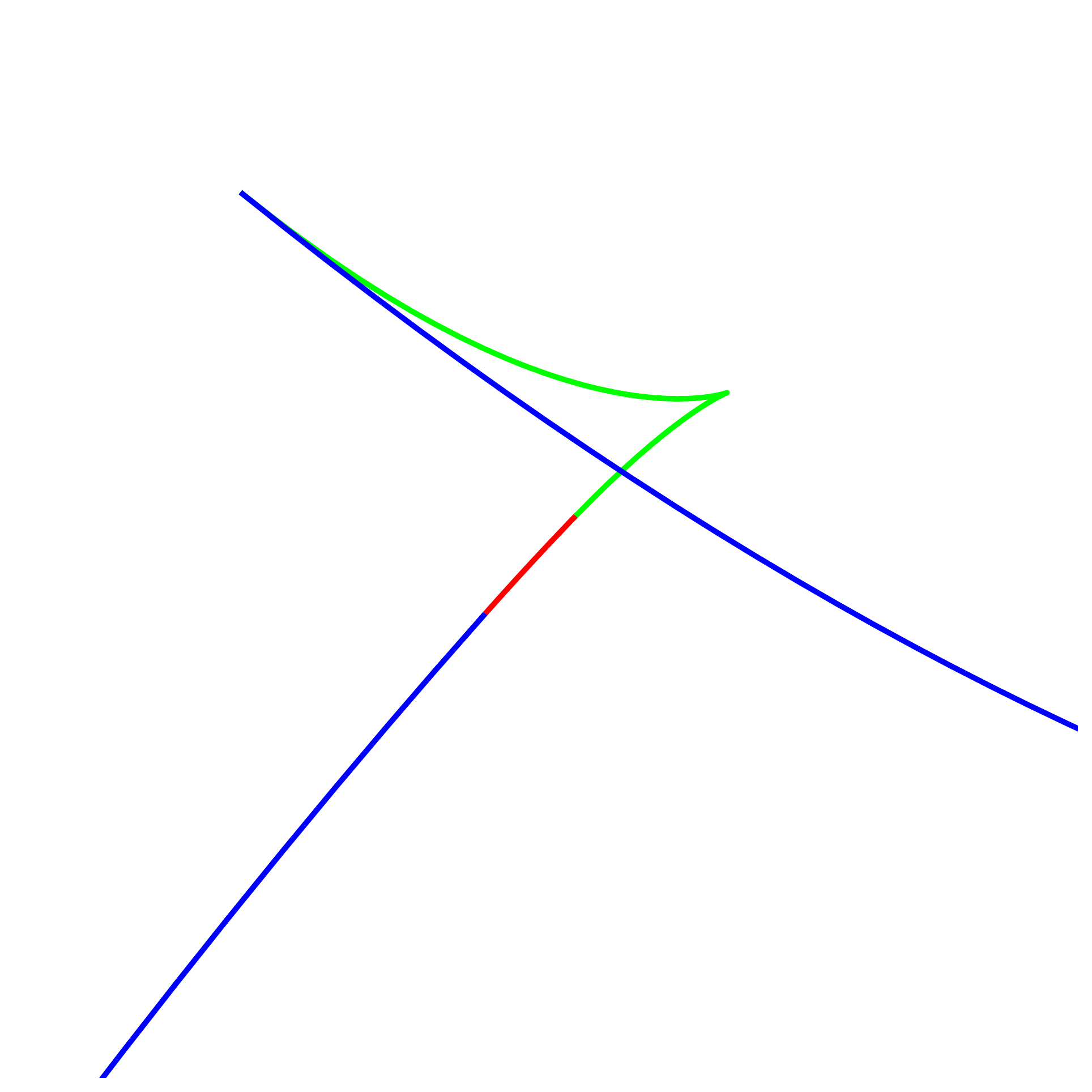}
\caption{$\mathcal{F}^{t-s}(R_0^sv)$ }\label{m2}
\end{flushleft}
\end{minipage}%
\begin{minipage}[t]{0.5\linewidth}
\begin{flushright} \large
\includegraphics[width=2.2in]{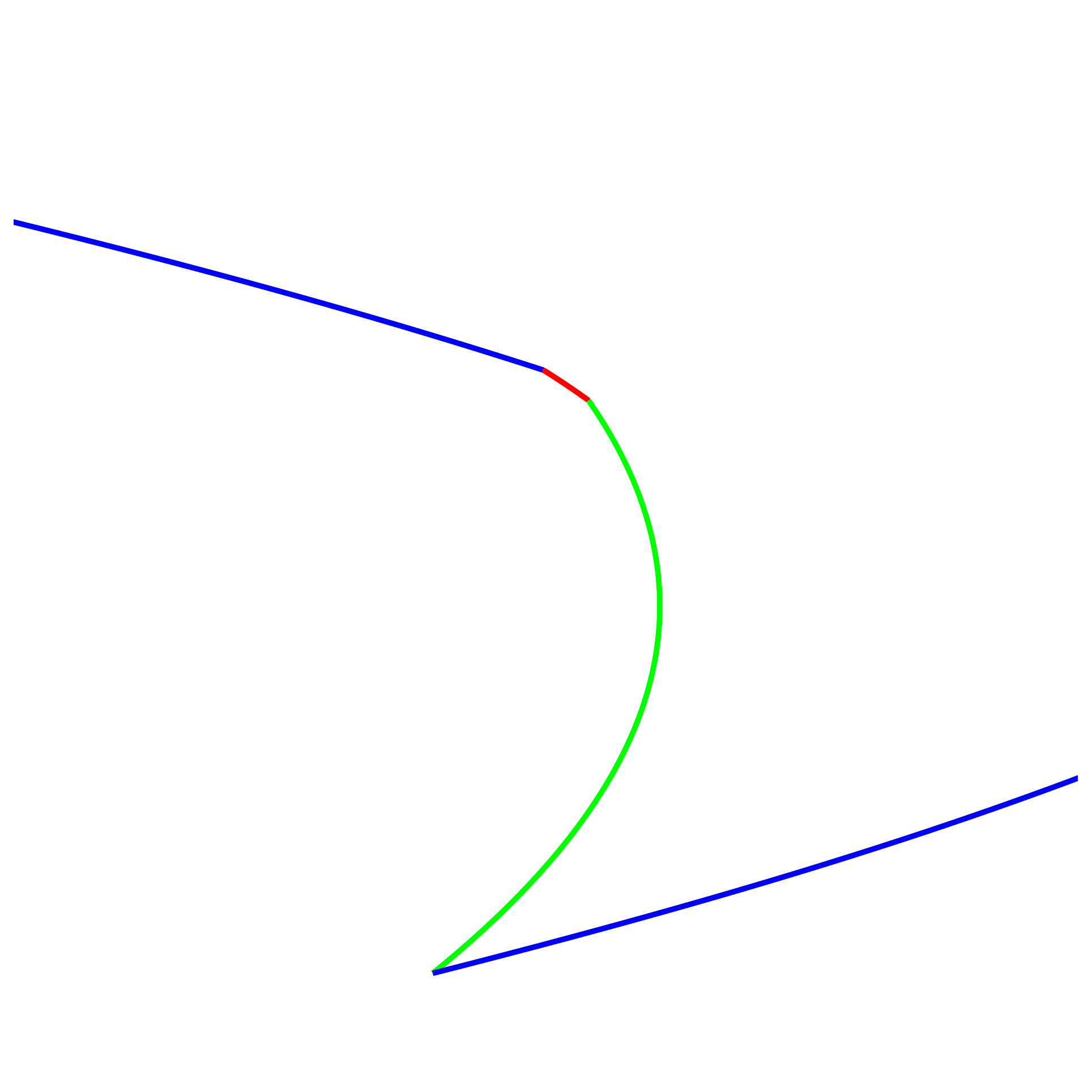}
\caption{$\vp^{t-s}(\p R_0^sv)$}\label{L2}
\end{flushright}

\end{minipage}
\end{figure}

In the wave front $\F1^t(v)$ (ref. Figure \ref{m1}), the two branches in blue are genuine branches $\mathcal{F}^t_{\{x_0>0\}}(v)$ and $\mathcal{F}^t_{\{x_0<0\}}(v)$, and the curve in red is $\F1_{\{0\}}^t(v)$. Being a continuous section, the 1-step minmax
$R_0^tv(x)$ has no choice but being the minimum in the wave front: it contains a piece (in red) generated by the ``vertical segment'' $\p v(0)$ in Clarke's generalized derivatives which describe the singularity of $v$ at $x=0$. This phenomenon reproduces when taking iterated minmax. For 2-step minmax (ref. Figure \ref{m2}): let $x_s$ denote the singularity of the derivative of $R_0^sv$, then $R_s^t\circ R_0^sv$ contains a piece (in green) from $\p R_0^sv(x_s)$.

The presence at each step of these new pieces implies that the minmax does not form a semi-group and that the viscosity solution is not contained in the geometric solution $\mathcal{F}^t(v)$, see Figure \ref{front3}: the part not belonging to the geometric solution comes, through the limiting process,  from the singularities of the derivative which appear at each step of the iteration.
\input{front3.TpX}
\section*{References}
\bibliographystyle{alpha}
\bibliography{bibliothese} 
 
\end{document}